\newcommand{\rrvert}{\vert}
\newcommand{\llvert}{\vert}
\def\cal{\mathcal}
\newcommand{\eqref}[1]{(\ref{#1})}
\newcommand{\iint}{\int\!\!\!\int}
\newcommand{\iiint}{\int\!\!\!\int\!\!\!\int}
\newcommand{\iiiint}{\int\!\!\!\int\!\!\!\int\!\!\!\int}
\newcommand{\idotsint}{\int\cdots\int}
\newtheorem{theorem}{Theorem}[section]
\newtheorem{proposition}[theorem]{Proposition}
\newtheorem{lemma}[theorem]{Lemma}
\newtheorem{corol}[theorem]{Corollary}
\newcommand{\dom}{\operatorname{dom}}
\newcommand{\V}{\operatorname{Var}}
\newcommand{\C}{\operatorname{Cov}}
\newcommand{\Lip}{{\operatorname{Lip}}}
\newcommand{\cI}{\mathcal{I}}
\newcommand{\Curvi}{\Phi_i}
\newcommand{\dist}{\operatorname{dist}}
\def\R{{\mathbb{R}}}
\def\S{{\mathbb{S}}}
\def\E{{\mathbb{E}}}
\def\N{\mathbb{N}}
\def\P{\mathbb{P}}
\def\1{\mathbf{1}}
\def\cC{{\mathcal C}}
\def\cK{{\mathcal K}}
\def\cI{{\mathcal I}}
\def\I{\1}
\newcommand{\BP}{\mathbb{P}}
\newcommand{\BE}{\mathbb{E}}
\newcommand{\Q}{\mathbb{Q}}
\newcommand{\BQ}{\mathbb{Q}}
\newcommand{\bN}{{\mathbf N}}
\newcommand{\cR}{{\mathcal R}}
\newcommand{\BX}{{\mathbf X}}
\newcommand{\cor}{\mathbb{\operatorname{Cor}}}
\begin{document}
\begin{frontmatter}

\title{Second-order properties and central limit theorems for geometric
functionals of~Boolean~models\thanksref{T1}}
\thankstext{T1}{Supported by the German
Research Foundation (DFG) through the research unit ``Geometry and
Physics of Spatial Random Systems'' under the Grant HU 1874/3-1.}
\runtitle{Second-order properties of Boolean models}

\begin{aug}
\author[A]{\fnms{Daniel}~\snm{Hug}\ead[label=e1]{daniel.hug@kit.edu}},
\author[A]{\fnms{G\"unter}~\snm{Last}\corref{}\ead[label=e2]{guenter.last@kit.edu}}
\and
\author[A]{\fnms{Matthias}~\snm{Schulte}\ead[label=e3]{matthias.schulte@kit.edu}}
\runauthor{D. Hug, G. Last and M. Schulte}
\affiliation{Karlsruhe Institute of Technology}
\address[A]{Karlsruhe Institute of Technology\\
Institute of Stochastics\\
76128 Karlsruhe\\
Germany\\
\printead{e1}\\
\phantom{E-mail:\ }\printead*{e2}\\
\phantom{E-mail:\ }\printead*{e3}}
\end{aug}

%
\received{\smonth{8} \syear{2013}}
%
\revised{\smonth{4} \syear{2014}}

%
\begin{abstract}
Let $Z$ be a Boolean model based on a stationary Poisson process $\eta$
of compact, convex particles in Euclidean space ${\mathbb{R}}^d$. Let $W$ denote
a compact,
convex observation
window. For a large class of functionals $\psi$,
formulas for mean values of $\psi(Z\cap W)$ are available in the literature.
The first aim of the present work is to study
the asymptotic covariances of general geometric (additive,
translation invariant and locally bounded) functionals of $Z\cap W$ for
increasing
observation window $W$, including convergence rates.
Our approach is based on the Fock space representation associated with
$\eta$.
For the important special case of intrinsic volumes,
the asymptotic covariance matrix is shown to be
positive definite and can be explicitly expressed in terms of suitable moments
of (local) curvature measures in the isotropic case. The second aim of
the paper is to prove
multivariate central limit theorems including Berry--Esseen bounds.
These are based on a general normal approximation
result obtained by the Malliavin--Stein method.
\end{abstract}

%
\begin{keyword}[class=AMS]
\kwd[Primary ]{60D05}
\kwd[; secondary ]{60F05}
\kwd{60G55}
\kwd{60H07}
\kwd{52A22}
\end{keyword}
\begin{keyword}
\kwd{Boolean model}
\kwd{central limit theorem}
\kwd{covariance matrix}
\kwd{intrinsic volume}
\kwd{additive functional}
\kwd{curvature measure}
\kwd{Fock space representation}
\kwd{Malliavin calculus}
\kwd{Wiener--It\^o chaos expansion}
\kwd{Berry--Esseen bound}
\kwd{integral geometry}
\end{keyword}
\end{frontmatter}

\section{Introduction}\label{secintro}

Let $\eta$ be a \emph{stationary (locally finite) Poisson process} on
the space
$\cK^d$ of \emph{convex bodies} in $\R^d$, that is, on
the space of compact, convex subsets of $\R^d$.
The \emph{Boolean model} associated with $\eta$ is the stationary
random closed set $Z$ defined by
%
\begin{equation}
\label{Bool} Z:=\bigcup_{K\in\eta} K,
\end{equation}
where the Poisson process $\eta$ is identified with its support.
This is a fundamental model of stochastic geometry and continuum percolation
with many applications in materials science and physics
\cite{SKM,Hall88,MeesterRoy96,Molchanov96,SW08}.
The intersection of $Z$ with a compact and convex set $W\subset\R^d$ is
a finite union of compact, convex sets, that is,
an element of the \emph{convex ring} $\cR^d$. It is a common strategy in
stochastic geometry
to extract and explore local information about $Z$ via functionals
of the intersection $Z\cap W$. Perhaps the most
prominent examples of such functionals on $\cR^d$ are the
intrinsic volumes $V_0,\ldots,V_d$,
which contain important geometric information about the sets to which
they are applied.
For instance,
for a set $K\subset\R^d$ from the convex ring, $V_d(K)$ is
the volume, $V_{d-1}(K)$ is half the surface area (if $K$ is the
closure of its interior),
and $V_0(K)$ is the \emph{Euler characteristic} of $K$;
see \cite{SW08}, Section~14.2, for more details. The intrinsic volumes
have several desirable properties.
In particular, they are \emph{additive}, in the
sense that $V_i(K\cup L)=V_i(K)+V_i(L)-V_i(K\cap L)$ for
all $K,L\in\cR^d$ and $i\in\{0,\ldots,d\}$. They are also
translation invariant, and continuous if restricted to the space of
convex bodies.

For a stationary and isotropic Boolean model, Miles \cite{Miles76} and
Davy \cite{Davy76}
obtained explicit formulas expressing the mean values
$\BE V_i(Z\cap W)$ in terms of the intensity measure of $\eta$.
We refer to \cite{SW08}, Section~9.1, for a discussion and more recent
developments related to
this fundamental result.

In the following, we are especially interested in second-order
properties and central limit theorems of the random vector $(V_0(Z\cap
W),\ldots,V_d(Z\cap W))$, for a compact and convex \emph{observation
window} $W$, but in fact we study more general additive functionals
of $Z\cap W$, namely so called \emph{geometric functionals}. A
functional on the convex ring will be called geometric if it is
additive, translation invariant, locally bounded, and measurable (see
Section~\ref{sec:3} for details).

While previous contributions focus on second-order properties and
central limit theorems for volume and surface area, to the best of our
knowledge we present here the first systematic mathematical
investigation of second-order properties and central limit theorems of
all intrinsic volumes and more general geometric functionals of a
stationary Boolean model $Z$. The volume functional was first studied
in \cite{Badd80,Mase82}, while in \cite{Heinrich2005} Berry--Esseen
bounds and large deviation inequalities were established. The surface
area was investigated in \cite{Molchanov95}, and the results were
extended in~\cite{HM99} to more general functionals and point
processes. Integrals over Boolean models are considered in
\cite{BaYuk05,Penrose07}, where the volume is included as a special
case and also the surface area in the latter one. Volume and surface
area of a more general Boolean model based on a Poisson process of
cylinders have been investigated in \cite{HeiSp09,HeiSp13}. From a
geometric point of view, volume and surface area are rather special
functionals of $Z$. They arise as the restriction of deterministic
measures to $Z$ or the boundary of $Z$ and do not involve the
curvature of the (possibly intersecting) grains. A different though
mathematically nonrigorous treatment of second moments of curvature
measures of an isotropic Boolean model with an interesting application
to morphological thermodynamics was presented in \cite{Mecke01}.

Our first main aim in this paper is to use the Fock space
representation of Poisson functionals \cite{LaPe11} to explore the
covariance structure of geometric functionals of $Z\cap W$. Combined
with some new integral geometric inequalities, which are derived by
methods and results from convex and integral geometry, this approach
appears to be perfectly tailored to our purposes. Under the minimal
assumption that the second moments of the intrinsic volumes of the
typical grain are finite, we show that for two geometric functionals
$\psi_1$ and $\psi_2$ the ratio $V_d(W)^{-1}\C(\psi_1(Z\cap
W),\psi_2(Z\cap W))$ tends to some limit $\sigma_{\psi_1,\psi_2}\in
\R$
as the observation window is increased in a proper way. For the case
that the third moments of the intrinsic volumes of the typical grain
are finite, we establish a rate of this convergence in terms of the
inradius of the observation window and show that it is optimal. Via
the Fock space representation the asymptotic covariances can be
expressed as series of second moments. In the important case of
intrinsic volumes of an isotropic Boolean model they can be
represented in terms of curvature based moment measures of the typical
grain. In particular, the covariance structure of the two-dimensional
isotropic Boolean model becomes surprisingly explicit. For a vector of
geometric functionals of the Boolean model, it is shown that the
asymptotic covariance matrix is positive definite under some
additional conditions, which are for example satisfied for the
intrinsic volumes. The second-order analysis is illustrated by
explicit formulas for intrinsic volumes of a Boolean model with
deterministic spherical grains, for which our formulas reduce to
three-dimensional integration of explicitly known integrands.

Our second main aim is to prove univariate and multivariate central
limit theorems for geometric functionals of $Z\cap W$. Under the same
second moment assumptions as for the existence of the asymptotic
covariances, we prove convergence in distribution. We also obtain
rates of convergence under slightly stronger moment assumptions. For
the multivariate central limit theorem, we argue that the rate is
optimal. Following common belief, we guess that our convergence rate
$V_d(W)^{-1/2}$ for the univariate case is optimal as well. In the
univariate case, we do not need to assume that the functional on the
convex ring is translation invariant. In the proofs, we use the
Malliavin--Stein method for Poisson functionals that was recently
developed in \cite{Peccatietal2010,PeccatiZheng2010}. In a sense, this
method builds on the Fock space representation and the closely related
Wiener--It\^o chaos expansion of Poisson functionals. The main
obstacle to the application of these results is the fact that, as a
rule, geometric functionals of $Z$ admit an infinite chaos
expansion. We can resolve this by bounding the kernels of the chaos
expansion by monotone functionals.

In the case of bounded grains, it is likely that
the central limit theorem and the convergence of covariances
can be derived with the theory of $m$-dependent random fields, perhaps
even with rates of convergence. From there, one might proceed to the general
case using a truncation argument as in \cite{Heinrich1993,HeiSp13}.
But such an approach would neither yield much information
on the asymptotic covariance structure nor rates of convergence in the
general case.
Stabilization is another common approach to central limit theorems
in stochastic geometry. We refer here to
\cite{BaYuk05,Penrose07,PenroseYukich2005},
where the first two references deal in particular with volume and
surface area
of
the Boolean model without discussing rates of convergence. It is
unclear whether
the intrinsic volumes (other than volume or surface area)
stabilize for Boolean models with unbounded grains. But even if they do,
the quantitative bounds for the normal approximation derived by stabilization
in \cite{PenroseYukich2005} suggest that the rates would probably be
suboptimal.
Moreover, in
our setting we would need to control boundary effects.

This paper is organized as follows. In the second section, we briefly
summarize some notation and basic facts about the Boolean model and
present a central limit theorem for the intrinsic volumes of the
Boolean model
to be generalized later.
In the third section, we establish the existence of the
asymptotic covariances of a vector of geometric functionals of $Z\cap
W$ and determine the rate of convergence; see Theorem~\ref{thm:variance}. Section~\ref{secpos} is devoted to the positive
definiteness of the asymptotic covariance matrix; see Theorem~\ref{posdef}. In Section~\ref{sintegra}, we focus on intrinsic volumes
and introduce a family of curvature based moment measures of the
typical grain to study infinite series of second moments arising in
the Fock space representation. The main result of this section
(Theorem~\ref{tdeltaij}) is of some independent interest and is
applied in Section~\ref{seciso} to derive formulas for the asymptotic
covariances of the intrinsic volumes of an isotropic Boolean model in
terms of the moment measures mentioned above; see Theorem~\ref{tsigmaij}. Section~\ref{secspherical} presents some explicit
results for a Boolean model with deterministic spherical grains. In
Section~\ref{secclt}, we provide a general result on the normal
approximation of Poisson functionals. We use this result in Section~\ref{secmclt} to establish multivariate and univariate central limit
theorems for geometric functionals of $Z$; see Theorems
\ref{thm:multiGeneral} and~\ref{thm:univariat}.

The extended arXiv-version \cite{HLSarxiv} of this paper contains two
additional appendices with a description of the curvature based moment
measures from Section~\ref{sintegra} in terms of mixed measures of
translative integral geometry and with integral formulas for the exact
(nonasymptotic) covariances of intrinsic volumes, which are rather
explicit in the two-dimensional case.

\section{Boolean models and intrinsic volumes}\label{sec:2}

In this section, we collect a few basic facts about the stationary
Poisson process $\eta$ and the associated Boolean model $Z$ in
Euclidean space $\R^d$ before
stating some of our main results for
the special case of intrinsic volumes. For more details on Boolean
models, we refer the reader to \cite{SKM}, Chapter~3,
\cite{Molchanov96} or \cite{SW08}, Chapter~4, whereas background
material on convex geometry can be found, for example, in \cite{Sch93}
or \cite{SW08}, Chapter~14. All random objects occurring in this
paper are defined on an abstract probability space
$(\Omega,\mathcal{F},\BP)$. A measure on $\cK^d$ is \emph{locally
finite} if it assigns a finite number to $\{K\in\cK^d\dvtx K\cap
C\ne\varnothing\}$ for all $C$ in the space $\cC^d$ of compact subsets
of $\R^d$. We consider the Poisson process $\eta$ as a random element
in the space $\bN$ of all locally finite counting measures on $\cK^d$,
equipped with the smallest $\sigma$-field such that the mappings
$\mu\mapsto\mu(A)$ are measurable for all $A$ in the Borel
$\sigma$-field (with respect to the Hausdorff metric) of $\cK^d$. We
assume that the \emph{intensity measure} $\Lambda:=\BE\eta$ of
$\eta$
is invariant under the shifts $K\mapsto K+x:=\{y+x\dvtx y\in K\}$,
$x\in\R^d$. This is equivalent to the \emph{stationarity} of $\eta$,
that is to the distributional invariance of $\eta$ under all
shifts. We also assume that $\Lambda$ is nontrivial and that
$\Lambda(\{\varnothing\})=0$, which effectively excludes empty grains.
Theorem~4.1.1 in \cite{SW08} implies that
%
\begin{equation}
\label{Lambda} \Lambda(\cdot)=\gamma\iint\I\{K+x\in\cdot\} \,dx \BQ(dK),
\end{equation}
where $\gamma\in(0,\infty)$ is the \emph{intensity} of $\eta$, ``$dx$''
denotes integration with respect to the $d$-dimensional Lebesgue
measure $\lambda_d$, and
$\BQ$ is a probability measure on $\cK^d$ satisfying $\BQ(\{
\varnothing\}
)=0$ as
well as
%
\begin{equation}
\label{int} \int V_d(K+C) \BQ(dK)<\infty,\qquad C\in\cC^d.
\end{equation}
Here, as usual, $K+C:=\{x+y\dvtx x\in K,y\in C\}$ is the \emph{Minkowski sum}
of $K$ and~$C$.
Let $Z_0$ denote a \emph{typical grain}, that is, a random
convex set with distribution $\BQ$. Then \eqref{int} can be written
as
%
\begin{equation}
\label{vi} v_i:=\BE V_i(Z_0)<\infty,\qquad
i=0,\ldots,d.
\end{equation}
This is a direct consequence of the \emph{Steiner formula} (see \cite{SW08},
equation (14.5))
%
\begin{equation}
\label{steiner} V_d\bigl(K+B^d_r\bigr)=\sum
_{i=0}^d \kappa_{d-i}
r^{d-i} V_i(K),\qquad r\ge0, K\in \cK^d,
\end{equation}
where $B^d$ is the closed unit ball centered at the origin, $B_r^d:=\{
rx\dvtx  x\in B^d\}$, and
$\kappa_n$ denotes the volume of the $n$-dimensional unit ball.

The Boolean model is given by $Z\equiv Z(\eta)$, where
\[
Z(\mu):=\bigcup_{K\in\mu} K,\qquad \mu\in\bN,
\]
and $K\in\mu$ means that $\mu(\{K\})>0$. Recall that the mapping
$\mu
\mapsto Z(\mu)$ from $\bN$ to the space
of all closed subsets of $\R^d$ (equipped with the Fell topology)
is Borel measurable (see \cite{SW08}, Theorem~3.6.2).
Without loss of generality, we can assume that $\BQ$ is concentrated on
$\cK^d_o$, where $\cK_o^d$ is
the space of nonempty convex bodies such that the center of the
circumscribed ball is at the origin. Since the center of the
circumscribed ball of a convex body is always contained in the convex
body, we have $0\in K$ for all $K\in\cK^d_o$.

Subsequently, we shall need integrability assumptions such as
%
\begin{equation}
\label{vi2} \BE V_i(Z_0)^2<\infty,\qquad i=0,
\ldots,d,
\end{equation}
or
%
\begin{equation}
\label{vi3} \BE V_i(Z_0)^3<\infty,\qquad i=0,
\ldots,d.
\end{equation}

We next introduce two basic characteristics of the Boolean model
$Z$. The \emph{volume fraction} $p:=\BE V_d(Z\cap[0,1]^d)$
of $Z$ can be expressed in the form
$p=1-e^{-\gamma v_d}$.
The mean \emph{covariogram} of the typical grain is given by
%
\begin{equation}
\label{meancov} C_d(x):=\BE V_d\bigl(Z_0
\cap(Z_0+x)\bigr),\qquad x\in\R^d.
\end{equation}
It follows from \eqref{vi} that $C_d(x)\le v_d<\infty$ and that
$C_d(x)\to0$ as $\|x\|\to\infty$, where $\|x\|$ denotes the Euclidean
norm of $x\in\R^d$.
It is well known (see, e.g., \cite{SKM}, equation (3.18)) that the
\emph{covariance} of $Z$ satisfies
%
\begin{equation}
\label{eqn:covarianceZ} \BP(0\in Z,x\in Z)=p^2+(1-p)^2
\bigl(e^{\gamma C_d(x)}-1 \bigr).
\end{equation}
For $W\in\cK^d$, we define by $C_W(x):=V_d(W\cap(W+x))$, $x\in\R^d$,
the set covariance function of $W$. Combining \eqref{eqn:covarianceZ}
with Fubini's theorem leads to the well-known formula
%
\begin{equation}
\label{eqn:VarianceVolume} \V V_d(Z\cap W) = (1-p)^2 \int
C_W(x) \bigl(e^{\gamma C_d(x)}-1 \bigr) \,dx, \qquad W\in\cK^d.
\end{equation}

Throughout this paper, we investigate the intersection $Z\cap W$ of
the Boolean model $Z$ with an expanding compact convex observation
window $W$. More precisely, we consider sequences of convex bodies
$(W_m)_{m\in\N}$ satisfying $\lim_{m\to\infty} r(W_m)=\infty$, where
$r(W)$ denotes the \emph{inradius} of $W\in\cK^d$. We describe this
situation by writing $r(W)\to\infty$ for short.
Combining our Theorems~\ref{thm:multiGeneral} and~\ref{posdef}
in the special case of intrinsic volumes of $Z\cap W$,
we obtain the following multivariate central
limit theorem.

\begin{theorem}
Assume that \eqref{vi2} is satisfied and let
$\mathcal{V}:=(V_0,\ldots,V_d)$. Then there exists a
$(d+1)$-dimensional centered Gaussian random vector $N$ with a
covariance matrix $\Sigma$ such that
\[
\frac{1}{\sqrt{V_d(W)}} \bigl(\mathcal{V}(Z\cap W)-\BE\mathcal{V}(Z\cap W)\bigr)
\stackrel{d} {\longrightarrow} N \qquad\mbox{as } r(W)\to\infty.
\]
If, additionally, the typical grain $Z_0$ has nonempty interior with
positive probability, the covariance matrix $\Sigma$ is positive
definite.
\end{theorem}

To the best of our knowledge,
this is the first central limit theorem for the intrinsic volumes
of the Boolean model
beyond volume and surface area. In fact,
our Theorem~\ref{thm:multiGeneral} generalizes this result in
several ways.
It concerns a broader class of functionals
and is also quantitative in the sense that it
provides rates of convergence for a suitable distance
under moment conditions slightly stronger than \eqref{vi3}.
Theorem~\ref{thm:univariat} yields presumably optimal rates for the Wasserstein
distance in the univariate case. As already mentioned above, our proofs
rely on the
Malliavin--Stein method for Poisson functionals.
We are not aware of any other approach that might yield
the same rates.
In Section~\ref{seciso},
we will derive formulas
for the asymptotic covariances between the intrinsic volumes
of an isotropic Boolean model.

\section{Covariance structure of geometric functionals}\label{sec:3}

In this paper, we study random variables of the form $\psi(Z\cap W)$,
where $\psi$ is a real-valued measurable function defined
on the \emph{convex ring} $\cR^d$ whose elements are finite unions of
compact, convex sets.
Measurability again refers to the Borel $\sigma$-field generated by
the Fell topology (or, equivalently, by the Hausdorff metric).
We shall assume that $\psi$ is \emph{additive}, that is,
$\psi(\varnothing)=0$ and $\psi(K\cup L)=\psi(K)+\psi(L)-\psi(K\cap L)$
for all $K,L\in\cR^d$. We shall also assume that $\psi$ is
\emph{translation invariant}, that is, $\psi(K+x)=\psi(K)$ for all
$(K,x)\in\cR^d\times\R^d$, and \emph{locally bounded} in the sense
that its absolute value is (uniformly) bounded on compact, convex sets
contained in a translate of the unit cube $Q_1:=[-1/2,1/2]^d$ by a constant
%
\begin{equation}
\label{eqn:locallybounded} M(\psi):=\sup\bigl\{ \bigl|\psi(K)\bigr|\dvtx K\in\cK^d, K
\subset Q_1+x, x\in\R^d\bigr\} <\infty.
\end{equation}
Note that this definition simplifies in the translation-invariant case
since one does not need the translations of $Q_1$.

In the following, we call a functional $\psi\dvtx \mathcal{R}^d\to\R$
\emph{geometric} if it is additive, translation invariant, locally bounded
and measurable. Examples of geometric functionals are
(1)~mixed volumes (see Section~5.1 in \cite{Sch93}) of the form $\psi
(K):=V(K[k],K_1,\ldots,K_{d-k})$,
where $k\in\{0,\ldots,d\}$, the notation $K[k]$ means that the body $K$
is repeated $k$ times, and $K_1,\ldots,K_{d-k}\in\mathcal{R}^d$ are fixed.
Up to normalization, intrinsic volumes are obtained for $K_i=B^d$,
$i=1,\ldots,d-k$;
(2)~integrals of surface area measures (see Sections~4.1 and 4.2 in
\cite{Sch93}) of the form
$
\psi_k(K):=\int_{\S^{d-1}}h(u) S_k(K,du)$,
where $\S^{d-1}$ is the unit sphere in $\R^d$ (the boundary of $B^d$),
$h\dvtx \S^{d-1}\to\R$ is measurable and bounded, and $k\in\{0,\ldots
,d-1\}$;
(3)~the centered support function $\psi(K):=h(K-s(K),u)$ in a fixed
direction $u$, where $u\in\R^d$
and $s(K)$ is the Steiner point of $K$ (see Section~1.7, Section~5.4,
equation~(5.100) in \cite{Sch93} and Lemma~6.1 in \cite{SW08});
(4)~total measures of translative integral geometry (see Section~6.4,
especially page~234, and page~383 in \cite{SW08}). These examples of
geometric functionals are substantially more general than the intrinsic
volumes. For instance, whereas intrinsic volumes are always rotation
invariant, no such invariance is built into these four classes of
examples in general. Moreover, it should be observed that linear
combinations of mixed volumes are dense in the (normed) vector space of
translation invariant, continuous valuations on convex bodies in $\R^n$
(see Section~6.5, page~406, in \cite{Sch93}).

Our main result of this section deals with the asymptotic behavior of the
covariance between two geometric functionals of $Z\cap W$
for expanding convex observation window $W$.
With a measurable functional $\psi\dvtx \cR^d\rightarrow\R$, we associate
another measurable function $\psi^*\dvtx \cK^d\to\R$ by
%
\begin{equation}
\label{psi*} \psi^*(K):=\BE\psi(Z\cap K)-\psi(K), \qquad K\in\cK^d,
\end{equation}
if $\BE|\psi(Z\cap K)|<\infty$ for all $K\in\cK^d$. Under assumption
\eqref{int}, it follows from~\eqref{smfinite} below that $\psi^*$ is
well defined for a geometric functional $\psi$. For $C\in\cC^d$ let
$N_C$ denote the number of all particles in $\eta$ intersecting $C$.

\begin{theorem}\label{thm:variance}
Let $\psi_1$ and $\psi_2$ be geometric functionals. If \eqref{vi2} is
satisfied, then the limit
%
\begin{equation}
\label{eqn:limitCovariance} \sigma_{\psi_1,\psi_2}= \lim_{r(W)\rightarrow\infty}
\frac{\C(\psi_1(Z\cap W),\psi
_2(Z\cap W))}{V_d(W)}
\end{equation}
exists, is finite, and is given by
%
\begin{eqnarray}
\label{eqn:sigma} \sigma_{\psi_1,\psi_2}&=&\gamma\sum_{n=1}^\infty
\frac{1}{n!} \iint\psi^*_1(K_1\cap K_2
\cap\cdots\cap K_n) \psi^*_2(K_1\cap
K_2\cap\cdots\cap K_n)
\nonumber
\\[-8pt]
\\[-8pt]
\nonumber
&&\hspace*{55pt}{}\times
\Lambda^{n-1}\bigl(d(K_2,\ldots,K_n)\bigr)
\Q(dK_1).
\end{eqnarray}
Assume that \eqref{vi3} holds and define
\begin{eqnarray*}
c_{\Lambda}&:=& 2^{d+2} \cdot4^{2^d}\cdot25^{2d}
d! \bigl(\BE2^{N_{Q_1}} +1 \bigr)^2 \\
&&{}\times\exp \Biggl(2^{2^d}
\cdot25^d (d+1)! \gamma\sum_{i=0}^d
\BE V_i(Z_0) \Biggr) \gamma\BE \Biggl(\sum
_{i=0}^d V_i(Z_0)
\Biggr)^3.
\end{eqnarray*}
Then, for $W\in\cK^d$ with $r(W)\geq1$,
%
\begin{equation}
\label{eqn:AsymptoticCovariances} \biggl\llvert \frac{\C(\psi_1(Z\cap W),\psi_2(Z\cap W))}{V_d(W)}-\sigma _{\psi
_1,\psi_2}\biggr
\rrvert \le\frac{c_{\Lambda} M(\psi_1)M(\psi_2)}{r(W)}.
\end{equation}
\end{theorem}

We start with some preparations. Our main probabilistic tool is
the following Fock space representation of Poisson functionals, derived
in \cite{LaPe11}.
For any measurable $f\dvtx \bN\rightarrow\R$ and $K\in\cK^d$, the function
$D_Kf\dvtx \bN\to\R$ is defined by
%
\begin{equation}
\label{addone} D_{K}f(\mu):=f(\mu+\delta_{K})-f(\mu),\qquad  \mu
\in\bN,
\end{equation}
where $\delta_{K}$ is the Dirac measure located at $K$.
The \emph{difference operator} $D_K$ and its iterations
play a central role in the analysis of Poisson processes.
For $n\ge2$ and $(K_1,\ldots,K_n)\in(\cK^d)^n$
we define a function
$D^{n}_{K_1,\ldots,K_n}f\dvtx \bN\rightarrow\R$ inductively by
\[
D^{n}_{K_1,\ldots,K_{n}}f:=D^1_{K_{1}}D^{n-1}_{K_2,\ldots,K_{n}}f,
\]
where $D^1:=D$.
Note that
\[
D^n_{K_1,\ldots,K_n}f(\mu)= \sum_{J \subset\{1,2,\ldots,n\}}(-1)^{n-|J|}
f \biggl(\mu+\sum_{j\in J}\delta_{K_j}
\biggr),
\]
where $|J|$ denotes the number of elements of $J$. This
shows that the operator $D^n_{K_1,\ldots,K_n}$
is symmetric in $K_1,\ldots,K_n$, and that
$(\mu,K_1,\ldots,K_n)\mapsto D^n_{K_1,\ldots,K_n}f(\mu)$
is measurable.
From \cite{LaPe11}, Theorem~1.1, we obtain for any
measurable $f,g\dvtx \bN\rightarrow\R$ satisfying $\BE f(\eta)^2<\infty$
and $\BE g(\eta)^2<\infty$ that
%
\begin{eqnarray}
\label{2.8}&& \C\bigl(f(\eta),g(\eta)\bigr)
\nonumber
\\[-8pt]
\\[-8pt]
\nonumber
&&\qquad= \sum^\infty_{n=1}
\frac{1}{n!}\int\BE D^n_{K_1,\ldots,K_n} f(\eta) \BE
D^n_{K_1,\ldots,K_n} g(\eta) \Lambda^n
\bigl(d(K_1,\ldots,K_n)\bigr).
\end{eqnarray}

For given $W\in\cK^d$ and a functional $\psi\dvtx \cR^d\rightarrow\R$
we shall apply \eqref{2.8} to functions $f_{\psi,W}\dvtx \bN\rightarrow
\R$
defined by $f_{\psi,W}(\mu):=\psi(Z(\mu)\cap W)$. Induction yields the
following lemma.

\begin{lemma}\label{lem:kernels}
Let $\psi\dvtx \cR^d\rightarrow\R$ be additive. Then, for $n\in\N$,
$K_1,\ldots,K_n\in\cK^d$, and $\mu\in\bN$,
\begin{eqnarray*}
&&D^n_{K_1,\ldots,K_n}f_{\psi,W}(\mu)\\
&&\qquad =(-1)^n\bigl(
\psi\bigl(Z(\mu)\cap K_1\cap\cdots\cap K_n\cap W\bigr)-
\psi(K_1\cap \cdots \cap K_n\cap W)\bigr).
\end{eqnarray*}
\end{lemma}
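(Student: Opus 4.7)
The plan is to prove the formula by induction on $n$, using the definition of $D^n$ as an iterate of the single difference operator. The underlying engine is the additivity of $\psi$ together with the identity $Z(\mu+\delta_K)=Z(\mu)\cup K$, which turns the ``particle insertion'' encoded by $D_K$ into an intersection with $K$ modulo a constant correction.

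\textbf{Base case ($n=1$).} I would simply unfold the definition:
\begin{align*}
D_{K_1}f_{\psi,W}(\mu)
&=\psi\bigl((Z(\mu)\cup K_1)\cap W\bigr)-\psi(Z(\mu)\cap W)\\
&=\psi\bigl((Z(\mu)\cap W)\cup (K_1\cap W)\bigr)-\psi(Z(\mu)\cap W).
\end{align*}
Applying additivity to $\psi(A\cup B)=\psi(A)+\psi(B)-\psi(A\cap B)$ with $A=Z(\mu)\cap W$ and $B=K_1\cap W$ cancels the $\psi(Z(\mu)\cap W)$ term, leaving
$$D_{K_1}f_{\psi,W}(\mu)=-\bigl[\psi(Z(\mu)\cap K_1\cap W)-\psi(K_1\cap W)\bigr],$$
which is the claimed formula for $n=1$.

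\textbf{Induction step.} Suppose the formula holds for $n-1$. Set $L:=K_2\cap\cdots\cap K_n\cap W$, which lies in $\cK^d$ since it is an intersection of convex bodies (possibly empty, in which case $\psi$ vanishes on it by the convention $\psi(\emptyset)=0$). The induction hypothesis reads
$$D^{n-1}_{K_2,\dots,K_n}f_{\psi,W}(\mu)=(-1)^{n-1}\bigl[\psi(Z(\mu)\cap L)-\psi(L)\bigr]=(-1)^{n-1}\bigl[f_{\psi,L}(\mu)-\psi(L)\bigr].$$
Since $\psi(L)$ does not depend on $\mu$, its $D_{K_1}$-difference is zero, and since $D_{K_1}$ is linear I reduce to applying it to $f_{\psi,L}$. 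But this is precisely the base case with $W$ replaced by $L$, giving
$$D_{K_1}f_{\psi,L}(\mu)=-\bigl[\psi(Z(\mu)\cap K_1\cap L)-\psi(K_1\cap L)\bigr].$$
Multiplying the two sign factors yields $(-1)^{n-1}\cdot(-1)=(-1)^n$, and substituting $L$ gives the desired identity.

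\textbf{Where the care is needed.} There is no deep obstacle, only bookkeeping. The only thing to double-check is that the base case argument is really legal inside the convex ring: $(Z(\mu)\cap W)$ is a locally finite union of convex bodies and $K_1\cap W\in\cK^d$, so intersecting $W$ with $Z(\mu)\cup K_1$ produces an element of $\cR^d$, and additivity of $\psi$ on $\cR^d$ applies. Using the inductive definition $D^n_{K_1,\dots,K_n}=D^1_{K_1}D^{n-1}_{K_2,\dots,K_n}$ (rather than the explicit sum-over-subsets formula) keeps the argument short; otherwise one would have to invoke Möbius inversion on $2^{[n]}$ after expanding $\psi$ of the union $A\cup\bigcup_{j\in J}B_j$ by inclusion-exclusion, which works but is longer.
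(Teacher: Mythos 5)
Your proof is correct and proceeds exactly as the paper intends: the paper's proof of Lemma 3.2 is just the line ``This easily follows by induction,'' and your induction on $n$ — using $Z(\mu+\delta_{K_1})=Z(\mu)\cup K_1$ together with additivity to reduce the step to the base case with $W$ replaced by $L=K_2\cap\cdots\cap K_n\cap W$ — supplies the intended details.
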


\begin{lemma}\label{lem:boundintrinsic}
Let $\psi$ be an additive, locally bounded and measurable functional
and assume that \eqref{int} is satisfied.
Then, for all $n\in\N$, $K_1,\ldots,K_n\in\mathcal{K}^d$, and
$W\in\cK^d$,
%
\begin{equation}
\label{Tn} \BE D^n_{K_1,\ldots,K_n} f_{\psi,W}(\eta)
=(-1)^n\psi^*(K_1\cap\cdots\cap K_n\cap W)
\end{equation}
and
%
\begin{equation}
\label{est3a} \bigl|\E D^n_{K_1,\ldots,K_n}f_{\psi,W}(\eta)\bigr| \le
\beta(\psi) \sum_{i=0}^d
V_i(K_1\cap\cdots\cap K_n\cap W)
\end{equation}
with $\beta(\psi)=2^{2^d} \cdot5^d M(\psi)  ( \BE
2^{N_{Q_1}}+1
)$. Moreover, for any $A\in\cK^d$,
%
\begin{equation}
\label{smfinite} \E\psi(Z\cap A)^2<\infty.
\end{equation}
\end{lemma}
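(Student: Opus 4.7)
I would prove the three claims in an interlocking fashion, propagating a purely geometric estimate through the Poisson structure. The cornerstone is: for every $K\in\cK^d$, $|\psi(K)|\le \beta_0(\psi)\sum_{i=0}^d V_i(K)$ with $\beta_0(\psi)$ depending only on $M(\psi)$ and the dimension. I would cover $K$ by the axis-parallel unit cubes $Q_1+z$, $z\in\mathbb{Z}^d$, meeting $K$, and decompose $\psi(K)$ by additivity and inclusion-exclusion. Because unit cubes with non-empty common intersection all lie in some block $\{a,a+1\}^d$, the number of surviving inclusion-exclusion terms is at most a dimensional constant times the number $N(K)$ of cubes meeting $K$; each surviving summand is $\psi$ evaluated on a convex body contained in a translate of $Q_1$ and is therefore bounded in absolute value by $M(\psi)$. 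Steiner's formula applied to $K+\sqrt d\,B^d$ finally controls $N(K)$ by $c_d\sum_i V_i(K)$.

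With this bound in hand, \eqref{Tn} follows by taking expectations in the identity of Lemma \ref{lem:kernels}, provided $\E|\psi(Z\cap L)|<\infty$ for $L:=K_1\cap\cdots\cap K_n\cap W\in\cK^d$. For this I would write $Z\cap L$ as the union of the a.s.\ finitely many $K\cap L$ with $K\in\eta$ meeting $L$, expand by inclusion-exclusion, and apply the multivariate Mecke formula to obtain $\E|\psi(Z\cap L)|\le\sum_{n\ge 1}\frac{1}{n!}\int|\psi(L\cap K_1\cap\cdots\cap K_n)|\,\Lambda^n(d(K_1,\ldots,K_n))$. Inserting the geometric estimate reduces the matter to showing $\sum_{n\ge 1}\frac{1}{n!}\int V_i(L\cap K_1\cap\cdots\cap K_n)\,\Lambda^n(d(K_1,\ldots,K_n))\le c_i\sum_j V_j(L)$ for each $i$. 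I would prove this by iterating the translative integral formula for $\int V_i(M\cap(K+x))\,dx$ (bilinear in intrinsic volumes of $M$ and $K$) against the product form $\Lambda=\gamma\iint\I\{K+x\in\cdot\}\,dx\,\BQ(dK)$: the $n$-th integral becomes linear in $V_0(L),\ldots,V_d(L)$ with coefficients polynomial of degree $n$ in $v_0,\ldots,v_d$, and the factor $1/n!$ then yields absolute convergence under \eqref{int}. Combined with the geometric bound on $|\psi(L)|$, this gives \eqref{est3a} and legitimises \eqref{Tn}.

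For \eqref{smfinite} I would carry out the same expansion to second order. Squaring the inclusion-exclusion representation of $\psi(Z\cap A)$ and applying the bivariate Mecke formula expresses $\E\psi(Z\cap A)^2$ as a double series of integrals of products $\psi(A\cap K'_1\cap\cdots\cap K'_m)\,\psi(A\cap K''_1\cap\cdots\cap K''_n)$ against $\Lambda^{m+n}$, together with lower-order corrections arising when some $K'_i$ coincides with some $K''_j$. Substituting the geometric bound and applying Cauchy--Schwarz reduces the problem to iterated translative integrals of products of intrinsic volumes, whose absolute convergence follows by the same kinematic iteration as above, now requiring the second-moment assumption \eqref{vi2} to control the $v_iv_j$-type factors generated by the coincidences.

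The main obstacle throughout is the Mecke-series convergence in the second and third paragraphs: while each translative integral formula is classical, organising the iteration so that the polynomial growth at level $n$ is absorbed by $1/n!$, while only the stated moment hypotheses (\eqref{int} for \eqref{est3a}, \eqref{vi2} for \eqref{smfinite}) are invoked, is the delicate point of the argument.
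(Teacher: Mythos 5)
Your proof of \eqref{Tn} and \eqref{est3a} takes a genuinely different route from the paper. The paper does not unfold $\psi(Z\cap A)$ into a Mecke series at all; instead, after the cube-decomposition step (which you also use), it observes that for each surviving family $\cI$ of cubes the only grains entering $\psi(Z\cap A\cap\bigcap_{Q\in\cI}Q)$ are those hitting a fixed cube $Q_{\cI}\in\cI$, bounds the resulting inclusion-exclusion over particles crudely by $2^{N_{Q_{\cI}}}M(\psi)$ with $N_{Q_{\cI}}$ the Poisson number of hitting grains, and then uses the exponential moment $\E z^{N_C}=\exp((z-1)\gamma\E V_d(Z_0+C^*))<\infty$ together with Steiner's formula. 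Your alternative --- expand $Z\cap L$ over distinct $n$-tuples, apply the multivariate Mecke formula, and control the resulting absolutely convergent series with the kinematic iteration of Lemma~\ref{lem:kinematic} --- is correct and self-contained; it effectively reproves a special case of the covariance machinery that the paper later develops, whereas the paper's localisation argument keeps the bound purely first-order in $\eta$.

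However, for \eqref{smfinite} your argument has a genuine gap. You explicitly invoke the second-moment hypothesis \eqref{vi2} to ``control the $v_iv_j$-type factors generated by the coincidences'' in the squared Mecke expansion, but the lemma assumes only \eqref{int}, i.e.\ finiteness of the \emph{first} moments $v_i=\E V_i(Z_0)$. As written, your proof establishes a weaker result than claimed. The paper avoids this issue entirely: it bounds pathwise
\[
|\psi(Z\cap A)|\le(2^{N_A}-1)\,M_A(\psi),\qquad
M_A(\psi)\le 2^{|\mathcal{Q}(A)|}M(\psi),
\]
where $N_A$ is the Poisson number of grains hitting $A$, and concludes
$\E\psi(Z\cap A)^2\le\E[4^{N_A}]\,4^{|\mathcal{Q}(A)|}M(\psi)^2<\infty$
from the exponential moment of the Poisson variable $N_A$, which is finite under \eqref{int} alone. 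If you insist on the Mecke-series route, the coincidence terms can in fact be tamed without \eqref{vi2} by replacing Cauchy--Schwarz with the monotonicity bound $V_i(A\cap K)\le V_i(A)$ before integrating out the repeated grain, so that only first moments of $Z_0$ ever appear; but that refinement is not in your proposal, and without it the claim \eqref{smfinite} is not proved under the stated hypotheses.
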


\begin{pf}
We start by proving that there is a constant $c_1>0$ such that
%
\begin{equation}
\label{2.1a} \E\bigl|\psi(Z\cap A)\bigr| \leq c_1 M(\psi) \sum
_{i=0}^d V_i(A)
\end{equation}
for $A\in\cK^d$. Since \eqref{2.1a} is obviously true for
$A=\varnothing$,
we assume $A\neq\varnothing$ in the following.
We define ${\mathcal Q}(A):=\{Q_1+z\dvtx z\in\mathbb{Z}^d, (Q_1+z)\cap
A\neq\varnothing\}$.
By the inclusion--exclusion formula for additive functionals (see,
e.g., \cite{Sch93}, (6.2), page~330),
we have
\[
\bigl|\psi(Z\cap A)\bigr| = \biggl|\psi \biggl(Z\cap A\cap\bigcup_{Q\in
{\mathcal
Q}(A)}
Q \biggr) \biggr| \le\sum_{\varnothing\neq\cI\subset{\mathcal Q}(A)}
\biggl|\psi \biggl(Z\cap A\cap
\bigcap_{Q\in\cI} Q \biggr)\biggr |.
\]
For each nonempty\vspace*{1pt} subset $\cI\subset{\mathcal Q}(A)$, we fix some cube
$Q_{\cI}\in\cI$. Let $Z_1,\ldots,Z_{N_{Q_{\cI}}}$ denote the
particles hitting $Q_{\cI}$. Then, for $\varnothing\neq J \subset\{
1,\ldots,N_{Q_{\cI}}\}$,
assumption \eqref{eqn:locallybounded} yields that
\[
\biggl|\psi \biggl(\bigcap_{j\in J}Z_j\cap A
\cap\bigcap_{Q\in\cI}Q \biggr) \biggr|\le M(\psi).
\]
By the inclusion--exclusion formula and taking into account that $\psi
(\varnothing)=0$, we get
%
\begin{eqnarray}
\label{ref1} \bigl|\psi(Z\cap A)\bigr| &\le&
\sum_{\varnothing\neq\cI\subset{\mathcal Q}(A)} \Biggl|\psi
\Biggl(\bigcup_{j=1}^{N_{Q_{\cI}}}Z_j
\cap A\cap\bigcap_{Q\in\cI} Q \Biggr) \Biggr|
\nonumber
\\
&\le&\sum_{\varnothing\neq\cI\subset{\mathcal Q}(A)}\sum_{\varnothing
\neq
J\subset\{1,\ldots,N_{Q_{\cI}}\}}
\biggl|\psi \biggl(\bigcap_{j\in J}Z_j\cap A\cap
\bigcap_{Q\in\cI}Q \biggr) \biggr|\nonumber
\\
&\le&\sum_{\varnothing\neq\cI\subset{\mathcal Q}(A)} \mathbf {1} \biggl\{ \bigcap
_{Q\in\mathcal{I}}Q\neq\varnothing \biggr\} 2^{N_{Q_{\cI}}} M(
\psi).
\end{eqnarray}
The cubes in ${\mathcal Q}(A)$ form a grid, hence
\[
\biggl| \biggl\{\varnothing\neq\cI\subset{\mathcal Q}(A)\dvtx \bigcap
_{Q\in\cI
}Q\neq\varnothing \biggr\} \biggr| \le c_2 \bigl|{
\mathcal Q}(A)\bigr|
\]
with $c_2:=2^{2^d}$. By stationarity of $\eta$, we have $ \BE
2^{N_{Q_{\cI}}}= \BE2^{N_{Q_1}}$, and thus
%
\begin{equation}
\label{est1} \BE\bigl| \psi(Z\cap A)\bigr| \leq c_3 M(\psi)
\bigl|{\mathcal Q}(A)\bigr|
\end{equation}
with $c_3:=c_2\BE2^{N_{Q_1}}$. Here, we have used that
$
\BE z^{N_C}=\exp ((z-1)\gamma\BE V_d(Z_0+C^*) )<\infty
$
holds for $C\in\cK^d$ and $z\ge0$,
where $C^*:=\{-x\dvtx x\in C\}$ is the reflection of $C$ in the origin.

Since $|{\mathcal Q}(A)| \leq V_d(A+ \sqrt{d}B^d)$, Steiner's formula
\eqref{steiner} yields
%
\begin{equation}
\label{est0} \bigl|{\mathcal Q}(A)\bigr| \leq\sum_{i=0}^d
\kappa_{d-i} d^{(d-i)/2} V_i(A)\leq c_4
\sum_{i=0}^d V_i(A)
\end{equation}
with $c_4:=5^d$. In the last step, we used that $\kappa
_{d-i}d^{(d-i)/2}\leq5^d$, $i\in\{0,\ldots,d\}$, which can be deduced
by elementary calculus from the representation of $\kappa_{d-i}$ in
terms of the Gamma function and from Stirling's formula. Now \eqref
{est1} together with \eqref{est0} yields \eqref{2.1a} with
$c_1:=c_3c_4$. Combining \eqref{2.1a} with Lemma~\ref{lem:kernels} and
the definition of $\psi^*$ in \eqref{psi*} shows \eqref{Tn}. For
$A\in
\cK^d$, we can argue as in the derivation of \eqref{ref1}, and then
use \eqref{est0}, to get
%
\begin{equation}
\label{est0a} \bigl|\psi(A)\bigr| \le c_2 \bigl|\mathcal{Q}(A)\bigr|M(\psi)\le
c_{2}c_{4} M(\psi)\sum_{i=0}^d
V_i(A).
\end{equation}
Combining \eqref{2.1a} and \eqref{est0a} for $A=K_1\cap\cdots\cap
K_n\cap W$ with Lemma~\ref{lem:kernels} yields~\eqref{est3a}.

In order to show that $\psi(Z\cap A)$ is square integrable, we first
derive an upper bound for
%
\begin{equation}
\label{Mloc} M_A(\psi):=\sup\bigl\{\bigl|\psi(L)\bigr|\dvtx L\in
\mathcal{K}^d,L\subset A\bigr\}.
\end{equation}
Let $L\in\mathcal{K}^d$ with $L\subset A$. Then, using the
inclusion--exclusion formula for additive functionals
and \eqref{eqn:locallybounded}, we get
\[
\bigl|\psi(L)\bigr|= \biggl|\psi \biggl(L\cap\bigcup_{Q\in\mathcal{Q}(A)} Q
\biggr)\biggr |\le2^{|\mathcal{Q}(A)|}M(\psi),
\]
and hence
$
M_A(\psi)\le2^{|\mathcal{Q}(A)|}M(\psi)$.
Again by the inclusion--exclusion formula, we have
%
\begin{equation}
\label{zitier} \bigl|\psi(Z\cap A)\bigr|\le\bigl(2^{N_A}-1\bigr)M_A(
\psi)\le2^{N_A}M_A(\psi),
\end{equation}
and, therefore,
\[
\E\psi(Z\cap A)^2\le\E \bigl[4^{N_A} \bigr]
4^{|\mathcal
{Q}(A)|}M(\psi )^2<\infty,
\]
which completes the proof.
\end{pf}

\begin{lemma}\label{lem:translativeIntegral}
Define $\beta_1:=2^{2^d} \cdot25^d d!$. Then, for all $k\in\{
0,\ldots
,d\}$ and $W,K\in\cK^d$,
%
\begin{equation}
\int V_k\bigl(W\cap(K+x)\bigr) \,d x  \leq\beta_1 \sum
_{i=0}^d V_i(W) \sum
_{r=k}^d V_r(K) \label{tra1}.
\end{equation}
\end{lemma}

\begin{pf}
Using the same notation as in the proof of Lemma~\ref{lem:boundintrinsic} and the fact that $V_k$
is increasing and translation invariant, we obtain that
\begin{eqnarray*}
\int V_k\bigl(W\cap(K+x)\bigr) \,d x & \leq&\sum
_{\varnothing\neq{\cal I}\subset
{\cal Q}(W)} \int V_k \biggl(W\cap\bigcap
_{Q\in{\cal I}} Q \cap (K+x) \biggr) \,dx
\\
& \leq&\sum_{\varnothing\neq{\cal I}\subset{\cal Q}(W)} \I \biggl\{ \bigcap
_{Q\in{\cal I}} Q\neq\varnothing \biggr\} \int V_k\bigl(K
\cap(Q_1 + x)\bigr) \,dx.
\end{eqnarray*}
Let $B'$ denote a ball of radius $\sqrt{d}/2$. Then the kinematic
formula (see \cite{SW08}, Theorem~5.1.3, and note that
$c_{j,d}^{k,d-k+j}\leq1$) and the rotation invariance of $B'$ yield that
\[
\int V_k\bigl(K\cap(Q_1 + x)\bigr) \,dx \leq\int
V_k\bigl(K\cap\bigl(B'+x\bigr)\bigr) \,dx \le
c_5 \sum_{r=k}^d
V_r(K)
\]
with $c_5:=5^d d!$. On the other hand, it was shown in the proof of
Lemma~\ref{lem:boundintrinsic} that
\[
\biggl| \biggl\{\varnothing\neq{\cal I} \subset{\cal Q}(W)\dvtx \bigcap
_{Q\in
{\cal I}} Q \neq\varnothing \biggr\} \biggr|\leq c_2
c_4 \sum_{i=0}^d
V_i(W).
\]
Combining the preceding inequalities, we obtain the assertion of the lemma.
\end{pf}

\begin{lemma}\label{lem:kinematic}
For $A\in\cK^d$ and $n\in\N$,
\[
\int\sum_{k=0}^d V_k(A\cap
K_1\cap\cdots\cap K_n) \Lambda ^n
\bigl(d(K_1,\ldots,K_n)\bigr) \leq\alpha^n
\sum_{k=0}^d V_k(A),
\]
where $\alpha=\gamma(d+1)\beta_1 \sum_{i=0}^d \BE V_i(Z_0)$ with
$\beta_1$ as in Lemma~\ref{lem:translativeIntegral}.
\end{lemma}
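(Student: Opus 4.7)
The plan is to prove the inequality by induction on $n$, with the base case $n=1$ being essentially a direct integration of Lemma~\ref{lem:translativeIntegral} against the intensity measure $\Lambda$, and the induction step being a routine application of Fubini's theorem.

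For the base case $n=1$, I would start from the explicit form \eqref{Lambda} of the intensity measure to write
\begin{align*}
\int \sum_{k=0}^d V_k(A\cap K_1)\,\Lambda(dK_1) = \gamma \iint \sum_{k=0}^d V_k(A\cap (K+x))\,dx\,\Q(dK).
\end{align*}
Applying Lemma \ref{lem:translativeIntegral} to each summand on the right and then summing, I would use the elementary identity $\sum_{k=0}^d\sum_{r=k}^d V_r(K) = \sum_{r=0}^d (r+1) V_r(K) \le (d+1)\sum_{r=0}^d V_r(K)$ to collapse the double sum. Taking the expectation against $\Q$ then yields the factor $\gamma(d+1)\beta_1\sum_{i=0}^d \E V_i(Z_0)=\alpha$ in front of $\sum_{i=0}^d V_i(A)$, which is exactly the claim for $n=1$.

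For the induction step, assuming the inequality for $n$, I would apply Fubini's theorem to integrate first with respect to $K_{n+1}$:
\begin{align*}
&\int \sum_{k=0}^d V_k(A\cap K_1\cap\hdots\cap K_{n+1})\,\Lambda^{n+1}(d(K_1,\hdots,K_{n+1}))\\
&\quad = \int \bigg[\int \sum_{k=0}^d V_k\bigl((A\cap K_1\cap\hdots\cap K_n)\cap K_{n+1}\bigr)\,\Lambda(dK_{n+1})\bigg]\,\Lambda^n(d(K_1,\hdots,K_n)).
\end{align*}
The inner integral is precisely of the form handled in the base case, now with $A$ replaced by $A\cap K_1\cap\hdots\cap K_n$ (which is again a convex body, or empty with all intrinsic volumes zero), so it is bounded by $\alpha\sum_{i=0}^d V_i(A\cap K_1\cap\hdots\cap K_n)$. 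Invoking the induction hypothesis on the remaining outer integral produces the bound $\alpha\cdot\alpha^n\sum_{i=0}^d V_i(A)=\alpha^{n+1}\sum_{i=0}^d V_i(A)$, completing the induction.

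There is no real obstacle here; the main point to get right is the bookkeeping of the $(d+1)$ factor coming from interchanging the two sums over intrinsic volumes (which requires that Lemma \ref{lem:translativeIntegral} is summed over $k$ only after the translative integration has been carried out), and the measurability/convexity issue that $A\cap K_1\cap\hdots\cap K_n$ may be empty, which is harmless since $V_k(\emptyset)=0$. Finiteness of $\alpha$ is guaranteed by the moment assumption \eqref{vi}.
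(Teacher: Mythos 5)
Your proposal is correct and follows essentially the same route as the paper: the paper likewise peels off one variable $K_n$ via \eqref{Lambda} and Lemma~\ref{lem:translativeIntegral}, uses the bound $\sum_{k=0}^d\sum_{r=k}^d V_r(K_n)\le (d+1)\sum_{r=0}^d V_r(K_n)$ to produce the factor $\alpha$, and then ``iterates this step $(n-1)$ more times'' — which is exactly the induction you have written out explicitly.
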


\begin{pf}
In the following calculation and also later, we use the convention
$\int c\, d\Lambda^0:=c$. We apply \eqref{Lambda} and \eqref{tra1} to get
\begin{eqnarray*}
&&\int\sum_{k=0}^d V_k(A\cap
K_1\cap\cdots\cap K_n) \Lambda ^n
\bigl(d(K_1,\ldots,K_n)\bigr)
\\
&&\qquad=\sum_{k=0}^d \gamma\iiint
V_k\bigl(A\cap K_1\cap\cdots\cap K_{n-1}\cap
(K_n+y)\bigr) \,dy \BQ(dK_n)\\
&&\hspace*{81pt}{}\times \Lambda^{n-1}
\bigl(d(K_1,\ldots,K_{n-1})\bigr)
\\
&&\qquad\le\sum_{k=0}^d \gamma\iint
\beta_1\sum_{i=0}^d
V_i(A\cap K_1\cap \cdots\cap K_{n-1})\\
&&\hspace*{73pt}{}\times \sum
_{r=k}^d V_r(K_n)
\BQ(dK_n) \Lambda ^{n-1}\bigl(d(K_1,
\ldots,K_{n-1})\bigr)
\\
&&\qquad \le\gamma(d+1) \beta_1 \sum_{i=0}^d
\BE V_i(Z_0) \int\sum_{k=0}^d
V_k(A\cap K_1\cap\cdots\cap K_{n-1})\\
&&\hspace*{166pt}{}\times\Lambda^{n-1}\bigl(d(K_1,\ldots,K_{n-1})\bigr).
\end{eqnarray*}
By iterating this step $(n-1)$ more times, we obtain the assertion.
\end{pf}

\begin{lemma}\label{bounds}
Define $\beta_2:=2\cdot25^d$. Then, for $K,W\in\cK^d$,
\[
\lambda_d\bigl(\bigl\{x\in\R^d\dvtx (K+x)\cap\partial W
\neq\varnothing\bigr\}\bigr)\le\beta _2 \sum
_{i=0}^{d-1}V_i(W)\sum
_{r=0}^dV_r(K).
\]
\end{lemma}

\begin{pf} Let $W\neq\varnothing$ and let $\mathcal{Q}(\partial W):=\{
Q_1+z\dvtx z\in\mathbb{Z}^d,(Q_1+z)\cap\partial W\neq\varnothing\}$. Then
we have
\begin{eqnarray*}
&&\lambda_d\bigl(\bigl\{x\in\R^d\dvtx (K+x)\cap\partial W
\neq\varnothing\bigr\}\bigr) \\
&&\qquad\le\sum_{Q\in\mathcal{Q}(\partial W)}\int \I
\bigl\{(K+x)\cap Q\neq\varnothing\bigr\} \,dx
\\
&&\qquad= \sum_{Q\in\mathcal{Q}(\partial W)}V_d(K+Q_1)
\le\bigl|\mathcal{Q}(\partial W)\bigr| c_4\sum_{r=0}^d
V_r(K)
\end{eqnarray*}
with the same constant $c_4$ as in \eqref{est0}. Let $\dist
(x,A):=\inf\{
\| x-y\|\dvtx y\in A\}$ for $x\in\R^d$ and a closed set $A\subset\R^d$,
and let
$\partial^-_r W:=\{x\in W\dvtx \dist(x,\partial W)\le r\}$ for $r\geq0$. Then
%
\begin{equation}
\label{innerest} V_d \bigl(\partial^-_{r} W \bigr)\le
V_d\bigl(W+B^d_{r}\bigr)-V_d(W).
\end{equation}
To see this, let $p_W\dvtx \R^d\to W$ denote the metric projection to $W$
and consider the map
$T\dvtx (W+B^d_r)\setminus W\to\R^d$, $x\mapsto2p_W(x)-x$. Let
$x\in\partial^-_rW$ and choose a point $y\in\partial W$ such that
$\|x-y\|=\dist(x,\partial W)\le r$. Using that $y-x$ is an outer
normal of $W$ at $y$, it is easy to see that
$T(2y-x)=x$. Hence, $\partial^-_rW\subset T((W+B^d_r)\setminus W)$.
Since the metric projection is $1$-Lipschitz, it is not hard to
prove that $T$ has the same property. Therefore, \eqref{innerest}
follows. This yields that
\begin{eqnarray*}
\bigl|\mathcal{Q}(\partial W)\bigr|&\le&\lambda_d\bigl(\bigl\{x\in
\R^d\dvtx \operatorname {dist}(x,\partial W)\le\sqrt{d}\bigr\}\bigr)
\le 2\bigl(V_d\bigl(W+B^d_{\sqrt{d}}
\bigr)-V_d(W)\bigr)\\
&\le&2c_4\sum
_{i=0}^{d-1}V_i(W),
\end{eqnarray*}
where Steiner's formula was used.
\end{pf}

\begin{lemma}\label{lem:BoundInradius}
Let $W\in\cK^d$ be such that $r(W)>0$ and let $k\in\{0,\ldots,d-1\}
$. Then
\[
\frac{V_k(W)}{V_d(W)}\leq\frac{2^d-1}{\kappa_{d-k}r(W)^{d-k}}\leq \frac
{2^d d!}{r(W)^{d-k}}.
\]
\end{lemma}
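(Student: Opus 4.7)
The plan is to combine Steiner's formula \eqref{steiner} with the elementary geometric observation that a convex body contained in the sum of itself with its own inball is contained in the double of the body.

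First, by translation invariance of all intrinsic volumes, I may assume without loss of generality that the inball of $W$ is centered at the origin, so that $B_r^d\subset W$ with $r:=r(W)$. Then, since $W$ is convex and $B_r^d\subset W$, Minkowski-summing the inclusion with $W$ yields
$$
W+B_r^d\subset W+W=2W,
$$
where the identity $W+W=2W$ uses convexity of $W$. Taking $V_d$ on both sides and using the homogeneity $V_d(2W)=2^dV_d(W)$, we obtain
$$
V_d(W+B_r^d)\le 2^dV_d(W).
$$

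Next I apply Steiner's formula \eqref{steiner} to the left-hand side and retain only the two terms corresponding to $i=k$ and $i=d$, which gives
$$
V_d(W)+\kappa_{d-k}r^{d-k}V_k(W)\le\sum_{i=0}^d\kappa_{d-i}r^{d-i}V_i(W)=V_d(W+B_r^d)\le 2^dV_d(W).
$$
Rearranging and dividing by $\kappa_{d-k}r^{d-k}V_d(W)$ (which is strictly positive since $r(W)>0$ forces $V_d(W)>0$) yields the asserted inequality.

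There is essentially no obstacle here; the only modest point is recognizing that the inclusion $B_r^d\subset W$ upgrades, via convexity, to $W+B_r^d\subset 2W$, which is exactly what allows Steiner's expansion to be bounded from above in terms of $V_d(W)$ alone.
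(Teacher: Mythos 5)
Your proof is correct and takes essentially the same approach as the paper: both use the inclusion $W + r(W)B^d \subset 2W$ (justified by convexity once the inball is translated to the origin), expand $V_d(W + r(W)B^d)$ via Steiner's formula, and compare with $V_d(2W) = 2^d V_d(W)$. The only cosmetic difference is that the paper starts from the identity $(2^d-1)V_d(W) = V_d(2W) - V_d(W)$ and works down, while you work from the inclusion up; the content is identical.
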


\begin{pf}
Steiner's formula and the fact that $V_i(W)\geq0$, for $i=0,\ldots
,d-1$, imply that
\begin{eqnarray*}
\bigl(2^d-1\bigr)V_d(W)&=&V_d(2W)-V_d(W)\\
&\ge& V_d\bigl(W+r(W) B^d\bigr)-V_d(W)\\
&=&\sum
_{i=0}^{d-1}\kappa_{d-i}r(W)^{d-i}
V_i(W)\geq\kappa_{d-k} r(W)^{d-k} V_k(W).
\end{eqnarray*}
Now the inequality $\kappa_n\geq1/n!$, $n\in\N$, concludes the proof.
\end{pf}

\begin{pf*}{Proof of Theorem~\ref{thm:variance}}
Let $W\in\cK^d$ with $r(W)\geq1$.
In order to compute the numerator in \eqref{eqn:limitCovariance},
we shall apply \eqref{2.8} with $f=f_{\psi_1,W}$ and $g=f_{\psi_2,W}$.
From \eqref{smfinite}, we conclude that indeed $\E f(\eta)^2<\infty$
and $\E g(\eta)^2<\infty$.
Since $Z$ is stationary, the translation invariance of
a functional $\psi\dvtx \cR^d\to\R$ implies that
$\psi^*\dvtx \cK^d\to\R$ defined by \eqref{psi*} is translation
invariant as well.
From \eqref{Tn}, we get
\begin{eqnarray*}
&& \frac{1}{n!}\int\BE D^n_{K_1,\ldots,K_n}f_{\psi_1,W}(
\eta) \E D^n_{K_1,\ldots,K_n}f_{\psi_2,W}(\eta)
\Lambda^n\bigl(d(K_1,\ldots ,K_n)\bigr)
\\
&&\qquad = \frac{\gamma}{n!} \iiint\psi^*_1\bigl((K+x)\cap K_2
\cap\cdots\cap K_n\cap W\bigr) \\
&&\hspace*{68pt}{}\times\psi^*_2\bigl((K+x)\cap
K_2\cap\cdots\cap K_n\cap W\bigr)
 \\
 &&\hspace*{68pt}{}\times \Lambda^{n-1}\bigl(d(K_2,\ldots,K_n)\bigr)
\Q(dK) \,dx.
\end{eqnarray*}
For $n\in\N$,
we define $f_{W,n}\dvtx \cK^d\to\R$ by
\begin{eqnarray*}
f_{W,n}(K)&:=& \frac{1}{V_d(W)}\iint \psi ^*_1\bigl((K+x)
\cap K_2\cap\cdots\cap K_n\cap W\bigr)\\
 &&\hspace*{51pt}{}\times\psi^*_2\bigl((K+x)\cap K_2 \cap\cdots\cap
K_n\cap W\bigr) \\
&&\hspace*{51pt}{}\times \Lambda ^{n-1}\bigl(d(K_2,
\ldots,K_n)\bigr) \,dx,
\end{eqnarray*}
and
$f_{n}\dvtx \cK^d\to\R$ by
\[
f_{n}(K) := \int\psi^*_1(K\cap K_2\cap\cdots
\cap K_n) \psi ^*_2(K\cap K_2 \cap\cdots\cap
K_n) \Lambda^{n-1}\bigl(d(K_2,
\ldots,K_n)\bigr).
\]
Our aim is to prove that
\[
\sum_{n=1}^\infty\frac{\gamma}{n!}\int
f_{W,n}(K) \BQ(dK)\to\sum_{n=1}^\infty
\frac{\gamma}{n!}\int f_{n}(K) \BQ(dK)
\]
as $r(W)\to\infty$. Since we want to apply the dominated convergence
theorem, we
provide an upper bound for $\sum_{n=1}^\infty\frac{\gamma
}{n!}|f_{W,n}|$, which is independent of $W$. It follows from \eqref
{est3a} in Lemma~\ref{lem:boundintrinsic}, the translation invariance
of $V_i$ and $\Lambda$ and the monotonicity of the intrinsic volumes that
\begin{eqnarray*}
\bigl|f_{W,n}(K)\bigr| &\leq&\sum_{i,j=0}^d
\frac{\beta(\psi_1)\beta(\psi_2)}{V_d(W)} \iint V_i\bigl((K+x)\cap K_2\cap
\cdots\cap K_n\cap W\bigr)
\\
&&\hspace*{99pt}{}\times V_j\bigl((K+x)\cap K_2\cap\cdots\cap K_n
\cap W\bigr)\\
&&\hspace*{99pt}{}\times \Lambda ^{n-1}\bigl(d(K_2,\ldots,K_n)
\bigr) \,dx
\\
& \leq&\sum_{i,j=0}^d \frac{\beta(\psi_1)\beta(\psi_2)}{V_d(W)}
\int V_i(K\cap K_2\cap\cdots\cap K_n)
\Lambda^{n-1}\bigl(d(K_2,\ldots,K_n)\bigr) \\
&&\hspace*{92pt}{}\times\int V_j\bigl((K+x)\cap W\bigr) \,dx
\end{eqnarray*}
for $K\in\cK^d$ and $n\in\N$. Combining this estimate with Lemmas
\ref
{lem:translativeIntegral} and~\ref{lem:kinematic}, we get
%
\begin{equation}
\label{eq:majorant} \qquad\frac{1}{n!}\bigl|f_{W,n}(K)\bigr| \leq(d+1)
\beta_1 \beta(\psi_1)\beta (\psi_2) \Biggl(
\sum_{i=0}^d V_i(K)
\Biggr)^2 \sum_{r=0}^d
\frac{V_r(W)}{V_d(W)} \frac{\alpha^{n-1}}{n!}.
\end{equation}
By \eqref{vi2}, the right-hand side of \eqref{eq:majorant} is
integrable. Moreover, Lemma~\ref{lem:BoundInradius} shows that it is
uniformly bounded for $W\in\cK^d$ with $r(W)\geq1$, and the same holds
if we sum over all $n\in\N$.

Next, we bound $|f_{W,n}(K)-f_n(K)|$ for $K\in\cK^d$ and $n\in\N$. By
using the translation invariance of $\psi^*_1,\psi^*_2$ and $\Lambda$,
we have
\begin{eqnarray*}
&& f_{W,n}(K)-f_n(K)
\\
&&\qquad =\frac{1}{V_d(W)}\iint \bigl( \psi^*_1\bigl((K+x)\cap
K_2\cap\cdots \cap K_n\cap W\bigr)\\
&&\hspace*{86pt}{}\times \psi^*_2
\bigl((K+x)\cap K_2 \cap\cdots\cap K_n\cap W\bigr)
\\
&&\hspace*{86pt}{} -\I\{x\in W\} \psi^*_1\bigl((K+x)\cap K_2\cap\cdots
\cap K_n\bigr) \\
&&\hspace*{86pt}{}\times\psi ^*_2\bigl((K+x)\cap K_2
\cap\cdots\cap K_n\bigr) \bigr) \,dx \Lambda^{n-1}\bigl(d(K_2,\ldots,K_n)
\bigr).
\end{eqnarray*}
Note that the integrand is zero if $x\in W$ and $K+x\subset W$. The
same holds for the
case that $x\notin W$ and $(K+x) \cap W=\varnothing$. This means that the
integrand can be only nonzero if $(K+x)\cap\partial W\neq\varnothing$.
On the other hand, the integrand is always bounded by
\begin{eqnarray*}
&&\bigl|\psi^*_1\bigl((K+x)\cap K_2\cap\cdots\cap
K_n\cap W\bigr) \psi ^*_2\bigl((K+x)\cap K_2
\cap\cdots\cap K_n\cap W\bigr)\bigr|
\\
&&\quad{} +\bigl|\psi^*_1\bigl((K+x)\cap K_2\cap\cdots\cap
K_n\bigr) \psi^*_2\bigl((K+x)\cap K_2 \cap
\cdots\cap K_n\bigr)\bigr|
\\
&& \qquad\leq2 \beta(\psi_1) \beta(\psi_2) \Biggl(\sum
_{i=0}^d V_i\bigl((K+x)\cap
K_2\cap\cdots\cap K_n\bigr) \Biggr)^2,
\end{eqnarray*}
where we have used Lemma~\ref{lem:boundintrinsic} and the monotonicity
of the intrinsic volumes. Hence, we obtain that
\begin{eqnarray*}
&&\bigl |f_{W,n}(K)-f_n(K)\bigr|
\\
&&\qquad \leq\frac{2 \beta(\psi_1) \beta(\psi_2)}{V_d(W)} \iint\I\bigl\{ (K+x)\cap \partial W\neq\varnothing\bigr
\} \\
&&\hspace*{113pt}{}\times\Biggl(\sum_{i=0}^d V_i
\bigl((K+x)\cap K_2\cap \cdots\cap K_n\bigr)
\Biggr)^2
 \,dx \\
 &&\hspace*{113pt}{}\times\Lambda^{n-1}\bigl(d(K_2,\ldots,K_n)\bigr)
\\
&&\qquad \leq\frac{2 \beta(\psi_1) \beta(\psi_2)}{V_d(W)} \sum_{i=0}^d
V_i(K) \int\I\bigl\{(K+x)\cap\partial W\neq\varnothing\bigr\} \,dx
\\
&&\hspace*{30pt}{}\times \int\sum_{r=0}^d V_r(K\cap
K_2\cap\cdots\cap K_n) \Lambda ^{n-1}
\bigl(d(K_2,\ldots,K_n)\bigr),
\end{eqnarray*}
where we have used the fact that $V_i$ is increasing and the
translation invariance of $V_i$ and $\Lambda$ in the last step. Now
Lemmas \ref{lem:kinematic} and~\ref{bounds} yield that
\[
\bigl|f_{W,n}(K)-f_n(K)\bigr| \leq\frac{2 \beta_2 \beta(\psi_1) \beta(\psi_2)
\alpha^{n-1}}{V_d(W)} \Biggl(\sum
_{i=0}^d V_i(K)
\Biggr)^3 \sum_{r=0}^{d-1}
V_r(W).
\]
Together with Lemma~\ref{lem:BoundInradius} and $r(W)\geq1$, this
shows that, for $K\in\cK^d$ and $n\in\N$,
\[
\bigl\llvert f_{W,n}(K)-f_n(K)\bigr\rrvert \le\beta(
\psi_1,\psi_2)\alpha^{n-1} \Biggl(\sum
_{i=0}^dV_i(K) \Biggr)^3
\frac{1}{r(W)}
\]
with $\beta(\psi_1,\psi_2):=2^{d+2} \cdot4^{2^d}\cdot25^{2d} d!
M(\psi
_1)M(\psi_2) (\BE2^{N_{Q_1}} +1 )^2$. Therefore, for $K\in
\cK^d$,
%
\begin{equation}\qquad
\Biggl\llvert \sum_{n=1}^\infty
\frac{\gamma}{n!}f_{W,n}(K)-\sum_{n=1}^\infty
\frac{\gamma}{n!}f_n(K)\Biggr\rrvert \le\gamma\beta(
\psi_1,\psi_2) e^{\alpha} \Biggl(\sum
_{i=0}^dV_i(K) \Biggr)^3
\frac{1}{r(W)}.\label{Rate}
\end{equation}
Now an application of the dominated convergence theorem yields the
convergence result for $r(W)\to\infty$ stated in the theorem.

Under the stronger moment assumption \eqref{vi3}, \eqref
{eqn:AsymptoticCovariances} follows from \eqref{Rate} by carrying out
the integration
with respect to $K$ and collecting all the constants.
\end{pf*}

If the geometric functional is the volume, the asymptotic variance has
a significantly easier representation than in \eqref{eqn:sigma}, namely
%
\begin{equation}
\label{eqn:sigmadd} \sigma_{d,d} := \lim_{r(W)\to\infty}
\frac{\V V_d(Z\cap
W)}{V_d(W)} = (1-p)^2 \int \bigl(e^{\gamma C_d(x)}-1 \bigr) \,dx.
\end{equation}
This follows from an application of the dominated convergence theorem
to the exact variance formula \eqref{eqn:VarianceVolume}. The
inequalities $e^t-1\le te^t$, $t\ge0$ and $C_d(x)\leq v_d$ imply that
\[
\int \bigl(e^{\gamma C_d(x)}-1 \bigr) \,dx\le \gamma e^{\gamma v_d}\int\BE
V_d\bigl(Z_0\cap(Z_0+x)\bigr) \,dx =\gamma
e^{\gamma v_d}\BE V_d(Z_0)^2<\infty.
\]
Together with $C_W(x)/V_d(W)\leq1$, this means that $e^{\gamma
C_d(x)}-1$ is integrable and is an upper bound for $(C_W(x)/V_d(W))
(e^{\gamma C_d(x)}-1 )$. Now the observation that $C_W(x)/V_d(W)\to
1$ as $r(W)\to\infty$ for any $x\in\R^d$ [this follows from
$V_d(W)-C_W(x)\leq V_d(\partial^-_{\|x\|}W)$, \eqref{innerest},
Steiner's formula, and Lemma~\ref{lem:BoundInradius}] yields \eqref
{eqn:sigmadd}. In Section~\ref{seciso}, formulas as \eqref{eqn:sigmadd}
are derived for the other intrinsic volumes.

The following proposition shows that the rate of convergence stated in
Theorem~\ref{thm:variance} is optimal.

\begin{proposition}\label{prop:optimalrate}
Assume that \eqref{vi2} is satisfied and that the typical grain is
full-dimensional with positive probability. Then there is a constant
$c_{d,d}>0$ depending on $\Lambda$ such that
\[
\biggl|\sigma_{d,d} - \frac{\V V_d(Z\cap W)}{V_d(W)}\biggr | \geq \frac
{c_{d,d}}{r(W)}
\]
for $W\in\cK^d$ with $r(W)\geq1$.
\end{proposition}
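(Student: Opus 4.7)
The plan is to start from the exact variance formula \eqref{eqn:VarianceVolume} together with \eqref{eqn:sigmadd} and write
\begin{equation*}
\sigma_{d,d}-\frac{\V V_d(Z\cap W)}{V_d(W)}=(1-p)^2\int \frac{V_d(W\setminus(W+x))}{V_d(W)}\bigl(e^{\gamma C_d(x)}-1\bigr)\,dx,
\end{equation*}
whose integrand is pointwise non-negative, so the left-hand side is already non-negative and the absolute value may be dropped. Since the typical grain is full-dimensional with positive probability, $C_d(0)=v_d>0$, and dominated convergence (using \eqref{vi2}) gives continuity of $C_d$ at the origin; this yields $\rho\in(0,1]$ and $c_0>0$ with $e^{\gamma C_d(x)}-1\geq c_0$ on $B^d_\rho$. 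Restricting the integration to $B^d_\rho$ and applying Fubini together with the change of variables $z=y-x$ reduces the task to showing
\begin{equation*}
\int_W V_d\bigl(B^d_\rho(y)\setminus W\bigr)\,dy\geq c\,\frac{V_d(W)}{r(W)}.
\end{equation*}

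I would prove this bound in three geometric steps, writing $W_{-t}:=\{y\in W:\dist(y,\partial W)\geq t\}$ and $\partial^-_sW:=W\setminus W_{-s}$. First, if $y\in W$ satisfies $\dist(y,\partial W)\leq \rho/2$, then a supporting hyperplane of $W$ at the nearest boundary point cuts from $B^d_\rho(y)$ a spherical cap of volume at least $c_d\rho^d$ lying entirely outside $W$, so the integral is bounded below by $c_d\rho^d\,V_d(\partial^-_{\rho/2}W)$. Second, because $|\nabla\dist(\cdot,\partial W)|=1$ a.e.\ on $W$, the coarea formula gives $V_d(\partial^-_sW)=\int_0^s 2V_{d-1}(W_{-t})\,dt\geq 2s\,V_{d-1}(W_{-s})$ by monotonicity of intrinsic volumes, while the elementary inclusion $(1-s/r(W))W+(s/r(W))z_0\subset W_{-s}$, where $z_0$ is an incenter (a direct convex-combination argument using $B^d_{r(W)}(z_0)\subset W$), combined with the $(d-1)$-homogeneity and translation invariance of $V_{d-1}$, gives $V_{d-1}(W_{-s})\geq(1-s/r(W))^{d-1}V_{d-1}(W)\geq 2^{1-d}V_{d-1}(W)$ for $s=\rho/2\leq r(W)/2$ (valid since $\rho\leq 1\leq r(W)$). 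Third, applying the same coarea identity to $W$ itself together with $V_{d-1}(W_{-t})\leq V_{d-1}(W)$ yields $V_d(W)=\int_0^{r(W)}2V_{d-1}(W_{-t})\,dt\leq 2r(W)V_{d-1}(W)$, hence the key inequality $V_{d-1}(W)\geq V_d(W)/(2r(W))$. Chaining these three estimates produces $\int_W V_d(B^d_\rho(y)\setminus W)\,dy\geq c\rho^{d+1}V_d(W)/r(W)$, as required.

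The hardest part is obtaining the correct order $1/r(W)$ rather than a weaker rate. A direct application of the Brunn-Minkowski inequality to $W_{-s}+B^d_s\subset W$ only yields $V_d(\partial^-_sW)\geq csV_d(W)^{(d-1)/d}$, which translates into a lower bound of order $V_d(W)^{-1/d}$; this is strictly smaller than $1/r(W)$ whenever $W$ is elongated, since then $V_d(W)^{1/d}$ can be arbitrarily large while $r(W)$ stays bounded (a thin slab is the prototypical example). The right route is instead the coarea identity $V_d(W)=\int_0^{r(W)}2V_{d-1}(W_{-t})\,dt$: the inradius enters naturally as the upper limit of integration, which forces the sharp isoperimetric-type inequality $2r(W)V_{d-1}(W)\geq V_d(W)$ that drives the whole argument.
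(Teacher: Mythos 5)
Your proof is correct and follows essentially the same route as the paper's: both start from the exact identity $\sigma_{d,d}-\V V_d(Z\cap W)/V_d(W)=(1-p)^2\int (1-C_W(x)/V_d(W))(e^{\gamma C_d(x)}-1)\,dx$, use full\mbox{-}dimensionality to bound $e^{\gamma C_d(x)}-1$ below on a small ball $B^d_\rho$, pass by Fubini to $\int_W V_d(B^d_\rho(y)\setminus W)\,dy\ge \tilde{c}\rho^d V_d(\partial^-_{\rho/2}W)$ via the spherical\mbox{-}cap estimate, and invoke the coarea identity $V_d(\partial^-_sW)=\int_0^s\mathcal{H}^{d-1}(D_W(t))\,dt$. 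The only (minor, and equally valid) variation is in the final quantitative step: the paper reparametrizes the coarea integral using the monotonicity of $\mathcal{H}^{d-1}(D_W(\cdot))$ to obtain $V_d(W)\le\tfrac{2r(W)}{\rho}V_d(\partial^-_{\rho/2}W)$ directly, whereas you pass through $V_{d-1}(W)$ as an intermediary, using the dilation inclusion $(1-s/r(W))W+(s/r(W))z_0\subset W_{-s}$ together with $(d-1)$\mbox{-}homogeneity for the lower bound, and $V_d(W)\le 2r(W)V_{d-1}(W)$ for the upper bound.
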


\begin{pf}
Recall from the proof of Lemma~\ref{bounds} that
$\partial^-_rW=\{z\in W\dvtx\break  \dist(z,\partial W)\le r\}$ for $r\geq0$. For
$s\ge0$, we define $D_W(s):=\{z\in W\dvtx\break \dist(z,\partial W)=s\}$. Then
\[
W_{-s}:=\bigl\{z\in W\dvtx \dist(z,\partial W)\ge s\bigr\}=\bigl\{z
\in\R ^d\dvtx z+B^d_s\subset W\bigr\}
\]
is convex, the boundary of $W_{-s}$ is $D_W(s)$, and $s\mapsto W_{-s}$
is strictly decreasing
with respect to set inclusion, for $s\in[0, r(W)]$.

It follows from \eqref{eqn:VarianceVolume} and \eqref{eqn:sigmadd} that
\begin{eqnarray*}
&&\sigma_{d,d} - \frac{\V(V_d(Z\cap W))}{V_d(W)}\\
&&\qquad =(1-p)^2 \int
\frac
{V_d(W)-V_d(W\cap(W+x))}{V_d(W)} \bigl(e^{\gamma C_d(x)} - 1 \bigr) \,dx.
\end{eqnarray*}
Since the typical grain is full-dimensional with positive probability,
there are constants $\tau>0$ and $r_0\in(0,1/2)$ such that $e^{\gamma
C_d(x)} - 1 \geq\tau$
for all $x\in B^d_{r_0}$. This means that
%
\begin{eqnarray}
\label{eqn:lowerBoundI} &&\sigma_{d,d} - \frac{\V(V_d(Z\cap W))}{V_d(W)}
\nonumber
\\[-8pt]
\\[-8pt]
\nonumber
&&\qquad\geq(1-p)^2
\frac
{\tau
}{V_d(W)} \int_{B_{r_0}^d} \bigl(V_d(W)-V_d
\bigl(W\cap(W+x)\bigr)\bigr) \,dx.
\end{eqnarray}
Denoting by $B^d(x,r)$ the closed ball with center $x$ and radius $r$,
we have
\begin{eqnarray*}
&&\int_{B_{r_0}^d} \bigl(V_d(W)-V_d\bigl(W
\cap(W+x)\bigr)\bigr) \,dx\\
&&\qquad = \int_{B^d_{r_0}} \int
_W \bigl(\I\{y\in W\} - \I\{y\in W, y\in W+x\}\bigr) \,dy \,dx
\\
&&\qquad = \int_W \bigl(V_d\bigl(B^d(y,r_0)
\bigr) - V_d\bigl(W\cap B^d(y,r_0)\bigr)\bigr)
\,dy
\\
&&\qquad \geq\int_{\partial^-_{r_0/2}W} \bigl(V_d\bigl(B^d(y,r_0)
\bigr) - V_d\bigl(W\cap B^d(y,r_0)\bigr)\bigr)
\,dy.
\end{eqnarray*}
Using that $V_d(B^d(y,r_0)) - V_d(W\cap B^d(y,r_0))\geq\tilde{c}
r_0^d$ for $y\in\partial^-_{r_0/2}W$ with $\tilde{c}>0$, we obtain
%
\begin{equation}
\label{eqn:lowerBoundII} \int_{B_{r_0}^d} \bigl(V_d(W)-V_d
\bigl(W\cap(W+x)\bigr)\bigr) \,dx \geq\tilde{c} r_0^d
V_d\bigl(\partial^-_{r_0/2}W\bigr).
\end{equation}

It follows from Lemma~3.2.34 in \cite{Federer1969} that
\[
V_d\bigl(\partial^-_{r}W\bigr) = \int
_0^r {\cal H}^{d-1}
\bigl(D_W(s)\bigr) \,ds
\]
for $r\in[0,r(W)]$. The discussion at the beginning of this proof
implies that ${\cal H}^{d-1}(D_W(\cdot))$ is strictly decreasing on
$[0,r(W)]$. Together with $V_d(\partial^-_{r(W)}W)=V_d(W)$ we get for
$r(W)\ge r_0/2$ that
\begin{eqnarray*}
V_d(W)&=&\int_0^{r(W)}
\mathcal{H}^{d-1}\bigl(D_W(s)\bigr) \,ds \le\int
_0^{r(W)}\mathcal{H}^{d-1}
\biggl(D_W \biggl(\frac
{r_0}{2r(W)}s \biggr) \biggr) \,ds
\\
&=& \int_0^{r_0/2}\mathcal{H}^{d-1}
\bigl(D_W(t) \bigr) \frac
{2r(W)}{r_0} \,dt =\frac{2r(W)}{r_0}V_d
\bigl(\partial^-_{r_0/2}W\bigr).
\end{eqnarray*}
Combining this with \eqref{eqn:lowerBoundI} and \eqref
{eqn:lowerBoundII} completes the proof.
\end{pf}

\section{Positive definiteness}\label{secpos}

In this section, we consider the positive definiteness of the
asymptotic covariance matrix for geometric functionals $\psi_0,\ldots
,\psi_d$ on $\mathcal{R}^d$.
We assume that $\psi_k$, for $k\in\{0,\ldots,d\}$,
is positively homogeneous of degree $k$ and
%
\begin{equation}
\label{eq0} \bigl|\psi_k(K)\bigr|\ge\tilde{\beta}(\psi_k)
r(K)^k,
\end{equation}
for $K\in\mathcal{K}^d$,
with a constant $\tilde{\beta}(\psi_k)>0$, which only depends on
$\psi
_k$. These conditions are motivated by the intrinsic volumes
$V_0,\ldots
,V_d$, where they are obviously true. The additional assumptions on
$\psi_0,\ldots,\psi_d$ required in this section are used in an
essential way in the proof of Theorem~\ref{posdef} [see \eqref
{eqn:boundML} and \eqref{eqe} below], but are presumably not necessary
conditions for the positive definiteness of the asymptotic covariance
matrix. In particular, \eqref{eq0} is always satisfied if the absolute
value of $\psi_k$ on $\cK^d$ is bounded from below by a functional
$\tilde{\psi}_k\dvtx \cK^d\to\R$ which is positive and monotone [i.e.,
$\tilde{\psi}_k(K)\geq\tilde{\psi}_k(L)$ for $K,L\in\cK^d$ with
$K\supset L$]. This applies to the second example given at the
beginning of Section~\ref{sec:3}. If we assume that there is a constant
$h_0>0$ with $h\geq h_0$, then
\[
\bigl|\psi(K)\bigr|=\int_{\S^{d-1}} h(u) S_k(K,du)\geq
h_0 \,d V\bigl(B^d[d-k], K[k]\bigr)\ge dh_0
\kappa_dr(K)^k
\]
for $K\in\cK^d$, which ensures that \eqref{eq0} is satisfied.

By Theorem~\ref{thm:variance}, for $k,l\in\{0,\ldots,d\}$, the
asymptotic covariances $\sigma_{\psi_k,\psi_l}$
exist under the assumption \eqref{vi2}. The following theorem shows
that the asymptotic covariance matrix
is positive definite. In particular, the result applies to
the intrinsic volumes $V_0,\ldots,V_d$, which also means that their
asymptotic variances are strictly positive.

\begin{theorem}\label{posdef}
Let the preceding assumptions and \eqref{vi2} be satisfied. Moreover,
assume that the typical grain $Z_0$ has nonempty interior
with positive probability. Then
the covariance matrix $\Sigma:= (\sigma_{\psi_k,\psi_l}
)_{k,l=0,\ldots,d}$ is
positive definite.
\end{theorem}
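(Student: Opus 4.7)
For positive definiteness of $\Sigma$ it suffices to show that $\sigma_{\psi,\psi}>0$ for every nonzero linear combination $\psi:=\sum_{k=0}^d a_k\psi_k$ with $a=(a_0,\ldots,a_d)\in\R^{d+1}$; this reduction uses that $\psi\mapsto\psi^*$ from \eqref{psi*} is linear and that the integrand in \eqref{eqn:sigma} is bilinear in $(\psi_1,\psi_2)$. Since \eqref{eqn:sigma} expresses $\sigma_{\psi,\psi}$ as a series of nonnegative terms, it is enough to prove that at least one summand is strictly positive. Unfolding the form \eqref{Lambda} of $\Lambda$, the $n$-th summand becomes
\[
T_n=\frac{\gamma^n}{n!}\int \psi^*\bigl(K_1\cap(K_2+x_2)\cap\cdots\cap(K_n+x_n)\bigr)^2\,dx_2\cdots dx_n\,\Q^n(d(K_1,\ldots,K_n)),
\]
so the task reduces to producing a configuration of positive $\Q^n\otimes\lambda_d^{n-1}$-measure on which $\psi^*$ of the intersection is nonzero.

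The decisive local estimate is that for a convex body $K$ of inradius $\rho$ and bounded aspect ratio $\diam(K)/\rho$,
\[
\psi^*(K)=(p-1)\psi(K)+o(\rho^{j_*}),\qquad \rho\to 0,
\]
where $p=1-e^{-\gamma v_d}\in(0,1)$ is the volume fraction and $j_*:=\min\{k:a_k\neq 0\}$. Splitting $\E\psi(Z\cap K)$ according to $\{K\subset Z\}$, $\{Z\cap K=\emptyset\}$, and partial coverage, the first probability equals $p+O(\rho)$ (the correction is bounded by a Steiner-type estimate on grain boundaries within distance $\rho$ of a fixed point), the second contributes nothing since $\psi(\emptyset)=0$, and the partial-coverage event has probability $O(\rho)$; on that event $|\psi(Z\cap K)|$ is controlled via \eqref{zitier}, the positive-homogeneity scaling $M_{\lambda K_0}(\psi_k)=\lambda^k M_{K_0}(\psi_k)$, and a Cauchy--Schwarz argument combined with an exponential moment bound for $N_K$ (cf.\ the proof of Lemma \ref{lem:boundintrinsic}). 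The main term $(p-1)\psi(K)$ has magnitude of exact order $\rho^{j_*}$: the lower bound \eqref{eq0} gives $|a_{j_*}\psi_{j_*}(K)|\geq|a_{j_*}|\tilde\beta(\psi_{j_*})\rho^{j_*}$, while positive homogeneity and the aspect-ratio bound yield $|\psi_k(K)|=O(\rho^k)$ for $k>j_*$. Since $p\neq 1$, this forces $\psi^*(K)\neq 0$ for $\rho$ sufficiently small.

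To realize such small intersections in $T_n$ I would take $n=d+1$. By the hypothesis on $Z_0$, there is a set of positive $\Q^{d+1}$-measure of tuples $(K_1,\ldots,K_{d+1})$ of full-dimensional bodies. Fixing such a tuple and a point $x_0\in\inter(K_1)$, I would choose shifts $x_2,\ldots,x_{d+1}$ so that locally supporting hyperplanes of $\partial(K_i+x_i)$ near $x_0$ are in general position; this cuts out a small simplex-like convex body around $x_0$, and perturbing the $x_i$ produces a set of shift tuples of positive Lebesgue measure on which the intersection has arbitrarily small inradius with bounded aspect ratio. Combined with the local estimate this yields $T_{d+1}>0$, hence $\sigma_{\psi,\psi}>0$. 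The principal obstacle is precisely this geometric construction: for smooth tangential grain contacts the intersections tend to be thin lens-shapes whose diameter is of order $\sqrt{\rho}$ rather than $\rho$, so one must either exploit curvature contrast at the contact points to achieve a controlled aspect ratio or sharpen the partial-coverage bound to allow for such elongated intersections.
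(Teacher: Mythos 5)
Your strategy coincides in all its essential moves with the paper's: reduce to showing a single summand in the series \eqref{eqn:sigma} is positive, take $n=d+1$, and analyze the leading-order behaviour of $\psi^*$ on small intersections. Your local estimate for $\psi^*(L)$ is essentially the same as the paper's: the paper writes $\psi^*_k(L)=-\exp(-s_1(L)-s_2(L))\psi_k(L)+R_k(L)$, where $s_1(L)$ and $s_2(L)$ are the Poisson intensities of grains partially and totally covering $L$, with $R_k(L)=O(R(L)^{k+1})$ and $\exp(-s_2(L))$ bounded below by $1-p$; combined with the lower bound \eqref{eq0} on $\psi_{k_0}(L)$, this yields positivity once $R(L)$ is small and $R(L)/r(L)$ is bounded. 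So far you are on solid ground.

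However, you openly acknowledge a gap, namely the geometric construction of translation vectors producing intersections $L=K_1\cap(K_2+x_2)\cap\cdots\cap(K_{d+1}+x_{d+1})$ that are simultaneously small and of bounded aspect ratio, on a set of shifts of positive Lebesgue measure. Your concern about lens-shaped intersections is a real one if one naively pushes grains into tangential contact, but it is resolved in the paper by Lemma \ref{lem:Intersection} via a support-cone scaling argument that you did not supply. Concretely, one picks $d+1$ unit normals $u_1,\ldots,u_{d+1}$ at the vertices of a regular simplex, for each $i$ a boundary point $x_i\in\partial K_i$ with outer normal $u_i$, and then uses the fact that $t(K_i-x_i)$ converges (in the topology of closed convergence) to the support cone $S(K_i,x_i)\subset H^-(K_i,u_i)-x_i$ as $t\to\infty$. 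After a further small translation by $z_i/t$ one can arrange that the intersection of the translated support cones $\bigcap_i(S(K_i,x_i)+z_i)$ has non-empty interior and is contained in the bounded intersection of the corresponding half-spaces; in particular it has circumradius less than $1$ and strictly positive inradius. Continuity of circumradius and inradius under translation then gives, for all large $t$, both the size bound $R(L)<1/t$ and the aspect-ratio bound $R(L)/r(L)<\tilde c_0$, and perturbing in the translation variables produces a set of shifts of positive $\lambda_d^d$-measure with the same properties. Without this argument the proof is incomplete: the positivity of the $n=d+1$ summand is exactly what the aspect-ratio control is needed for, and neither ``curvature contrast'' nor a sharper partial-coverage bound is used or seems to lead anywhere simpler.

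A small technical remark: your statement $\psi^*(K)=(p-1)\psi(K)+o(\rho^{j_*})$ is not quite what the argument needs or delivers. The coefficient $\exp(-s_1(L)-s_2(L))$ is not equal to $1-p$; it suffices (and is what the paper uses) that $s_1(L)\to0$ and that $\exp(-s_2(L))$ is bounded below by $1-p>0$ uniformly, so one keeps the factor $\exp(-s_2(L))$ throughout rather than replacing it by its limit.
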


\begin{pf}
For a vector $a=(a_0,\ldots,a_d)^\top\in\R^{d+1}$, we have
\begin{eqnarray*}
a^\top\Sigma a &=&\gamma\sum_{n=1}^\infty
\frac{1}{n!} \iint \Biggl(\sum_{k=0}^d
a_k \psi^*_k(K_1\cap K_2\cap
\cdots\cap K_n) \Biggr)^2\\
&&\hspace*{54pt}{}\times \Lambda^{n-1}
\bigl(d(K_2,\ldots,K_n)\bigr) \Q(dK_1).
\end{eqnarray*}
Since each summand is nonnegative, the matrix $\Sigma$ is positive definite
if we can prove that one summand is greater than zero for a
given $a\in\R^{d+1}\setminus\{0\}$. Specifically, under the given assumptions
we shall show that the summand obtained for $n=d+1$ is positive.
In order to show this, we shall prove that for $K_1,\ldots,K_{d+1}$ in
the support of $\Q$ and
having nonempty interiors, there is a set of translation
vectors $x_2,\ldots,x_{d+1}\in\R^d$ of positive $\lambda_d^d$ measure
(recall that $\lambda_d$ denotes
$d$-dimensional Lebesgue measure) for which
\[
\sum_{k=0}^d a_k
\psi^*_k\bigl(K_1\cap(K_2+x_2)
\cap\cdots\cap (K_{d+1}+x_{d+1})\bigr)\neq0.
\]
For the rest of the proof, we argue with a nonempty convex body $L\in
\cK^d$. Properties which
will be required of $L$ will be provided by an application of Lemma~\ref
{lem:Intersection} and
$L$ of the form $L=K_1\cap(K_2+x_2)\cap\cdots\cap
(K_{d+1}+x_{d+1})\in
\mathcal{K}^d$, for a set of translation vectors
$x_2,\ldots,x_{d+1}\in\R^d$ of positive $\lambda_d^d$ measure. This
will finally prove the preceding assertion,
and thus the theorem.

Let $N_1(L)$ be the number of grains of $\eta$ that intersect $L$,
but do not contain it, and let $N_2(L)$ be the number of grains of
$\eta$
that contain $L$. Then $N_1(L)$ and $N_2(L)$ are independent,
Poisson distributed random variables with parameters
\begin{eqnarray*}
s_1(L)&=&\Lambda\bigl(\bigl\{K\in\cK^d\dvtx K\cap L\neq
\varnothing\mbox{ and } L \not \subset K\bigr\}\bigr) \quad\mbox{and}\\
s_2(L)&=&\Lambda\bigl(\bigl\{K\in\cK^d\dvtx L\subset K\bigr
\}\bigr).
\end{eqnarray*}
If $N_2(L)\neq0$, then $L\subset Z$ and, therefore,
$\psi_k(Z\cap L)-\psi_k(L)=\psi_k(L)-\psi_k(L)=0$.
If $N_1(L)=N_2(L)=0$, then $Z\cap L=\varnothing$, and hence
$
\psi_k(Z\cap L)-\psi_k(L)=0-\psi_k(L)=-\psi_k(L)$.
This leads to
%
\begin{eqnarray}
\label{eqa} \psi^*_k(L)&=&\BE \bigl[\psi_k(Z\cap L)-
\psi_k(L) \bigr]
\nonumber
\\[-8pt]
\\[-8pt]
\nonumber
& =&-\exp\bigl(-s_1(L)-s_2(L)
\bigr)\psi_k(L)+R_k(L),
\end{eqnarray}
where
\[
R_k(L)=\BE{\mathbf{1}}\bigl\{N_1(L)
\ge1,N_2(L)=0\bigr\}\bigl(\psi_k(Z\cap L)-
\psi_k(L)\bigr).
\]
Next, we bound $R_k(L)$ from above. So assume that
$N_1(L)\neq0$. Let $K_1,\ldots,\break K_{N_1(L)}$
denote the grains of $\eta$ which hit $L$, but do not contain $L$. With
the definition of
$M_L(\psi_k)$ from \eqref{Mloc}, we obtain from \eqref{zitier} that
\[
\bigl|\psi_k(Z\cap L)-\psi_k(L)\bigr| \le\bigl|\psi_k(Z
\cap L)\bigr|+\bigl|\psi_k(L)\bigr|\le 2^{N_1(L)} M_L({
\psi_k}).
\]
In the following, let $R(K)$ stand for the radius of the circumscribed
ball of $K\in\cK^d$. For $A\in\cK^d$ with $A\subset L$, let $\hat
{a}\in
\R^d$
be the center of the circumball of $A$, hence $A-\hat{a}\subset2R(A)Q_1$.
Since $\psi_k$ is translation-invariant, homogeneous of degree $k$, and
locally bounded, we get
\[
\bigl|\psi_k(A)\bigr|=\bigl(2R(A)\bigr)^k\bigl|\psi_k
\bigl(\bigl(2R(A)\bigr)^{-1}(A-\hat{a})\bigr) \bigr|\le\bigl(2R(A)
\bigr)^k M(\psi_k),
\]
and hence
%
\begin{equation}
\label{eqn:boundML} M_L(\psi_k)\le\bigl(2R(L)
\bigr)^k M(\psi_k).
\end{equation}
Thus, in the present case, we have
\[
\bigl|\psi_k(Z\cap L)-\psi_k(L)\bigr|\le2^{N_1(L)}
\bigl(2R(L)\bigr)^k M({\psi_k}).
\]
Hence, the remainder term can be bounded from above by
\begin{eqnarray*}
\bigl|R_k(L)\bigr| & \leq&\BE \bigl[{\mathbf{1}}\bigl\{N_1(L)
\ge1,N_2(L)=0\bigr\} 2^{N_1(L)}\bigl(2R(L)\bigr)^k
M(\psi_k) \bigr]
\\
& = &\exp\bigl(-s_2(L)\bigr) \bigl(2R(L)\bigr)^k M(
\psi_k) \exp\bigl(s_1(L)\bigr) \bigl(1-\exp
\bigl(-2s_1(L)\bigr) \bigr)
\\
&\leq&\exp\bigl(-s_2(L)\bigr) \bigl(2R(L)\bigr)^k M(
\psi_k) \exp\bigl(s_1(L)\bigr)2s_1(L).
\end{eqnarray*}
Next, we derive an upper bound for $s_1(L)$. By definition and the reflection
invariance of Lebesgue measure, we have
\[
s_1(L)=\gamma\iint\I\bigl\{(L+x)\cap K\neq\varnothing,L+x\not\subset
K\bigr\} \,dx \Q(dK).
\]
To bound the inner integral from above, we can assume that $L\in\cK
^d_o$, by the translation
invariance of Lebesgue measure. If the integrand is nonzero, then $x\in
(K+R(L)B^d)\setminus K$ or $x\in\partial K_{R(L)}^-$.
Then inequality \eqref{innerest} implies that the inner integral is
bounded from above by $2V_d((K+R(L)B^d)\setminus K)$.
Hence, if $R(L)\le1$, Steiner's formula and our moment assumption
yield that
\[
s_1(L)\le c_6 R(L),
\]
where $c_6$ denotes a constant depending on $\Lambda$. Hence, if $R(L)$
is sufficiently small, then $s_1(L)\le1$, and thus
%
\begin{eqnarray}
\label{eqb} \bigl|R_k(L)\bigr|&\le&6 \cdot\bigl(2R(L)\bigr)^k
M(\psi_k) s_1(L)\exp\bigl(-s_2(L)\bigr)
\nonumber
\\[-8pt]
\\[-8pt]
\nonumber
&\le&6\cdot2^k\cdot c_6 M(\psi_k)
R(L)^{k+1}\exp\bigl(-s_2(L)\bigr).
\end{eqnarray}
We also have from \eqref{eqn:boundML} that
%
\begin{eqnarray}
\label{eqc}&& \bigl|\exp\bigl(-s_1(L)-s_2(L)\bigr)
\psi_k(L)\bigr|
\nonumber
\\[-8pt]
\\[-8pt]
\nonumber
&&\qquad\leq M_L(\psi_k) \exp
\bigl(-s_2(L)\bigr)\le \bigl(2R(L)\bigr)^k M(
\psi_k) \exp\bigl(-s_2(L)\bigr).
\end{eqnarray}
Hence, if $R(L)$ is sufficiently small, we deduce from \eqref{eqa},
\eqref{eqb} and \eqref{eqc} that
%
\begin{equation}
\label{eqd}\bigl |\psi_k^*(L)\bigr|\le\bar{\beta}(\psi_k)
R(L)^k \exp\bigl(-s_2(L)\bigr),
\end{equation}
where $\bar{\beta}(\psi_k)$ is a constant depending on $\Lambda$ and
$\psi_k$.
In addition,
%
\begin{equation}
\label{eqe} \bigl|\exp\bigl(-s_1(L)-s_2(L)\bigr)
\psi_k(L)\bigr|\ge\exp\bigl(-s_2(L)\bigr) \bigl(\tilde{\beta
}(\psi _k)/3\bigr) r(L)^k,
\end{equation}
if $s_1(L)\le1$, with $\tilde{\beta}(\psi_k)$ as in \eqref{eq0}.

Let $k_0$ be the smallest $k\in\{0,\ldots,d\}$ such that $a_k\neq0$. Then,
if $R(L)$ is sufficiently small, we get
\begin{eqnarray*}
&&\Biggl|\sum_{k=0}^d a_k
\psi^*_k(L) \Biggr|\\
&&\qquad= \Biggl|\sum_{k=k_0}^da_k
\psi ^*_k(L) \Biggr|
\\
&&\qquad= \Biggl|-a_{k_0} \exp\bigl(-s_1(L)-s_2(L)\bigr)
\psi_{k_0}(L)+a_{k_0}R_{k_0}(L) +\sum
_{k=k_0+1}^da_k \psi^*_k(L) \Biggr|
\\
&&\qquad\ge|a_{k_0}| \bigl|\exp\bigl(-s_1(L)-s_2(L)\bigr)
\psi_{k_0}(L)\bigr|-\bigl\llvert a_{k_0}R_{k_0}(L)\bigr
\rrvert -\sum_{k=k_0+1}^d|a_k|\bigl |
\psi^*_k(L)\bigr|
\\
&&\qquad\ge\exp\bigl(-s_2(L)\bigr) \bigl(|a_{k_0}| \bigl(\tilde{
\beta}(\psi_{k_0})/3\bigr) r(L)^{k_0}-\beta^*
R(L)^{k_0+1} \bigr),
\end{eqnarray*}
where we used \eqref{eqb} and \eqref{eqe}, for $k=k_0$, and \eqref{eqd}
for $k\ge k_0+1$. Here, we denote by $\beta^*$ a constant which
depends on
$a_{k_0},\ldots,a_d,\psi_{k_0},\ldots,\psi_d,\Lambda$.
The lower bound thus obtained is positive if $R(L)$ is sufficiently
small and $R(L)/r(L)\le c_0$,
for some constant $c_0$. The proof is completed by an application of
Lemma~\ref{lem:Intersection} below.
\end{pf}

The following lemma on the ratio of circumradius and inradius of
translates of convex bodies
is a key argument in the proof of Theorem~\ref{posdef}.

\begin{lemma}\label{lem:Intersection}
For all $K_1,\ldots,K_{d+1}\in\cK^d$ with nonempty interior there is a
constant $c_0>0$ such that
\begin{eqnarray*}
&&\lambda_d^{d} \Biggl( \Biggl\{(x_2,
\ldots,x_{d+1})\in\bigl(\R^d\bigr)^{d}\dvtx R(L)<
c_0 r(L) \mbox{ and } R(L) \leq r \\
&&\hspace*{143pt}\mbox{for } L=K_1\cap
\bigcap_{i=2}^{d+1}(K_i+x_i)
\Biggr\} \Biggr)>0
\end{eqnarray*}
for all $r>0$.
\end{lemma}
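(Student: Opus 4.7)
The plan is to exhibit one specific tuple $(x_2^*,\dots,x_{d+1}^*)$ for which $L^*:=K_1\cap\bigcap_{i=2}^{d+1}(K_i+x_i^*)$ is trapped between an inscribed ball and a circumscribed polytope of comparable size, both of diameter of order a freely chosen parameter $\epsilon>0$, and then to promote this single example to an open neighbourhood via a direct perturbation.

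For each $i=1,\dots,d+1$ fix an inscribed ball $B^d(c_i,\rho_i)\subset K_i$ (possible with $\rho_i>0$ since $K_i$ has non-empty interior), and choose unit vectors $u_1,\dots,u_{d+1}\in S^{d-1}$ that positively span $\R^d$ (e.g.\ vertices of a regular simplex centred at the origin, so that some $\delta>0$ satisfies $\max_i\langle w,u_i\rangle\ge\delta\|w\|$ for every $w\in\R^d$). Let $p_i\in K_i$ be a support point of $K_i$ in direction $u_i$, i.e.\ $\langle p_i,u_i\rangle=h(K_i,u_i)$, and set $C_i:=(h(K_i,u_i)-\langle c_i,u_i\rangle)/\rho_i>0$. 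For $\epsilon\in(0,\min_i\rho_i/2]$, define
\[
\tilde z:=p_1-\frac{2\epsilon}{\rho_1}(p_1-c_1),\qquad x_i^*:=\tilde z-p_i+\frac{2\epsilon}{\rho_i}(p_i-c_i)\quad(i=2,\dots,d+1).
\]
Two matched inclusions lie at the heart of the construction: (a) the cone argument - $\operatorname{conv}(B^d(c_i,\rho_i)\cup\{p_i\})\subset K_i$ contains $B^d(p_i-t(p_i-c_i),t\rho_i)$ for every $t\in[0,1]$ - applied with $t=2\epsilon/\rho_i$ gives $B^d(\tilde z,2\epsilon)\subset K_i+x_i^*$ (reading $x_1^*=0$); (b) using the defining property $\langle p_i,u_i\rangle=h(K_i,u_i)$, a short support-function calculation yields $K_i+x_i^*\subset\{y:\langle y-\tilde z,u_i\rangle\le 2\epsilon C_i\}$. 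The main obstacle is engineering (a) and (b) simultaneously: both the inscribed ball and the bounding hyperplane are anchored at the apex of the cone, and this apex must be the actual support point $p_i$ - choosing an arbitrary boundary point on the ray $c_i+\R_{\ge0}u_i$ would destroy~(b).

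Intersecting (a) and (b) over all $i$ gives $B^d(\tilde z,2\epsilon)\subset L^*\subset\bigcap_{i=1}^{d+1}\{y:\langle y-\tilde z,u_i\rangle\le 2\epsilon C_i\}$; the positive-spanning property makes the outer polytope bounded with circumradius at most $2\epsilon(\max_iC_i)/\delta$. Now allow a perturbation $x_i=x_i^*+h_i$ with $\|h_i\|\le\epsilon$, $i=2,\dots,d+1$. Both inclusions survive with slack $\epsilon$: $B^d(\tilde z,\epsilon)\subset B^d(\tilde z+h_i,2\epsilon)=B^d(\tilde z,2\epsilon)+h_i\subset K_i+x_i$, and $K_i+x_i\subset\{y:\langle y-\tilde z,u_i\rangle\le 2\epsilon C_i+\epsilon\}$, while the $K_1$-inclusion is untouched. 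Setting $c_0:=(2\max_iC_i+1)/\delta$, every such tuple satisfies $r(L)\ge\epsilon$ and $R(L)\le c_0\epsilon$, hence $R(L)\le c_0\, r(L)$; imposing additionally $\epsilon\le r/c_0$ yields $R(L)\le r$. The closed product box $\prod_{i=2}^{d+1}B^d(x_i^*,\epsilon)$ lies inside the set of valid tuples and has positive $\lambda_d^{d}$-measure, completing the proof.
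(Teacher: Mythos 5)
Your proof is correct, and it takes a genuinely different and more elementary route than the paper's. The paper works with the support cone $S(K_i,x_i)$ at a support point $x_i$ of $K_i$ in direction $u_i$, invokes the fact that $t(K_i-x_i)\to S(K_i,x_i)$ in the topology of closed convergence as $t\to\infty$ (via \cite[Theorem 12.2.2]{SW08} and \cite[Theorem 1.8.8]{Sch93}, \cite[Theorem 12.3.3]{SW08}), and deduces that for large $t$ the rescaled intersection has inradius-to-circumradius ratio close to that of the limit $S_0=\bigcap_i(S(K_i,x_i)+z_i)$; it then appeals to continuity of the intersection under translations to obtain a set of positive measure. Your construction replaces this limit argument by explicit estimates: the inscribed cone $\operatorname{conv}(B^d(c_i,\rho_i)\cup\{p_i\})\subset K_i$ furnishes the lower bound $B^d(\tilde z,2\epsilon)\subset K_i+x_i^*$, and the support hyperplane at $p_i$ furnishes the upper bound $K_i+x_i^*\subset\{y:\langle y-\tilde z,u_i\rangle\le 2\epsilon C_i\}$, with both anchored at the same base point $\tilde z$; the positive-spanning constant $\delta$ of the regular-simplex directions then gives an explicit $c_0=(2\max_iC_i+1)/\delta$. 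The perturbation step, replacing $x_i^*$ by any $x_i$ with $\|x_i-x_i^*\|\le\epsilon$, plays the role of the paper's continuity remark, but is done with explicit slack rather than by appealing to continuity of $r(\cdot)$ and $R(\cdot)$. What your approach buys: it is self-contained (no references to limit theorems for convex bodies), and it makes the constant $c_0$ and the size of the set of admissible translations fully explicit. What the paper's approach buys: by identifying the limiting shape $S_0$ as an intersection of cones, it exposes the geometric reason why the ratio $R/r$ stabilizes, which is conceptually illuminating even if not needed for the quantitative statement.
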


\begin{pf}
Let $u_1,\ldots,u_{d+1}\in\R^d$ be unit vectors whose
endpoints are the vertices of a regular simplex.
For $i=1,\ldots,d+1$ let $x_i$ be a point in the boundary of $K_i$ which
has $u_i$ as an exterior normal vector. The
support cone $S(K_i,x_i)$ of $K_i$ at~$x_i$ (cf.~\cite{Sch93}, page~81)
then satisfies
\[
K_i-x_i\subset S(K_i,x_i):=
\mathrm{cl} \biggl(\bigcup_{
t>0}t(K_i-x_i)
\biggr) \subset H^-(K_i,u_i)-x_i,
\]
where $H^-(K_i,u_i)$ is the supporting half-space of $K_i$ with
exterior unit normal $u_i$
and $\mathrm{cl}$ denotes the closure.
By \cite{SW08}, Theorem~12.2.2, it follows that $t(K_i-x_i)\to S(K_i,x_i)$
in the topology of closed
convergence as $t\to\infty$. Moreover, since $K_1,\ldots,K_{d+1}$
have nonempty interiors, there are vectors $z_1,\ldots,z_{d+1}\in\R^d$
such that the origin is an interior point of
\[
S_0:=\bigcap_{i=1}^{d+1}
\bigl(S(K_i,x_i)+z_i \bigr)\subset\bigcap
_{i=1}^{d+1} \bigl(H^-(K_i,u_i)-x_i+z_i
\bigr)
\]
and the circumradius of the intersection on the right-hand side is less
than $1$ (say).
Then \cite{Sch93}, Theorem~1.8.10
and \cite{SW08}, Theorem~12.3.3, imply that
\[
S_0=\lim_{t\to\infty} \Biggl(t \bigcap
_{i=1}^{d+1}\bigl(K_i+x_i(t)
\bigr) \Biggr),
\]
where $x_i(t):=-x_i+t^{-1}z_i$ and the convergence is with respect
to the Hausdorff distance. Since the inradius and the circumradius of the
intersection of translates of convex bodies are continuous with respect
to the translations as long as the intersection has nonempty interior,
there is some $t_0>1$ such that the ratio between inradius and
circumradius of
\[
t \bigcap_{i=1}^{d+1} \bigl(K_i+x_i(t)
\bigr)
\]
is close to the corresponding ratio of $S_0$, for $t\ge t_0$
and, therefore, also
\[
1\le\frac{R (\bigcap_{i=1}^{d+1}
 (K_i+x_i(t) ) )}{r (\bigcap_{i=1}^{d+1}
 (K_i+x_i(t) ) )}<\tilde{c}_0,
\]
with a constant $\tilde{c}_0>1$ which depends only on $K_1,\ldots,K_{d+1}$.
Moreover, for $t\ge t_0>1$ we have
\[
R \Biggl(t \bigcap_{i=1}^{d+1}
\bigl(K_i+x_i(t)\bigr) \Biggr)\le R \Biggl(\bigcap
_{i=1}^{d+1} \bigl(H^-(K_i,u_i)-x_i+z_i
\bigr) \Biggr)< 1
\]
and thus
\[
R \Biggl( \bigcap_{i=1}^{d+1}
\bigl(K_i+x_i(t)\bigr) \Biggr)<\frac{1}{t}.
\]
Therefore, if $r<1/(2t_0)$ the proof of the lemma is completed by
remarking that
the intersections are continuous with respect to
translations as long as the intersection has nonempty
interior and by using the translation invariance of Lebesgue measure.
Clearly, this proves the lemma for all $r>0$.
\end{pf}

\section{Some integral formulas for intrinsic volumes}\label{sintegra}

We shall see in the next section that
in the particularly important case of intrinsic volumes and
under the assumption of isotropy the asymptotic covariances
of Theorem~\ref{thm:variance} can be expressed in terms of the numbers
%
\begin{eqnarray}
\label{rhoijlimit} &&\rho_{i,j}:=\gamma \sum^\infty_{n=1}
\frac{1}{n!}\iint V_{i}(K_1\cap\cdots\cap
K_n) V_{j}(K_1\cap\cdots\cap K_n)
\nonumber
\\[-8pt]
\\[-8pt]
\nonumber
&&\hspace*{87pt}{}\times
\Lambda^{n-1}\bigl(d(K_2,\ldots,K_n)\bigr)
\mathbb{Q}(dK_1).
\end{eqnarray}
In this section, we study these numbers \emph{without isotropy assumption}
on $Z$. The results are of independent interest.

For $W\in\cK^d$ and $i,j\in\{0,\ldots,d\}$, we define
%
\begin{eqnarray}
\label{ijW} &&\rho_{i,j}(W):= \sum^\infty_{n=1}
\frac{1}{n!}\int V_{i}(K_1\cap\cdots\cap
K_n\cap W) V_{j}(K_1\cap\cdots\cap
K_n\cap W)
\nonumber
\\[-8pt]
\\[-8pt]
\nonumber
&&\hspace*{90pt}{}\times\Lambda^n\bigl(d(K_1,
\ldots,K_n)\bigr),
\end{eqnarray}
which is a finite window version of $\rho_{i,j}$. The numbers $\rho
_{i,j}(W)$ are further studied
in \cite{HLSarxiv}, Appendix B.
The relationship between \eqref{rhoijlimit} and \eqref{ijW} is given in
the next corollary.

\begin{corol}\label{cor:alimit}
Let $i,j\in\{0,\ldots,d\}$. If \eqref{vi2} is satisfied, then $\rho
_{i,j}<\infty$ and
%
\begin{equation}
\label{rhoij} \lim_{r(W)\to\infty}\frac{\rho_{i,j}(W)}{V_d(W)}=
\rho_{i,j}.
\end{equation}
If \eqref{vi3} is satisfied, then there is a constant $c_{i,j}$ such that
\[
\biggl\llvert \rho_{i,j} - \frac{\rho_{i,j}(W)}{V_d(W)}\biggr\rrvert \le
\frac
{c_{i,j}}{r(W)}
\]
for $W\in\cK^d$ with $r(W)\geq1$.
\end{corol}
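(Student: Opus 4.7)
The plan is to mimic the proof of Theorem \ref{thm:variance} almost verbatim, with the functionals $\psi_1^{*}, \psi_2^{*}$ replaced by $V_i, V_j$. Using the representation \eqref{Lambda} of the intensity measure and singling out the first grain, rewrite
$$
\frac{\rho_{i,j}(W)}{V_d(W)}=\gamma\sum_{n=1}^\infty\frac{1}{n!}\int g_{W,n}(K)\,\BQ(dK),
$$
where
$$
g_{W,n}(K):=\frac{1}{V_d(W)}\iint V_i((K+x)\cap K_2\cap\hdots\cap K_n\cap W)\,V_j((K+x)\cap K_2\cap\hdots\cap K_n\cap W)\,\Lambda^{n-1}(d(K_2,\hdots,K_n))\,dx,
$$
and set $g_n(K):=\int V_i(K\cap K_2\cap\hdots\cap K_n)\,V_j(K\cap K_2\cap\hdots\cap K_n)\,\Lambda^{n-1}(d(K_2,\hdots,K_n))$, so that the claimed value of $\rho_{i,j}$ is $\gamma\sum_{n\geq 1}(1/n!)\int g_n(K)\,\BQ(dK)$.

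For the existence and finiteness, I would bound $g_{W,n}$ in exactly the way $f_{W,n}$ is bounded in the proof of Theorem \ref{thm:variance}: split the product, use the monotonicity and translation invariance of $V_i,V_j,\Lambda$, apply Lemma \ref{lem:translativeIntegral} to the $x$-integral, and then Lemma \ref{lem:kinematic} iteratively to the $\Lambda^{n-1}$-integral, yielding
$$
\frac{1}{n!}\,|g_{W,n}(K)|\leq c\,\bigg(\sum_{r=0}^d V_r(K)\bigg)^2 \sum_{r=0}^d\frac{V_r(W)}{V_d(W)}\,\frac{\alpha^{n-1}}{n!}.
$$
Under \eqref{vi2} and using Lemma \ref{lem:BoundInradius} for $r(W)\geq 1$, this produces a $\BQ$-integrable majorant summable in $n$, hence dominated convergence applies and, in particular, $\rho_{i,j}<\infty$.

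For the pointwise convergence $g_{W,n}(K)\to g_n(K)$ and the quantitative rate, I would follow the difference estimate in Theorem \ref{thm:variance}: the integrand in $g_{W,n}(K)-g_n(K)$ vanishes whenever $K+x\subset W$ or $(K+x)\cap W=\emptyset$, so it is supported on $\{x:(K+x)\cap\partial W\neq\emptyset\}$. Bounding the integrand there by $(\sum_r V_r((K+x)\cap K_2\cap\cdots\cap K_n))^2$, applying Lemma \ref{bounds} to the $x$-integration, Lemma \ref{lem:kinematic} to the remaining $\Lambda^{n-1}$-integral, and finally Lemma \ref{lem:BoundInradius}, one obtains
$$
|g_{W,n}(K)-g_n(K)|\leq c\,\alpha^{n-1}\bigg(\sum_{r=0}^d V_r(K)\bigg)^3\frac{1}{r(W)}
$$
for $r(W)\geq 1$. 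Summing over $n$ with the $1/n!$ factor gives an $e^{\alpha}$-type geometric series; integrating against $\BQ$ is finite under \eqref{vi3} and yields the claimed bound.

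The only thing to note is that here one uses $V_i,V_j$ directly rather than their starred variants, which is legitimate because the proof of Theorem \ref{thm:variance} used $\psi^{*}$ only through the estimate \eqref{est3a}, and for the intrinsic volumes themselves one has the trivial and stronger bound $V_i(L)\leq\sum_{r=0}^d V_r(L)$; no further ideas are required and no serious obstacle arises.
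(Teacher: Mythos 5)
Your proposal is correct and follows exactly the route the paper itself indicates: the paper's proof of Corollary \ref{cor:alimit} is just the one-line remark that the statement ``can be proved in a similar way as Theorem \ref{thm:variance}'', and your write-up is precisely that adaptation, with the only (correctly noted) simplification being that $V_i(L)\leq\sum_{r=0}^d V_r(L)$ replaces the estimate \eqref{est3a} used for the starred functionals.
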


\begin{pf}
This can be proved in a similar way as Theorem~\ref{thm:variance}.
\end{pf}

The previous corollary describes $\rho_{i,j}$ as the limit of
$V_d(W)^{-1}\rho_{i,j}(W)$ for
observation windows with $r(W)\to\infty$. It is, however, more
convenient to work with the series representation \eqref{rhoijlimit}.
We shall see that this series can be expressed in
terms of a finite family of (curvature) measures $H_{i,j}$ to be
introduced below.

For $j\in\{0,\ldots,d\}$ and $K\in\mathcal{K}^d$, we let
$\Phi_j(K;\cdot)$ denote the $j$th curvature measure
of $K$ (see \cite{SW08}, Section~14.2).
In particular, $\Phi_d(K;\cdot)$ is the restriction of Lebesgue
measure to $K$ while $\Phi_{d-1}(K;\cdot)$
is half the $(d-1)$-dimensional Hausdorff measure restricted to
the boundary of $K$ (if the affine hull of $K$ has full dimension).
Furthermore, $\Phi_j(K;\R^d)=V_j(K)$ for all $j\in\{0,\ldots,d\}$.
For $j\in\{0,\ldots,d-1\}$, $n\in\N$, and
$K_1,\ldots,K_n\in\mathcal{K}^d$ we define
%
\begin{equation}
\label{78} \Phi_j(K_1,\ldots,K_n;
\cdot):= \Phi_j(K_1\cap\cdots\cap K_n;
\partial K_1\cap\cdots\cap\partial K_n\cap\cdot).
\end{equation}
Since $\Phi_j(K_1;\cdot)$, $j\in\{0,\ldots,d-1\}$, is concentrated on
the boundary
$\partial K_1$ of $K_1$, this definition is consistent with the
case $n=1$.
For $i\in\{1,\ldots,d-1\}$ and $k\in\{1,\ldots,d-i\}$,
we define a measure $H^{k}_{i,d}$ on $\R^d$ by
%
\begin{eqnarray}
\label{3.1neu}&& H^{k}_{i,d}:=\gamma\iiiint\I\{y-z\in\cdot\}
\I\{z\in K_1\cap \cdots\cap K_k\} \Phi_i(K_1,
\ldots,K_k;dy) \,dz
\nonumber
\\[-8pt]
\\[-8pt]
\nonumber
&&\hspace*{75pt}{}\times
\Lambda^{k-1}\bigl(d(K_1,\ldots,K_{k-1})\bigr)
\BQ(dK_k),
\end{eqnarray}
with the appropriate interpretation of the case $k=1$.

For $i,j\in\{1,\ldots,d-1\}$, $k\in\{1,\ldots,d-i\}$,
$l\in\{1,\ldots,d-j\}$, and $m\in\{0,\ldots,k\wedge l\}$
we define a measure $H^{k,l,m}_{i,j}$ on $\R^d$ by
%
\begin{eqnarray}
\label{3.101} H^{k,l,m}_{i,j}\dvtx &=&\gamma\iiiint\I\{y-z\in
\cdot\}\nonumber\\
&&\hspace*{41pt}{}\times \I\bigl\{y\in K^\circ_{k+1}\cap\cdots\cap
K^\circ_{k+l-m},z\in K^\circ _1\cap
\cdots\cap K^\circ_{k-m}\bigr\}
\nonumber
\\[-8pt]
\\[-8pt]
\nonumber
&&\hspace*{41pt}{}\times \Phi_i(K_1,\ldots,K_{k};dy)
\Phi_j(K_{k+1-m},\ldots,K_{k+l-m};dz)
\\
&&\hspace*{41pt}{}\times \Lambda^{k+l-m-1}\bigl(d(K_1,\ldots,K_{k+l-m-1})\bigr)
\BQ(dK_{k+l-m}),\nonumber
\end{eqnarray}
where $K^\circ$ denotes the interior of $K\in\mathcal{K}^d$ and with
the appropriate interpretation of the cases $m=k$ or $m=l$. Let
\begin{eqnarray*}
H_{i,d}&:=&\sum^{d-i}_{k=1}
\frac{1}{k!}H^k_{i,d}, i\in\{1,\ldots ,d-1\},
\\
H_{i,j}&:=&\sum^{d-i}_{k=1}\sum
^{d-j}_{l=1}\sum
^{k\wedge l}_{m=0} \frac{1}{m!(k-m)!(l-m)!}H^{k,l,m}_{i,j},\qquad
i,j\in\{1,\ldots,d-1\},
\end{eqnarray*}
and, for $j\in\{0,\ldots,d-1\}$,
%
\begin{equation}
\label{3.9}\qquad h_{0,j}:=\sum^{d-j}_{l=1}
\frac{\gamma}{l!}\iint\Phi_j\bigl(K_1,\ldots
,K_{l};\R^d\bigr) \Lambda^{l-1}
\bigl(d(K_1,\ldots,K_{l-1})\bigr) \BQ(dK_l).
\end{equation}
Moreover, we define
$
H_{d,d}(dx):= (1-e^{-\gamma C_d(x)} ) \,dx$,
$H_{0,j}:=H_{j,0}:=h_{0,j}\delta_0$ for $j\in\{0,\ldots,d-1\}$,
and $H_{0,d}:=H_{d,0}:= (1-e^{-\gamma v_d} )\delta_0$,
where $\delta_0$ is the Dirac measure concentrated at the origin and
$C_d(x)$ is
the mean covariogram of the typcial grain as defined in
\eqref{meancov}.

Subsequently, we assume that
%
\begin{equation}
\label{condfull} \Q\bigl(\bigl\{K\in\mathcal{K}^d\dvtx
V_d(K)>0\bigr\}\bigr)=1,
\end{equation}
that is, the typical grain almost surely has
nonempty interior.

\begin{theorem}\label{tdeltaij} Assume that \eqref{vi2} and \eqref
{condfull} are satisfied. Then the measures $H_{i,j}$ are all finite.
Moreover, the limits \eqref{rhoij} are given by
%
\begin{equation}
\label{formularhoij} \rho_{i,j}=\int e^{\gamma C_d(x)} H_{i,j}(dx),\qquad
i,j\in\{0,\ldots,d\}.
\end{equation}
For $i=d$ or $j=d$, the result remains true without assumption \eqref
{condfull}.
\end{theorem}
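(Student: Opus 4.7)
My plan is to establish the identity by first computing $\rho_{i,j}(W)/V_d(W)$ in a form from which the limit is transparent, and then invoking Corollary \ref{cor:alimit}. Starting from \eqref{ijW} and applying Lemma \ref{ldecomp} to both $V_i$ and $V_j$ of $K_1\cap\hdots\cap K_N\cap W$, for $\Lambda^N$-almost every tuple I obtain the decomposition $\Phi_i(K_1\cap\hdots\cap K_N;\cdot)=\sum_{\emptyset\neq S\subseteq[N]}\Phi_i(K_{s_1},\hdots,K_{s_{|S|}};\cdot\cap\inter(K_{[N]\setminus S}))$ (and analogously for $\Phi_j$); this produces a double sum over pairs $(S,T)$ of boundary-supporting subsets for two representative points $y$ and $z$. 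Boundary contributions from $\partial W$ are of order $V_{d-1}(W)$ and vanish after division by $V_d(W)$ thanks to Lemma \ref{lem:BoundInradius}. For $i=d$ or $j=d$ the corresponding decomposition is trivial with $S$ or $T$ forced to equal $[N]$.

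Grouping terms by $(k,l,m):=(|S|,|T|,|S\cap T|)$ and exploiting permutation symmetry of $\Lambda^N$, I reduce each group to a single canonical labelling (indices $1,\hdots,k-m$ in $S\setminus T$, $k-m+1,\hdots,k$ in $S\cap T$, $k+1,\hdots,k+l-m$ in $T\setminus S$, the remaining $r:=N-k-l+m$ bodies in the bulk) with multinomial counting factor $\binom{N}{m,k-m,l-m,r}$. Each bulk body $K$ contributes the indicator $\mathbf{1}\{y,z\in K\}$, and $\int\mathbf{1}\{y,z\in K\}\,\Lambda(dK)=\gamma C_d(z-y)$ follows directly from the definition of $\Lambda$ and Fubini. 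Summing over $r\geq 0$, combined with $1/N!$ and the multinomial, produces the factor $e^{\gamma C_d(z-y)}/(m!(k-m)!(l-m)!)$.

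To identify the remaining integral over the $k+l-m$ non-bulk bodies with $H^{k,l,m}_{i,j}$, I pick one of them, say $K_{k+l-m}$, split its intensity as $\Lambda(dK_{k+l-m})=\gamma\int dx\,\Q(dK')\,\mathbf{1}\{K'+x\in dK_{k+l-m}\}$, substitute $K_{k+l-m}=K'+x$, and translate all remaining non-bulk bodies together with $y,z$ by $-x$, using translation equivariance of the curvature measures and translation invariance of $\Lambda^{k+l-m-1}$. The only residual $x$-dependence lies in $\mathbf{1}\{y,z\in W\}$, whose $x$-integral equals $C_W(y-z)$. This yields
\[
\frac{\rho_{i,j}(W)}{V_d(W)}=\sum_{k,l,m}\frac{1}{m!(k-m)!(l-m)!}\int\frac{C_W(u)}{V_d(W)}\,e^{\gamma C_d(u)}\,H^{k,l,m}_{i,j}(du)+o(1),
\]
and dominated convergence (using $C_W(u)/V_d(W)\leq 1$, pointwise convergence to $1$ as $r(W)\to\infty$, and the uniform bound $e^{\gamma C_d(u)}\leq e^{\gamma v_d}$) together with Corollary \ref{cor:alimit} gives \eqref{formularhoij}.

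Finiteness of each $H^{k,l,m}_{i,j}$, needed both for dominated convergence and for the a priori claim of the theorem, follows by estimating the curvature measure of a multiple intersection by a product of intrinsic volumes of the individual bodies and then iterating the translative integral bound of Lemma \ref{lem:translativeIntegral} in the spirit of Lemma \ref{lem:kinematic}; under \eqref{vi2} the total is finite. The case $j=d$ is recovered by the same argument with the trivial decomposition of $\Phi_d$ forcing $T=[N]$, $l=N$, $m=k$, no bulk on the $z$-side, and no curvature hypothesis on $Z_0$, explaining why \eqref{condfull} is not required there; the case $i=j=d$ is cleanest by a direct expansion of $V_d^2=\iint\mathbf{1}\{y,z\in K_1\cap\hdots\cap K_n\}\,dy\,dz$ and summation over $n$ to obtain $\rho_{d,d}=\int(e^{\gamma C_d(x)}-1)\,dx$, which matches $\int e^{\gamma C_d(x)}H_{d,d}(dx)$. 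The main obstacles I anticipate are controlling the window boundary terms uniformly in $(k,l,m)$ and verifying the almost sure validity of Lemma \ref{ldecomp} under the Poisson integrator; \eqref{condfull} enters precisely to ensure the interior conditions in \eqref{3.101} are nondegenerate when $i,j<d$.
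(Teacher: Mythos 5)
Your route is correct in spirit but genuinely different from the paper's proof of Theorem~\ref{tdeltaij}. The paper computes the series $\rho_{i,j}$ directly from its translation-invariant form \eqref{rhoijlimit}, where one body is $\Q$-distributed and the rest are $\Lambda$-distributed, so no observation window $W$ ever enters and no boundary remainders arise: after Lemma~\ref{ldecomp} and grouping by $(|I|,|J|,|I\cap J|)=(k,l,m)$ with a symmetry reduction, the sum over the bulk bodies closes to $e^{\gamma C_d}$ and the residual integral is recognized as $H^{k,l,m}_{i,j}$ using the covariance property \eqref{3.67a} and the factorization of $\Lambda$. You instead start from the finite-window quantity $\rho_{i,j}(W)$, exhibit $\int C_W\,e^{\gamma C_d}\,H_{i,j}$ as the dominant part, and invoke Corollary~\ref{cor:alimit} together with dominated convergence; this essentially reconstructs the paper's own Appendix C computation (Theorem~\ref{tremainder}) and then requires showing that the boundary remainder $r_{i,j}(W)$ is $o(V_d(W))$. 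That last step is precisely the complication the paper's direct route sidesteps; it is true, but your ``of order $V_{d-1}(W)$'' is too glib — when both $I$ and $J$ pick up $\partial W$ one must argue through translative-kinematic bounds and Lemma~\ref{lem:BoundInradius}, essentially redoing the remainder estimates underlying Corollary~\ref{cor:alimit}. Your finiteness argument for $H^{k,l,m}_{i,j}$ and the translation-splitting of $\Lambda$ to produce the $C_W$ factor are the same as the paper's. One small slip in your sketch: for $j=d$, $\Phi_d(K;\cdot)$ is Lebesgue measure on the interior, so it admits no boundary decomposition; the boundary-supporting set for $z$ is $T=\emptyset$ (hence $l=0$, $m=0$), not $T=[N]$, and there \emph{is} bulk on the $z$-side — that is why $H^{k}_{i,d}$ in \eqref{3.1neu} carries the plain indicator $\I\{z\in K_1\cap\ldots\cap K_k\}$ with a Lebesgue $dz$. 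Your assertion $T=[N]$, $l=N$ contradicts your own ``no bulk'' remark. With those caveats the plan works; the trade-off is that the paper's route is shorter because it never sees $W$, whereas your route incidentally recovers the finite-window decomposition of Appendix C.
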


In particular, we thus have
%
\begin{equation}
\label{3.11} \rho_{d,d}=\int \bigl(e^{\gamma C_d(x)}-1 \bigr) \,dx,
\end{equation}
$\rho_{0,d}=e^{\gamma v_d}-1$, and $\rho_{0,j}=e^{\gamma v_d}h_{0,j}$
for $j\in\{0,\ldots,d-1\}$.

The proof of Theorem~\ref{tdeltaij} is based on the following
geometric result. Here, we use the abbreviation $[n]=\{1,\ldots,n\}$.

\begin{lemma}\label{ldecomp}
Let $K_1, K_2',\ldots,K_n'\in\cK^d$, $n\in\N$, have nonempty interiors,
and let $i\in\{0,\ldots,d-1\}$. Then
\[
\Curvi(K_1\cap\cdots\cap K_n;\cdot) =\mathop{\sum
_{\varnothing\neq I\subset[n]}}_{ |I|\le d-i} \Curvi \biggl(\bigcap
_{r\in I}K_r;\cdot\cap \bigcap
_{r\in I}\partial K_r\cap\bigcap
_{s\notin I}K_s^\circ \biggr),
\]
for almost all translates $K_i$ of $K_i'$ for $i=2,\ldots,n$.
\end{lemma}

Hence, if \eqref{condfull} is satisfied and $K_1\in\mathcal{K}^d$ has
nonempty interior, then this lemma
can be applied for $\Lambda^{n-1}$-a.e. $(K_2,\ldots,K_n)\in(\cK^d)^{n-1}$.

Before we prove Lemma~\ref{ldecomp}, we provide two auxiliary results.

\begin{lemma}\label{zeroexception}
Let $K_1,\ldots,K_m\in\cK^d$, $m\ge2$, have nonempty interiors.
Then, for $\mathcal{H}^{d(m-1)}$-almost all $(t_2,\ldots,t_m)\in\R
^{d(m-1)}$,
if $K_1\cap( K_2+t_2)\cap\cdots\cap(K_m+t_m)\neq\varnothing$, then
$ (K_1)^\circ\cap(K_2+t_2)^\circ\cap\cdots\cap(K_m+t_m)^\circ
\neq
\varnothing$.
\end{lemma}

\begin{pf} The assertion is proved by induction over $m\ge2$. For
$m=2$, the assertion holds, since any $t_2\in\R^d$ such that
$K_1\cap(K_2+t_2)\neq\varnothing$ and $K_1^\circ\cap
(K_2^\circ+t_2)=\varnothing$ is contained in the boundary of
$K_1+(-K_2)$, which has $d$-dimensional Hausdorff measure zero. The
induction step follows from the case $m=2$ and Fubini's theorem. For
further details, see \cite{HLSarxiv}.
\end{pf}

For the following lemma, we use basic notions from geometric measure
theory (see, e.g., \cite{Federer1969}).

\begin{lemma}\label{rectify}
Let $K_1,\ldots,K_m\in\cK^d$, $m\in\N$. If $m\le d$,
then for $\mathcal{H}^{d(m-1)}$-almost all translates
$(t_2,\ldots,t_m)\in\R^{d(m-1)}$, the intersection
$\partial K_1\cap(\partial K_2+t_2)\cap\cdots\cap(\partial K_m+t_m)$
has finite
$(d-m)$-dimensional Hausdorff measure. For $m>d$, the intersection is the
empty set for almost all translation vectors.
\end{lemma}

\begin{pf}
Since for $m=1$ there is nothing to show, we assume that $m\in\{
2,\ldots
,d\}$.
Let $W:=\partial K_1\times\cdots\times\partial K_m\subset\R^{dm}$,
let $Z\subset\R^{d(m-1)}$ be the compact image set of the Lipschitz map
$T\dvtx W\to Z\subset\R^{d(m-1)}$, $(x_1,\ldots,x_m)\mapsto
(x_1-x_2,\ldots
,x_1-x_m)$.
Then the assumptions of the coarea theorem (\cite{Federer1969},
Theorem~3.2.22
(2))
are satisfied. Thus, for $\mathcal{H}^{d(m-1)}$-almost all
$(t_2,\ldots,t_m)\in Z$, the set $T^{-1}\{(t_2,\ldots,t_m)\}$
has finite $\mathcal{H}^{d-m}$ measure.
Identify $\R^{dm}$ with $(\R^d)^m$ and denote by $\pi_1\dvtx (\R^d)^m\to
\R^d$
the projection to the first component, which is a Lipschitz map. Then
$
\partial K_1\cap(\partial K_2+t_2)\cap\cdots\cap(\partial K_m+t_m)
=\pi_1 (T^{-1}\{(t_2,\ldots,t_m)\} )
$
has finite $(d-m)$-dimensional Hausdorff measure for $\mathcal
{H}^{d(m-1)}$-almost all
$(t_2,\ldots,t_m)\in Z$. [If $(t_2,\ldots,t_m)\notin Z$, the
intersection is the empty set.]

The assertion for $m>d$ easily follows from the one for $m=d$.
\end{pf}

\begin{pf*}{Proof of Lemma~\ref{ldecomp}}
There is nothing to show for $n=1$ so that we assume that $n\ge2$.
By Lemmas \ref{zeroexception} and \ref{rectify},
we can assume that $K_1,\ldots,K_n$ have a common interior point and
for $\varnothing\neq I\subset[n]$ each intersection
$\bigcap_{r\in I}\partial K_r$ has finite $(d-|I|)$-dimensional
Hausdorff measure if $|I|\le d$, and otherwise is the empty set.

Since $\Curvi(K_1\cap\cdots\cap K_n,\cdot)$ is concentrated on the boundary
of $K_1\cap\cdots\cap K_n$, the measure property yields that
\[
\Curvi(K_1\cap\cdots\cap K_n;\cdot)=\sum
_{\varnothing\neq I\subset[n]} \Curvi \biggl(K_1\cap\cdots\cap
K_n;\cdot\cap \bigcap_{r\in I}\partial
K_r\cap\bigcap_{s\notin I}K_s^\circ
\biggr).
\]
The intersection $U:=\bigcap_{s\notin I}K_s^\circ$ is open,
$U':=\bigcap_{r\in I}\partial K_r\cap\bigcap_{s\notin I}K_s^\circ
\subset U$,
and $K_1\cap\cdots\cap K_n\cap U=\bigcap_{r\in I}K_r\cap U$. Hence,
since $\Curvi$ is locally determined (see~\cite{Sch93}, page~215), it
follows that
\[
\Curvi(K_1\cap\cdots\cap K_n;\cdot)=\sum
_{\varnothing\neq I\subset[n]} \Curvi \biggl( \bigcap_{r\in I}
K_r;\cdot\cap\bigcap_{r\in
I}\partial
K_r\cap \bigcap_{s\notin I}K_s^\circ
\biggr).
\]
Since $\bigcap_{r\in I}\partial K_r$ has finite $(d-|I|)$-dimensional
Hausdorff measure for $|I|\in\{1,\ldots,d\}$, and is the empty set
for $|I|>d$, we conclude that if $d\ge|I|>d-i$, then $\bigcap_{r\in
I}\partial K_r$
has $i$-dimensional Hausdorff measure zero.
A special case of \cite{ColesantiHug}, Theorem~5.5, then yields that
\[
\Curvi \biggl( \bigcap_{r\in I} K_r;\cdot
\cap\bigcap_{r\in
I}\partial K_r\cap \bigcap
_{s\notin I}K_s^\circ \biggr)=0,
\]
which completes the proof.
\end{pf*}

\begin{pf*}{Proof of Theorem~\ref{tdeltaij}}
We start with showing that the measures $H_{i,j}$ are finite. Let
$i,j\in\{1,\ldots,d-1\}$, $k\in\{1,\ldots,d-i\}$,
$l\in\{1,\ldots,d-j\}$, and $m\in\{0,\ldots,k\wedge l\}$. Then
\begin{eqnarray*}
H_{i,j}^{k,l,m}\bigl(\R^d\bigr) & \leq&\gamma\iiiint
\I\{K_1\cap\cdots\cap K_{k+l-m}\neq\varnothing\}
\Phi_i(K_1,\ldots,K_k;dy)
\\
&&\hspace*{39pt}{}\times \Phi_j(K_{k+1-m},\ldots,K_{k+l-m};dz)\\
&&\hspace*{39pt}{}\times \Lambda
^{k+l-m-1}\bigl(d(K_1,\ldots ,K_{k+l-m-1})\bigr)
\Q(dK_{k+l-m})
\\
& \leq&\gamma\iint V_0(K_1\cap\cdots\cap
K_{k+l-m}) V_i(K_1) V_j(K_{k+l-m})
\\
&&\hspace*{27pt}{}\times \Lambda^{k+l-m-1}\bigl(d(K_1,\ldots,K_{k+l-m-1})\bigr)
\Q(dK_{k+l-m}).
\end{eqnarray*}
For $k+l-m=1$ the right-hand side is finite because of assumption
\eqref
{vi2}. Otherwise, we obtain by Lemmas \ref{lem:kinematic} and
\ref
{lem:translativeIntegral} that
\begin{eqnarray*}
&& H_{i,j}^{k,l,m}\bigl(\R^d\bigr)
\\
&&\qquad \leq\gamma^2 \alpha^{k+l-m-2} \iint\sum
_{r=0}^d V_r\bigl((K_1+x)
\cap K_{k+l-m}\bigr) V_i(K_1)\\
&&\hspace*{124pt}{}\times V_j(K_{k+l-m}) \,dx \Q^2\bigl(d(K_1,K_{k+l-m})
\bigr)
\\
&&\qquad \leq(d+1) \gamma^2 \alpha^{k+l-m-2}\beta_1 \int
\sum_{r=0}^d V_r(K_1)
\sum_{r=0}^d V_r(K_{k+l-m})
 V_i(K_1)V_j(K_{k+l-m})\\
 &&\hspace*{210pt}{}\times
\Q^2\bigl(d(K_1,K_{k+l-m})\bigr).
\end{eqnarray*}
Now it follows from \eqref{vi2} that the right-hand side is finite.
Similar (but easier) arguments show that the other measures are also finite.

Note that $\rho_{i,j}=\rho_{j,i}$ for $i,j\in\{0,\ldots,d\}$. To prove
that the series \eqref{rhoijlimit} is given by
\eqref{formularhoij}, we distinguish different cases
and start with $i=j=d$. Then we have
\begin{eqnarray*}
\rho_{d,d}&=&\gamma\sum^\infty_{n=1}
\frac{1}{n!}\iint V_{d}(K_1\cap\cdots\cap
K_n)^2 \Lambda^{n-1}\bigl(d(K_2,
\ldots,K_n)\bigr) \mathbb{Q}(dK_1)
\\
&=& \sum^\infty_{n=1}\frac{\gamma^{n}}{n!}
\idotsint \I\bigl\{y\in K_1\cap(K_2+x_2)\cap
\cdots\cap(K_n+x_n)\bigr\}
\\
&& \hspace*{68pt}{}\times\I\bigl\{z\in K_1\cap(K_2+x_2)\cap\cdots\\
&&\hspace*{144pt}{}\cap(K_n+x_n)\bigr\} \,dy \,dz \,dx_2\cdots
\,dx_n\\
&&\hspace*{68pt}{}\times \mathbb{Q}^n\bigl(d(K_1,
\ldots,K_n)\bigr)
\\
&=& \sum^\infty_{n=1}\frac{\gamma^{n}}{n!}
\iiint V_d\bigl((K_2-y)\cap (K_2-z)\bigr)
\cdots V_d\bigl((K_n-y)\cap(K_n-z)\bigr)
\\
&&\hspace*{56pt}{}\times \I\{y\in K_1\}\I\{z\in K_1\} \,dy \,dz
\mathbb{Q}^n\bigl(d(K_1,\ldots ,K_n)\bigr)
\\
&=& \sum^\infty_{n=1}\frac{\gamma^{n}}{n!}
\iiint\bigl(\mathbb {E}V_d\bigl(Z_0\cap
(Z_0+y-z)\bigr)\bigr)^{n-1} \I\{y,z\in K_1\} \,dy
\,dz \mathbb{Q}(dK_1)
\\
&=& \sum^\infty_{n=1}\frac{\gamma^{n}}{n!}
\iiint\bigl(\mathbb {E}V_d\bigl(Z_0\cap
(Z_0+y)\bigr)\bigr)^{n-1} \I\{y+z\in K_1\}\\
&&\hspace*{54pt}{}\times \I\{z
\in K_1\} \,dy \,dz \mathbb{Q}(dK_1)
\\
&=& \sum^\infty_{n=1}\frac{\gamma^{n}}{n!}\int
C_d(y)^n \,dy = \int \bigl(e^{\gamma C_d(y)}-1 \bigr) \,dy.
\end{eqnarray*}

For $i=0$ and $j=d$, we get by an even simpler calculation
\[
\rho_{0,d} = \sum_{n=1}^\infty
\frac{\gamma^n}{n!} \bigl(\BE V_d(Z_0)\bigr)^n=
e^{\gamma v_d}-1.
\]
%
This and the preceding calculation do not depend on assumption \eqref
{condfull}.

Next, we turn to $i=0$ and $j\in\{0,\ldots,d-1\}$. Then, using
$V_j(L)=\Phi_j(L;\R^d)$,
for $L\in\mathcal{K}^d$ and Lemma~\ref{ldecomp}, we get
\begin{eqnarray*}
&&\rho_{0,j} =\gamma\sum^\infty_{n=1}
\frac{1}{n!} \sum_{l=1}^{d-j}\mathop{
\sum_{J\subset[n]}}_{ |J|=l}\iiint \I\biggl\{z\in
\bigcap_{s\notin J}K_s^\circ\biggr\}
\Phi_j(K_J;dz) \Lambda^{n-1}
\bigl(d(K_2,\ldots,K_n)\bigr) \\
&&\hspace*{134pt}{}\times\mathbb{Q}(dK_1),
\end{eqnarray*}
%
where $\Phi_j(K_J;\cdot)=\Phi_j(K_{j_1},\ldots,K_{j_l};\cdot)$ for
$J=\{
j_1,\ldots,j_l\}$ [see \eqref{78}].
At this stage and also later, we use the covariance property
%
\begin{eqnarray}
\label{3.67a} &&\int h(y) \Phi_i(K_1,
\ldots,K_l;dy)
\nonumber
\\[-8pt]
\\[-8pt]
\nonumber
&&\qquad= \int h(y+x) \Phi_i(K_1-x,
\ldots,K_l-x;dy),\qquad x\in\R^d,
\end{eqnarray}
which holds for all measurable $h\dvtx \R^d\rightarrow[0,\infty]$.
This follows from the definition \eqref{78} and \cite{SW08},
Theorem~14.2.2.
Using \eqref{3.67a} and then the invariance of $\Lambda$ under translations,
it is easy to check that, for instance,
\begin{eqnarray*}
&&\iiint \I\bigl\{z\in K_{l+1}^\circ\cap\cdots\cap
K_n^\circ\bigr\} \Phi_j(K_{\{1,\ldots,l\}};dz)
\Lambda^{n-1}\bigl(d(K_1,\ldots,K_{n-1})\bigr)
\mathbb{Q}(dK_n)
\\
&&\qquad =\iiint \I\bigl\{z\in K_{l+1}^\circ\cap\cdots\cap
K_n^\circ\bigr\} \Phi_j(K_{\{1,\ldots,l\}};dz)\\
&&\hspace*{21pt}\qquad\quad{}\times\Lambda^{n-1}\bigl(d(K_2,\ldots,K_{n})\bigr)
\mathbb {Q}(dK_1).
\end{eqnarray*}
From such symmetry relations, we deduce that
\begin{eqnarray*}
\rho_{0,j} &=&\gamma\sum_{l=1}^{d-j}
\sum^\infty_{n=l}\frac{1}{n!}\pmatrix{n
\cr
l}\iiint \I\bigl\{z\in K_{l+1}^\circ\cap\cdots\cap
K_n^\circ\bigr\}
\\
&&\hspace*{101pt}{}\times \Phi_j(K_1,\ldots,K_l;dz)
\\
&&\hspace*{101pt}{}\times \Lambda^{n-1}\bigl(d(K_2,\ldots,K_n)\bigr)
\mathbb{Q}(dK_1)
\\
&=&\gamma\sum_{l=1}^{d-j} \sum
^\infty_{n=l}\frac{1}{n!}\pmatrix{n
\cr
l}
\gamma^{n-l}\iiint V_d(K_{l+1})\cdots
V_d(K_n) \Phi_j\bigl(K_1,
\ldots,K_l;\R^d\bigr)
\\
&&\hspace*{124pt}{}\times \mathbb{Q}^{n-l}\bigl(d(K_{l+1},\ldots,K_n)
\bigr) \\
&&\hspace*{124pt}{}\times\Lambda^{l-1}\bigl(d(K_2,\ldots,K_l)
\bigr) \mathbb{Q}(dK_1)
\\
&=&\gamma\sum_{l=1}^{d-j}\frac{1}{l!}
\sum^\infty_{n=l}\frac{(\gamma v_d)^{n-l}}{(n-l)!}\iint\Phi
_j\bigl(K_1,\ldots,K_l;\R^d
\bigr) \\
&&\hspace*{112pt}{}\times\Lambda^{l-1}\bigl(d(K_2,\ldots,K_l)
\bigr) \mathbb{Q}(dK_1) =e^{\gamma v_d} h_{0,j}.
\end{eqnarray*}

Next, we address the case $i\in\{1,\ldots,d-1\}$ and $j=d$. Using again
Lemma~\ref{ldecomp} and a symmetry argument (as above), we
obtain
\begin{eqnarray*}
&&\rho_{i,d} = \gamma\sum^\infty_{n=1}
\frac{1}{n!} \sum_{k=1}^{d-i}\pmatrix{n
\cr
k}\iiiint\I\bigl\{y\in K_{k+1}^\circ\cap \cdots \cap
K_n^\circ\bigr\}\\
&&\hspace*{138pt}{}\times \I\{z\in K_1\cap\cdots\cap
K_n\}
\, dz\\
&&\hspace*{138pt}{}\times \Phi_i(K_1,\ldots,K_k;dy)\\
&&\hspace*{138pt}{}\times\Lambda^{n-1}\bigl(d(K_2,\ldots,K_n)\bigr)
\mathbb{Q}(dK_1).
\end{eqnarray*}
%
Then we interchange the order of summation to get
\begin{eqnarray*}
\rho_{i,d}&=& \gamma\sum_{k=1}^{d-i}
\sum^\infty_{n=k}\frac{\gamma^{n-k}}{k!(n-k)!}\idotsint
\I\bigl\{x_{k+1}\in\bigl(K_{k+1}^\circ-y\bigr)\cap(
K_{k+1}-z)\bigr\} \cdots
\\
&&\hspace*{123pt}{} \times\I\bigl\{x_{n}\in\bigl(K_{n}^\circ-y
\bigr)\cap( K_{n}-z)\bigr\}\\
&&\hspace*{123pt}{} \times \mathbb{Q}(dK_{k+1})\cdots
\mathbb{Q}(dK_{n}) \,dx_{k+1}\cdots dx_n
\\
&&\hspace*{123pt}{}\times \I\{z\in K_1\cap\cdots\cap K_k\}
\Phi_i(K_1,\ldots,K_k;dy) \,dz\\
&&\hspace*{123pt}{}\times\Lambda^{k-1} \bigl(d(K_2,\ldots,K_k)\bigr)
\mathbb{Q}(dK_1)
\\
&=& \gamma\sum_{k=1}^{d-i} \sum
^\infty_{n=k}\frac{\gamma^{n-k}}{k!(n-k)!} \iiiint \bigl(
\mathbb{E} V_d\bigl(Z_0\cap(Z_0+y-z)\bigr)
\bigr)^{n-k}\\
&&\hspace*{119pt}{}\times\I\{ z\in K_1\cap\cdots\cap K_k\}
\\
&&\hspace*{119pt}{}\times \Phi_i(K_1,\ldots,K_k;dy) \,dz\\
&&\hspace*{119pt}{}\times\Lambda^{k-1} \bigl(d(K_2,\ldots,K_k)\bigr)
\mathbb{Q}(dK_1)
\\
&=& \gamma\sum_{k=1}^{d-i}\frac{1}{k!}
\iiiint e^{\gamma C_d(y-z)} \I\{z\in K_1\cap\cdots\cap K_k\}
\\
&&\hspace*{67pt}{}\times \Phi_i(K_1,\ldots,K_k;dy) \,dz
\Lambda^{k-1} \bigl(d(K_2,\ldots,K_k)\bigr)
\mathbb{Q}(dK_1),
\end{eqnarray*}
which yields that
\[
\rho_{i,d} =\sum_{k=1}^{d-i}
\frac{1}{k!}\int e^{\gamma C_d(x)} H^k_{i,d}(dx) =
\int e^{\gamma C_d(x)} H_{i,d}(dx).
\]

Finally, we consider the case where $i,j\in\{1,\ldots,d-1\}$.
Again by Lemma~\ref{ldecomp}, we get
\begin{eqnarray*}
\rho_{i,j} & =&\gamma\sum^\infty_{n=1}
\frac{1}{n!}\sum_{k=1}^{d-i}\sum
_{l=1}^{d-j} \mathop{\sum
_{I\subset[n]}}_{ |I|=k}\mathop{\sum
_{J\subset[n]}}_{
|J|=l}\iiiint \I\biggl\{y\in\bigcap
_{r\notin I}K_r^\circ,z\in\bigcap
_{s\notin J}K_s^\circ\biggr\}
\\
&&\hspace*{150pt}{}\times \Phi_i(K_I;dy) \Phi_j(K_J;dz)
\\
&&\hspace*{150pt}{}\times\Lambda^{n-1}\bigl(d(K_2,\ldots,K_n)\bigr)
\mathbb{Q}(dK_1)
\\
& =&\gamma\sum^\infty_{n=1}\frac{1}{n!}
\sum_{k=1}^{d-i} \sum
_{l=1}^{d-j}\sum_{m=0}^{k\wedge l}
\mathop{\sum_{I,J\subset[n]}}_{ |I|=k,|J|=l,|I\cap J|=m }\iiiint \I\biggl
\{y\in\bigcap_{r\notin I}K_r^\circ,z
\in\bigcap_{s\notin J}K_s^\circ
\biggr\}
\\
&&\hspace*{199pt}{}\times \Phi_i(K_I;dy) \Phi_j(K_J;dz)\\
&&\hspace*{199pt}{}\times\Lambda^{n-1}\bigl(d(K_2,\ldots,K_n)\bigr)
\\
&&\hspace*{199pt}{}\times\mathbb{Q}(dK_1).
\end{eqnarray*}
A symmetry argument shows (as before) that for each choice of $I,J$
such that
$|I|=k$, $|J|=l$ and $|I\cap J|=m$,
the preceding integral has the same value.
There are ${n\choose k}{k\choose m}{n-k\choose l-m}$
possible choices of $I,J$ with these properties. Thus, we obtain
\begin{eqnarray*}
\rho_{i,j} & =&\gamma\sum^\infty_{n=1}
\frac{1}{n!}\sum_{k=1}^{d-i} \sum
_{l=1}^{d-j}\sum
_{m=0}^{k\wedge l}\pmatrix{n
\cr
k}\pmatrix{k
\cr
m}
\pmatrix{n-k
\cr
l-m}\\
&&{}\times\idotsint\I\bigl\{y\in K_{k+1}^\circ\cap
\cdots \cap K_n^\circ\bigr\}
\\
&&\hspace*{47pt}{}\times \I\bigl\{z\in K_{1}^\circ\cap\cdots\cap
K_{k-m}^\circ\cap K_{k+l-m+1}^\circ\cap\cdots
\cap K_n^\circ\bigr\} \\
&&\hspace*{47pt}{}\times\Phi_i(K_1,
\ldots,K_k;dy)
\\
&&\hspace*{47pt}{}\times \Phi_j(K_{k+1-m},\ldots,K_{k+l-m};dz)\\
&&\hspace*{47pt}{}\times \Lambda
^{k+l-m-1}\bigl(d(K_1,\ldots ,K_{k+l-m-1})\bigr)
\\
&&\hspace*{47pt}{}\times \mathbb{Q}(dK_{k+l-m}) \Lambda^{n-(k+l-m)}(dK_{k+l-m+1},
\ldots,K_n)
\\
& =&\gamma\sum^\infty_{n=1}\sum
_{k=1}^{d-i}\sum_{l=1}^{d-j}
\sum_{m=0}^{k\wedge l} \frac{\I\{n\ge k+l-m\}\gamma
^{n-(k+l-m)}}{m!(k-m)!(l-m)!(n-(k+l-m))!}
\\
&&{}\times \idotsint\prod_{r=k+l-m+1}^n\I\bigl
\{x_r\in\bigl(K_r^\circ-y\bigr)\cap
\bigl(K_r^\circ -z\bigr)\bigr\}\, d x_{k+l-m+1}\cdots dx_n \\
&&\hspace*{98pt}{}\times\mathbb{Q}(dK_{k+l-m+1})
\cdots\mathbb{Q}(dK_n)
\\
&&\hspace*{98pt}{}\times \I\bigl\{y\in K_{k+1}^\circ\cap\cdots\cap
K_{k+l-m}^\circ\bigr\} \\
&&\hspace*{98pt}{}\times\I\bigl\{z\in K_{1}^\circ
\cap\cdots\cap K_{k-m}^\circ\bigr\} \Phi _i(K_1,
\ldots ,K_k;dy)
\\
&&\hspace*{98pt}{}\times \Phi_j(K_{k+1-m},\ldots,K_{k+l-m};dz)\\
&&\hspace*{98pt}{}\times
\Lambda^{k+l-m-1}\bigl(d(K_1,\ldots,K_{k+l-m-1})\bigr)
\mathbb{Q}(dK_{k+l-m}),
\end{eqnarray*}
and hence
\begin{eqnarray*}
\rho_{i,j} & =&\gamma\sum_{k=1}^{d-i}
\sum_{l=1}^{d-j}\sum
_{m=0}^{k\wedge l}\frac{1}{m!(k-m)!(l-m)!}\\
&&{}\times\iiiint\sum
_{n= k+l-m}^\infty\frac{ (\gamma C_d(y-z)
)^{n-(k+l-m)}}{(n-(k+l-m))!}
\\
&&\hspace*{82pt}{}\times\I\bigl\{y\in K_{k+1}^\circ\cap\cdots\cap
K_{k+l-m}^\circ\bigr\} \I\bigl\{z\in K_{1}^\circ
\cap\cdots\cap K_{k-m}^\circ\bigr\}
\\
&&\hspace*{82pt}{}\times \Phi_i(K_1,\ldots,K_k;dy)
\Phi_j(K_{k+1-m},\ldots,K_{k+l-m};dz)
\\
&&\hspace*{82pt}{}\times \Lambda^{k+l-m-1}\bigl(d(K_1,\ldots,K_{k+l-m-1})\bigr)
\mathbb{Q}(dK_{k+l-m})
\\
& =& \sum_{k=1}^{d-i}\sum
_{l=1}^{d-j}\sum_{m=0}^{k\wedge l}
\frac
{1}{m!(k-m)!(l-m)!} \int e^{\gamma C_d(x)} H_{i,j}^{k,l,m}(dx).
\end{eqnarray*}
This completes the proof of the theorem.
\end{pf*}

Some of the measures in \eqref{3.1neu} and \eqref{3.101} can be
expressed in terms of the
mixed moment measures
\[
M_{i,j}:=\BE\iint\I\bigl\{(y,z)\in\cdot\bigr\} \Phi_i(Z_0;dy)
\Phi_j(Z_0;dz),\qquad i,j\in\{1,\ldots,d\},
\]
and the functions $C_j\dvtx \R^d\rightarrow[0,\infty)$, $j\in\{1,\ldots
,d-1\}
$, defined by
\[
C_j(x):=\BE\Phi_j\bigl(Z_0;Z^\circ_0+x
\bigr),\qquad x\in\R^d.
\]

\begin{lemma}\label{limxed} Assume that \eqref{vi2} is satisfied. Then,
for any $i,j\in\{1,\ldots,d-1\}$,
%
\begin{eqnarray}
\label{101} H^{1}_{i,d}&=&\gamma\int\I\{y-z\in\cdot\}
M_{i,d}\bigl(d(y,z)\bigr),
\\
\label{102} H^{1,1,0}_{i,j}&=&\gamma^2 \int\I
\{y-z\in\cdot\}C_i(y-z) M_{j,d}\bigl(d(z,y)\bigr),
\\
\label{103} H^{1,1,1}_{i,j}&=&\gamma\int\I\{y-z\in\cdot\}
M_{i,j}\bigl(d(y,z)\bigr).
\end{eqnarray}
\end{lemma}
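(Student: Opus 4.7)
All three identities follow from direct manipulation of the definitions \eqref{3.1neu} and \eqref{3.101}, using the translation decomposition $\Lambda(dK_1)=\gamma\iint\delta_{K+x}(dK_1)\,dx\,\BQ(dK)$ together with the covariance relation \eqref{3.67a} for curvature measures.

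For \eqref{101} I specialise \eqref{3.1neu} to $k=1$ and observe that $\I\{z\in K_1\}\,dz=\Phi_d(K_1;dz)$, since $\Phi_d(K_1;\cdot)$ is Lebesgue measure restricted to $K_1$. The resulting integral $\gamma\iint\I\{y-z\in\cdot\}\,\Phi_i(K_1;dy)\,\Phi_d(K_1;dz)\,\BQ(dK_1)$ is then exactly $\gamma\int\I\{y-z\in\cdot\}\,M_{i,d}(d(y,z))$ by the definition of $M_{i,d}$.

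For \eqref{103} I take $k=l=m=1$ in \eqref{3.101}: the two interior-containment conditions index empty sets (hence are vacuous), and the $\Lambda$-integration has order $k+l-m-1=0$ and is trivial. Only $\gamma\iiint\I\{y-z\in\cdot\}\,\Phi_i(K_1;dy)\,\Phi_j(K_1;dz)\,\BQ(dK_1)$ remains, which is $\gamma\int\I\{y-z\in\cdot\}\,M_{i,j}(d(y,z))$ by the definition of $M_{i,j}$.

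For \eqref{102} the setting is $k=l=1$, $m=0$, so two grains $K_1,K_2$ appear with the constraints $y\in K_2^\circ$ and $z\in K_1^\circ$. I will first expand $\Lambda(dK_1)$ via the translation decomposition, then apply \eqref{3.67a} to transfer the translation $x$ from $\Phi_i(K+x;dy)$ onto the indicators. The substitution $w=y+x-z$ isolates $w$ in the first indicator and converts the interior conditions into $\{y-w\in K^\circ\}$ and $\{z+w\in K_2^\circ\}$. Since the $(K,y)$- and $(K_2,z)$-integrations become independent after conditioning on $w$, Fubini decouples them into the factors $C_i(w)=\BE\Phi_i(Z_0;Z_0^\circ+w)$ and $\BE\Phi_j(Z_0;Z_0^\circ-w)$, giving $H^{1,1,0}_{i,j}=\gamma^2\int\I\{w\in\cdot\}\,C_i(w)\,\BE\Phi_j(Z_0;Z_0^\circ-w)\,dw$. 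On the other hand, unfolding the right-hand side of \eqref{102} using $\Phi_d(Z_0;dy)=\I\{y\in Z_0\}\,dy$ and the same change of variables yields the analogous expression with $Z_0$ in place of $Z_0^\circ$. The main (minor) obstacle is this interior-versus-closure discrepancy, which I resolve by noting that the difference of the two factors is carried by $(\partial Z_0)-w$, and a Fubini computation gives $\int\BE\Phi_j(Z_0;(\partial Z_0)-w)\,dw=\BE[V_j(Z_0)\,\lambda_d(\partial Z_0)]=0$ since $\lambda_d(\partial Z_0)=0$ for every convex body; hence the two expressions agree as measures.
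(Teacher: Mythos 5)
Your proposal is correct and follows essentially the same route as the paper's (one-sentence) proof: read off \eqref{101} and \eqref{103} from the definitions, and for \eqref{102} expand $\Lambda$ via \eqref{Lambda}, apply the covariance property \eqref{3.67a} to shift the translation onto the indicators, change variables, and decouple by Fubini. The one genuinely non-obvious point you flag — that the left-hand side naturally produces $\BE\Phi_j(Z_0;Z_0^\circ-w)$ while unfolding $M_{j,d}$ gives $\BE\Phi_j(Z_0;Z_0-w)$ — is real, since $H^{1,1,0}_{i,j}$ is defined with $K^\circ$ but $\Phi_d(Z_0;\cdot)$ is Lebesgue measure on the closed body; your resolution, that the difference $\BE\Phi_j(Z_0;\partial Z_0-w)$ vanishes for a.e.\ $w$ because $\int\Phi_j(Z_0;\partial Z_0-w)\,dw = V_j(Z_0)\,\lambda_d(\partial Z_0)=0$ by Fubini and translation invariance, is correct and is a worthwhile detail that the paper silently absorbs into its ``easy calculation.''
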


\begin{pf} Equations \eqref{101} and \eqref{103}
follow directly from the definitions, while~\eqref{102}
follows from an easy calculation using the covariance property \eqref{3.67a}.
\end{pf}

In the next section, we will use the following consequences of Lemma~\ref{limxed}:
%
\begin{eqnarray}
\label{d-1,d} H_{d-1,d}&= &\gamma\int\I\{y-z\in\cdot\} M_{d-1,d}
\bigl(d(z,y)\bigr),
\\
\label{d-1,d-1}
H_{d-1,d-1}&= &\gamma^2 \int\I\{y-z\in
\cdot\}C_{d-1}(y-z) M_{d-1,d}\bigl(d(z,y)\bigr)
\nonumber
\\[-8pt]
\\[-8pt]
\nonumber
&&{}+\gamma\int\I\{y-z\in\cdot\} M_{d-1,d-1}\bigl(d(y,z)\bigr).
\end{eqnarray}

\section{Covariance structure in the isotropic case}\label{seciso}

In this section, we assume that the typical grain is \emph{isotropic},
that is, its distribution $\BQ$ is invariant under rotations and that
the moment assumption \eqref{vi2} is satisfied.
Our aim is to derive more explicit formulas
for the asymptotic covariances
%
\begin{equation}\qquad
\label{sigmaij} \sigma_{i,j}:= \lim_{r(W)\to\infty}
\frac{\C(V_i(Z\cap W),V_j(Z\cap W))}{V_d(W)},\qquad i,j\in\{0,\ldots,d\};
\end{equation}
confer the statement of Theorem~\ref{thm:variance}.

Using the iterated version of the local kinematic formula
(\cite{SW08}, Theorem~5.3.2), which is obtained by combining
\cite{SW08}, Theorem~6.4.1, (b) and (6.15) and \cite{SW08}, Theorem~6.4.2,
(6.20),
we get for $j\in\{0,\ldots,d-1\}$ and $l\in\{1,\ldots,d-j\}$ that
\begin{eqnarray*}
&&\gamma\iint\Phi_j\bigl(K_1,\ldots,K_{l},
\R^d\bigr) \Lambda^{l-1}\bigl(d(K_1,
\ldots,K_{l-1})\bigr) \BQ(dK_l)\\
&&\qquad =\mathop{\sum
^{d-1}_{m_1,\ldots,m_l=j}}_{ m_1+\cdots+m_l=(l-1)d+j} c^{d}_j
\prod^l_{i=1}c^{m_i}_d
\gamma v_{m_i},
\end{eqnarray*}
where, as in \cite{SW08}, (5.4),
\[
c^m_j:=\frac{m!\kappa_m}{j!\kappa_j},\qquad  m,j\in\{0,\ldots,d\}.
\]
Combining this with \eqref{3.9} and Theorem~\ref{tdeltaij}, and under
assumption
\eqref{condfull}, we deduce
%
\begin{equation}
\label{3.94} \rho_{0,j}=e^{\gamma v_d}P_j(\gamma
v_j,\ldots,\gamma v_{d-1}),\qquad j\in \{ 0,\ldots,d-1\},
\end{equation}
where $P_{j}$ (a multivariate polynomial on $\R^{d-j}$ of degree $d$)
is defined by
\[
P_{j}(t_j,\ldots,t_{d-1}) :=c^d_j
\sum^{d-j}_{l=1}\frac{1}{l!}\mathop{
\sum^{d-1}_{m_1,\ldots
,m_l=j}}_{ m_1+\cdots+m_l=(l-1)d+j} \prod
^l_{i=1}c^{m_i}_d
t_{m_i}.
\]

The following main result of this section shows that
the asymptotic covariances~\eqref{sigmaij} are linear
combinations of the numbers $\rho_{i,j}$ given by \eqref{rhoijlimit}.
To describe
the coefficients, we define
for any $j\in\{0,\ldots,d-1\}$ and $l\in\{j,\ldots,d\}$
a polynomial $P_{j,l}$ on $\R^{d-j}$ of degree $l-j$ by
%
\begin{equation}\qquad
\label{Polyjl} P_{j,l}(t_j,\ldots,t_{d-1}):=\I
\{l=j\}+c^l_j\sum^{l-j}_{s=1}
\frac{(-1)^s}{s!} \mathop{\sum^{d-1}_{m_1,\ldots,m_s=j}}_{ m_1+\cdots+m_s=sd+j-l}
\prod^s_{i=1}c^{m_i}_dt_{m_i}
\end{equation}
and complement this definition by $P_{d,d}:=1$.

\begin{theorem}\label{tsigmaij} Assume that the typical grain is
isotropic and suppose that~\eqref{vi2} holds.
Then
\[
\sigma_{i,j}=(1-p)^2\sum^{d}_{k=i}
\sum^{d}_{l=j}P_{i,k}(\gamma
v_i,\ldots,\gamma v_{d-1}) P_{j,l}(\gamma
v_j,\ldots,\gamma v_{d-1})\rho_{k,l},
\]
for all $i,j\in\{0,\ldots,d\}$.
\end{theorem}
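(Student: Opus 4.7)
The plan is to combine the Fock-space representation of $\sigma_{i,j}$ from \eqref{eqn:sigma} in Theorem \ref{thm:variance} with a structural identity, valid only under the isotropy assumption, that expresses the kernel $V_i^*$ as a linear combination of intrinsic volumes whose coefficients are exactly the polynomials $P_{i,k}$. Once this identity is available, substitution into \eqref{eqn:sigma} and comparison with the definition \eqref{rhoijlimit} of $\rho_{k,l}$ will yield the claim.

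The key technical step is to prove, in the isotropic case and for every $L \in \cK^d$, the identity
\begin{equation*}
V_i^*(L) = -(1-p) \sum_{k=i}^d P_{i,k}(\gamma v_i, \ldots, \gamma v_{d-1}) \, V_k(L).
\end{equation*}
To derive this I start from the decomposition $Z \cap L = \bigcup_{K \in \eta,\, K \cap L \neq \emptyset}(K \cap L)$, apply the inclusion--exclusion formula for additive functionals on the a.s.\ finite union, and take expectations via the multivariate Mecke formula for $\eta$ to get
\begin{equation*}
\E V_i(Z \cap L) = \sum_{n \geq 1} \frac{(-1)^{n+1}}{n!} \int V_i(L \cap K_1 \cap \ldots \cap K_n) \, \Lambda^n(d(K_1, \ldots, K_n)),
\end{equation*}
whose absolute convergence under \eqref{vi2} is ensured by Lemma \ref{lem:kinematic}. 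Under the isotropy of $\Q$, the iterated principal kinematic formula (repeated application of \cite[Theorem 5.3.2]{SW08}) evaluates each integrand as a polynomial in the $V_k(L)$'s with coefficients of the form $\gamma^n c^k_i \prod_r c^{m_r}_d v_{m_r}$ subject to $k + m_1 + \ldots + m_n = nd + i$ and $k, m_r \in \{i, \ldots, d\}$. Summing with the alternating weights $(-1)^{n+1}/n!$, collecting the indices $r$ with $m_r = d$ (whose total contribution collapses via $\sum_{t \geq 0}(-\gamma v_d)^t/t! = 1-p$), and reindexing the remaining indices by the integer $s$ should reorganize the residual alternating sum into exactly the structure of \eqref{Polyjl}.

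With this identity in hand, the rest is routine: substitute the expressions for $V_i^*$ and $V_j^*$ into \eqref{eqn:sigma}, interchange the two finite sums over $k \in \{i,\ldots,d\}$ and $l \in \{j,\ldots,d\}$ with the series and the integrals (legitimate by the absolute convergence noted after Theorem \ref{thm:variance} together with the nonnegativity of the intrinsic volumes), and recognise the inner double series as $\rho_{k,l}$ in the sense of \eqref{rhoijlimit}. The hard part will be the combinatorial reorganization in the derivation of the identity for $V_i^*$; one must match the alternating inclusion--exclusion series term-by-term to \eqref{Polyjl}. Useful consistency checks are the immediate relation $V_d^*(K) = -(1-p) V_d(K)$, corresponding to $P_{d,d} = 1$, and more generally $P_{i,i} = 1$, which ensures that the $V_i(L)$ piece subtracted in the definition of $V_i^*$ is correctly absorbed into the linear combination.
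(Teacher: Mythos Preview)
Your proposal is correct and follows essentially the same route as the paper: establish the identity $V_i^*(L)=-(1-p)\sum_{k=i}^d P_{i,k}(\gamma v_i,\ldots,\gamma v_{d-1})V_k(L)$ and then substitute it into \eqref{eqn:sigma} to recognise the $\rho_{k,l}$. The only difference is that the paper does not rederive this identity from scratch via inclusion--exclusion, Mecke and the iterated kinematic formula as you outline, but simply quotes it as the formula preceding Theorem~9.1.4 in \cite{SW08} (the finite-volume version of the Miles--Davy result); your sketch of its proof is accurate and the combinatorial reorganization you describe is exactly how that result is obtained.
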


\begin{pf} The formula preceding Theorem~9.1.4 in \cite{SW08}
is the finite volume version of the fundamental result
of \cite{Miles76} and \cite{Davy76} on the densities of
intrinsic volumes. Using this result, we obtain
for all $i\in\{0,\ldots, d-1\}$ and $A\in\cK^d$ that
%
\begin{equation}
\label{e4.56} \BE V_i(Z\cap A)-V_i(A)=-(1-p)\sum
^d_{k=i} V_k(A)P_{i,k}(
\gamma v_i,\ldots,\gamma v_{d-1}).
\end{equation}
For $i=d$, equation \eqref{e4.56} is a direct consequence of stationarity
and the definition $P_{d,d}=1$.
Using this formula in \eqref{eqn:sigma}, we obtain
the assertion from \eqref{rhoijlimit}.
\end{pf}

\begin{corol}\label{surfvol} Assume that \eqref{vi2} is satisfied. Then,
for $i,j\in\{d-1,d\}$, the assertions of Theorem~\ref{tsigmaij}
remain true in the general stationary case (without isotropy assumption).
Moreover,
\begin{eqnarray*}
\sigma_{d,d}&=&(1-p)^2\int \bigl(e^{\gamma C_d(x)}-1 \bigr)
\,dx,
\\
\sigma_{d-1,d}&=& -(1-p)^2 \gamma v_{d-1}\int
\bigl(e^{\gamma C_d(x)}-1 \bigr) \,dx \\
&&{}+(1-p)^2\gamma\int e^{\gamma C_d(x-y)}
M_{d-1,d}\bigl(d(x,y)\bigr).
\end{eqnarray*}
If, in addition, \eqref{condfull} holds, then
\begin{eqnarray*}
\sigma_{d-1,d-1}&=&(1-p)^2\gamma^2
v^2_{d-1}\int \bigl(e^{\gamma C_d(x)}-1 \bigr) \,dx
\\
&&{} +(1-p)^2\gamma^2\int e^{\gamma C_d(x-y)}
\bigl(C_{d-1}(x-y)-2v_{d-1}\bigr) M_{d-1,d}\bigl(d(y,x)
\bigr)
\\
& &{}+(1-p)^2\gamma\int e^{\gamma C_d(x-y)} M_{d-1,d-1}\bigl(d(x,y)
\bigr).
\end{eqnarray*}
\end{corol}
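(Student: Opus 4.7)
The plan is to specialise Theorem~\ref{tsigmaij} to indices $i,j\in\{d-1,d\}$ and to show that the isotropy assumption is not needed in this reduced range. The proof of Theorem~\ref{tsigmaij} used isotropy only to invoke the Miles--Davy identity \eqref{e4.56}, so the first task is to verify \eqref{e4.56} in the general stationary case for $i\in\{d-1,d\}$. For $i=d$ the identity reduces to $\BE V_d(Z\cap A)=pV_d(A)$, which is immediate from stationarity. For $i=d-1$ the identity follows from the translative principal kinematic formula (see \cite[Theorem~6.4.1]{SW08}), whose expression for $V_{d-1}(A\cap(K+x))$ involves no mixed translative functionals and hence does not require any rotation invariance of $\Q$. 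With \eqref{e4.56} available for $i\in\{d-1,d\}$, the derivation of Theorem~\ref{tsigmaij} carries over, restricted to the sum over $k,l\in\{d-1,d\}$.

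Next I would compute the relevant polynomials from \eqref{Polyjl}. A short check using the identity $c^d_{d-1}c^{d-1}_d=1$ (immediate from the definition of $c^m_j$) gives $P_{d,d}=P_{d-1,d-1}=1$ and $P_{d-1,d}(t_{d-1})=-t_{d-1}$. Substituting these into the formula of Theorem~\ref{tsigmaij} produces
\begin{align*}
\sigma_{d,d}&=(1-p)^2\rho_{d,d},\\
\sigma_{d-1,d}&=(1-p)^2\bigl(\rho_{d-1,d}-\gamma v_{d-1}\rho_{d,d}\bigr),\\
\sigma_{d-1,d-1}&=(1-p)^2\bigl(\rho_{d-1,d-1}-2\gamma v_{d-1}\rho_{d-1,d}+\gamma^2 v_{d-1}^2\rho_{d,d}\bigr).
\end{align*}
I would then replace $\rho_{d,d}$ by \eqref{3.11} and $\rho_{d-1,d},\rho_{d-1,d-1}$ by the integral representations \eqref{d-1,d} and \eqref{d-1,d-1} furnished by Theorem~\ref{tdeltaij} and Lemma~\ref{limxed}. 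The only minor manipulation needed for $\sigma_{d-1,d-1}$ is to merge the two contributions carrying $M_{d-1,d}$: using $C_d(x)=C_d(-x)$ and the relabelling of the two coordinates of $M_{d-1,d}$ one has $\int e^{\gamma C_d(x-y)}\,M_{d-1,d}(d(x,y))=\int e^{\gamma C_d(x-y)}\,M_{d-1,d}(d(y,x))$, so that the $-2v_{d-1}$-contribution merges with the $C_{d-1}$-integral to produce the factor $(C_{d-1}(x-y)-2v_{d-1})$. Assumption \eqref{condfull} is needed only for $\sigma_{d-1,d-1}$, since its representation rests on Theorem~\ref{tdeltaij} in the case $i=j=d-1$; by the last assertion of that theorem, the identities for $\rho_{d,d}$ and $\rho_{d-1,d}$ (both having $j=d$) are available without \eqref{condfull}.

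The main obstacle is the justification of \eqref{e4.56} for $i=d-1$ without isotropy. I expect this to reduce to the observation that the translative kinematic formula for $V_{d-1}(A\cap(K+x))$ is bilinear in the factors $V_{d-1}$ and $V_d$, so that after the standard Poissonian inclusion--exclusion (already used implicitly in the derivation of Miles--Davy in \cite{SW08}) the same closed-form expression arises as in the isotropic setting. Once this step is in place, the remainder of the corollary is algebraic bookkeeping combining the explicit coefficients above with the integral formulas from Theorem~\ref{tdeltaij} and Lemma~\ref{limxed}.
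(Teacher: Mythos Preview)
Your proposal is correct and follows essentially the same route as the paper: first observe that \eqref{e4.56} holds for $i\in\{d-1,d\}$ without isotropy (the paper cites the formula preceding Theorem~9.1.4 in \cite{SW08}, which amounts to the translative argument you describe), then compute $P_{d,d}=P_{d-1,d-1}=1$, $P_{d-1,d}(\gamma v_{d-1})=-\gamma v_{d-1}$, and finally insert \eqref{3.11}, \eqref{d-1,d}, \eqref{d-1,d-1} via Theorem~\ref{tdeltaij}. Your explicit remark about merging the two $M_{d-1,d}$-integrals via $C_d(-x)=C_d(x)$ and your observation that \eqref{condfull} is needed only for $\sigma_{d-1,d-1}$ (because Theorem~\ref{tdeltaij} covers $j=d$ without it) are exactly the details the paper leaves implicit.
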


\begin{pf} The formula preceding Theorem~9.1.4 in \cite{SW08}
does not require isotropy for $j=d-1$. Therefore, for $i,j\in\{d-1,d\}$,
the proof of Theorem~\ref{tsigmaij} applies without this assumption.

By definition \eqref{Polyjl}, $P_{d-1,d-1}=P_{d,d}=1$ and
$P_{d-1,d}(\gamma v_{d-1})=-\gamma v_{d-1}$. Therefore, we obtain from
Theorem~\ref{tsigmaij} that $\sigma_{d,d}=(1-p)^2\rho_{d,d}$,
$\sigma
_{d-1,d}=(1-p)^2 (\rho_{d-1,d}-\gamma v_{d-1}\rho_{d,d}
)$, and
\[
\sigma_{d-1,d-1} =(1-p)^2\bigl(\rho_{d-1,d-1}-\gamma
v_{d-1}\rho_{d-1,d} -\gamma v_{d-1}
\rho_{d,d-1}+\gamma^2 v^2_{d-1}
\rho_{d,d}\bigr).
\]
Inserting first \eqref{3.11}, \eqref{formularhoij} and then \eqref
{d-1,d} and
\eqref{d-1,d-1}, we obtain the result.
\end{pf}

Together with Corollary~\ref{surfvol} the next corollary
provides rather explicit formulas for the asymptotic covariance
in the two-dimensional isotropic case.

\begin{corol}\label{2d} Let $d=2$, assume that the typical grain is
isotropic, and suppose that \eqref{vi2} and \eqref{condfull} are
satisfied. Then
\begin{eqnarray*}
\sigma_{0,0}&=&(1-2p) (1-p)\gamma+(1-p) (2p-3)\frac{\gamma^2v_1^2}{\pi}
\\
&&{}+(1-p)^2 \biggl(\gamma-\frac{\gamma^2v_1^2}{\pi} \biggr)^2 \int
\bigl(e^{\gamma C_2(x)}-1 \bigr) \,dx
\\
&&{} +(1-p)^2\int\chi(x-y) M_{1,2}\bigl(d(y,x)\bigr)\\
&&{} +
\frac{4}{\pi^2}(1-p)^2\gamma^3 v^2_1
\int e^{\gamma C_2(x-y)} M_{1,1}\bigl(d(x,y)\bigr),
\\
\sigma_{0,1}&=&(1-p)^2\gamma v_1
+(1-p)^2 \biggl(\gamma^2v_1-
\frac{\gamma^3v_1^3}{\pi} \biggr)\int \bigl(e^{\gamma C_2(x)}-1 \bigr) \,dx
\\
&&{} +(1-p)^2\int\tilde{\chi}(x-y) M_{1,2}\bigl(d(y,x)\bigr)
\\
&&{}-(1-p)^2\frac{2\gamma^2 v_1}{\pi}\int e^{\gamma C_2(x-y)} M_{1,1}
\bigl(d(x,y)\bigr),
\\
\sigma_{0,2}&=&p(1-p) -(1-p)^2 \biggl(\gamma-
\frac{\gamma^2v_1^2}{\pi} \biggr)\int \bigl(e^{\gamma
C_2(x)}-1 \bigr) \,dx
\\
&&{} -(1-p)^2\frac{2\gamma^2 v_1}{\pi}\int e^{\gamma C_2(x-y)} M_{1,2}
\bigl(d(x,y)\bigr),
\end{eqnarray*}
where
\begin{eqnarray*}
\chi(z)&:=&e^{\gamma C_2(z)} \biggl(\frac{4\gamma^4 v^2_1}{\pi^2}\bigl(C_1(z)-v_1
\bigr)+\frac{4\gamma^3
v_1}{\pi
} \biggr),
\\
\tilde{\chi}(z)&:=& e^{\gamma C_2(z)} \biggl(\frac{3\gamma^3 v^2_1}{\pi}-
\frac{2\gamma^3 v_1}{\pi
}C_1(z)-\gamma ^2 \biggr).
\end{eqnarray*}
The formula for $\sigma_{0,2}$ remains true without assumption \eqref
{condfull}.
\end{corol}

\begin{pf} We have $P_{0,0}(t_0,t_1)=1$, $P_{0,1}(t_0,t_1)=-\frac
{2}{\pi}t_1$,
$P_{0,2}(t_0,t_1)=-t_0+\frac{1}{\pi}t_1^2$,
$P_{1,1}(t_1)=1$, $P_{1,2}(t_1)=-t_1$, and $P_{2,2}(t_1)=1$.
Moreover, we have\break $P_0(t_0,  t_1)=t_0+\frac{1}{\pi}t_1^2$ and
$P_1(t_1)=t_1$. Using \eqref{3.94}, Theorem~\ref{tdeltaij} and Lemma~\ref{limxed}, we obtain
\begin{eqnarray*}
\rho_{0,0}& =&e^{\gamma v_2} \biggl(\gamma+\frac{\gamma^2v_1^2}{\pi
}
\biggr), \qquad\rho_{0,1}=e^{\gamma v_2}\gamma v_1,\qquad
\rho_{0,2}= e^{\gamma v_2}-1,
\\
\rho_{1,1}&=& \gamma^2\int e^{\gamma C_2(y-z)}
C_1(y-z) M_{1,2}\bigl(d(z,y)\bigr) +\gamma\int
e^{\gamma C_2(y-z)} M_{1,1}\bigl(d(y,z)\bigr),
\\
\rho_{1,2}&=& \gamma\int e^{\gamma C_2(y-z)} M_{1,2}\bigl(d(y,z)
\bigr),\qquad \rho_{2,2}= \int \bigl(e^{\gamma C_2(x)}-1 \bigr) \,dx.
\end{eqnarray*}
The result follows by substituting these expressions into Theorem~\ref
{tsigmaij}.
\end{pf}

The proof of Theorem~\ref{tsigmaij} also yields the following
nonasymptotic result for which definition \eqref{ijW} should be
recalled. The case $d=2$ is further discussed in Appendix B of \cite{HLSarxiv}.

\begin{theorem}\label{tsigmaijW}
Assume that the typical grain is
isotropic and that \eqref{vi2} holds.
Let $W\in\cK^d$ and $i,j\in\{0,\ldots,d\}$.
Then
\begin{eqnarray*}
&&\C\bigl(V_i(Z\cap W),V_j(Z\cap W)\bigr)\\
&&\qquad=(1-p)^2
\sum^{d}_{k=i}\sum
^{d}_{l=j}P_{i,k}(\gamma v_i,
\ldots,\gamma v_{d-1}) P_{j,l}(\gamma v_j,\ldots,
\gamma v_{d-1})\rho_{k,l}(W).
\end{eqnarray*}
\end{theorem}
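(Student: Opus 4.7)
The plan is to apply the Fock space representation \eqref{2.8} directly to $f=f_{V_i,W}$ and $g=f_{V_j,W}$, exactly as in the proof of Theorem \ref{thm:variance}, but without passing to the limit. The square integrability required by \eqref{2.8} follows from \eqref{smfinite} in Lemma \ref{lem:boundintrinsic}, since each intrinsic volume is a geometric functional and \eqref{vi2} is in force.

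First I would combine \eqref{Tn} of Lemma \ref{lem:boundintrinsic} with the isotropic Miles--Davy identity \eqref{e4.56}, applied to the convex body $A=K_1\cap\ldots\cap K_n\cap W$, to obtain
\begin{align*}
\BE D^n_{K_1,\ldots,K_n}f_{V_i,W}(\eta)
&=(-1)^n V_i^*(K_1\cap\ldots\cap K_n\cap W)\\
&=(-1)^{n+1}(1-p)\sum_{k=i}^d P_{i,k}(\gamma v_i,\ldots,\gamma v_{d-1})\,V_k(K_1\cap\ldots\cap K_n\cap W),
\end{align*}
and analogously for $V_j$ with the sum running over $l$ from $j$ to $d$. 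Note that for the cases $i=d$ or $j=d$ the identity \eqref{e4.56} is built into the convention $P_{d,d}=1$ together with stationarity, so no separate treatment is needed.

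Substituting both expressions into \eqref{2.8}, the factor $(-1)^{n+1}\cdot(-1)^{n+1}=1$ drops out, and I would pull the finite double sum over $k\in\{i,\ldots,d\}$ and $l\in\{j,\ldots,d\}$ outside the integral and outside the sum over $n$. This yields
\begin{align*}
\C(V_i(Z\cap W),V_j(Z\cap W))
&=(1-p)^2\sum_{k=i}^d\sum_{l=j}^d P_{i,k}(\gamma v_i,\ldots,\gamma v_{d-1})\,P_{j,l}(\gamma v_j,\ldots,\gamma v_{d-1})\\
&\qquad \times \sum_{n=1}^\infty\frac{1}{n!}\int V_k(K_1\cap\ldots\cap K_n\cap W)\,V_l(K_1\cap\ldots\cap K_n\cap W)\,\Lambda^n(d(K_1,\ldots,K_n)),
\end{align*}
and the inner double series is precisely $\rho_{k,l}(W)$ by definition \eqref{ijW}.

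The only nontrivial point is to justify the interchange of the finite sum with the infinite series over $n$ and the integral against $\Lambda^n$. Since the sum over $k,l$ is finite this reduces to absolute convergence of each individual series $\rho_{k,l}(W)$, which follows from Lemma \ref{lem:kinematic} applied with $A=W$ together with \eqref{vi2} (the argument is exactly the one giving \eqref{eq:majorant} in the proof of Theorem \ref{thm:variance}, and in fact the remark following that theorem observes that \eqref{2.8} converges with absolute values). Thus the interchange is legitimate, and the stated formula follows. I do not anticipate any genuine obstacle beyond this bookkeeping, since all the geometric content is already packaged into \eqref{e4.56} and the definition of $\rho_{k,l}(W)$.
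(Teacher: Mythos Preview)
Your proposal is correct and follows exactly the approach the paper has in mind: the paper merely states that ``the proof of Theorem \ref{tsigmaij} also yields'' Theorem \ref{tsigmaijW}, meaning one plugs the Miles--Davy identity \eqref{e4.56} into the Fock space representation \eqref{2.8} (via \eqref{Tn}) rather than into its limiting form \eqref{eqn:sigma}, and then recognizes the inner series as $\rho_{k,l}(W)$. Your justification of the interchange via Lemma \ref{lem:kinematic} is the right finishing touch and is implicit in the paper's treatment.
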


\section{The spherical Boolean model}
\label{secspherical}

In this section, we show how some of the formulas of Section~\ref{seciso} can be used to
determine explicitly the covariances of a stationary and isotropic
Boolean model
whose typical grain is the unit ball $B^d$. In this particular case,
we get from Corollary~\ref{surfvol} that
\begin{eqnarray*}
&&\sigma_{d-1,d}=(1-p)^2\gamma \biggl[-v_{d-1}\int
\bigl(e^{\gamma C_d(x)}-1\bigr) \,dx\\
&&\hspace*{95pt}{}+\frac{1}{2}\int_{{\S}^{d-1}}
\int_{B^d}e^{\gamma C_d(x-y)} \,dy \mathcal {H}^{d-1}(dx)
\biggr],
\end{eqnarray*}
where $C_d(x)=V_d(B^d\cap(B^d+x))$ and $\mathcal{H}^j$ denotes the
$j$-dimensional Hausdorff measure. Clearly, $\bar{C}_d(t):=V_d(B^d\cap
(B^d+tv))$, for $t\ge0$ and $v\in\S^{d-1}$, is independent of the
choice of the unit vector $v$ and
\begin{eqnarray*}
\bar{C}_d(t)&=&2\kappa_{d-1}\int_{{t}/{2}}^1
\sqrt{1-u^2}^{d-1} \,du \\
&=&2\frac{\pi^{{(d-1)}/{2}}}{\Gamma ({(d+1)}/{2}
)}\int
_{{t}/{2}}^1\sqrt{1-u^2}^{d-1}
\,du,\qquad t\in[0,2].
\end{eqnarray*}
Introducing polar coordinates, we get
\begin{eqnarray*}
F_d(\gamma)&:=&v_{d-1}\int\bigl(e^{\gamma C_d(x)}-1\bigr) \,dx
=v_{d-1} \,d\kappa_d\int_0^2
\bigl(e^{\gamma\bar{C}_d(t)}-1\bigr)t^{d-1} \,dt\\
&=:&v_{d-1}
f_d(\gamma),
\end{eqnarray*}
where $v_{d-1}=d\kappa_d/2$.
On the other hand, for an arbitrary unit vector $v\in\S^{d-1}$, by the
rotation invariance of $B^d$ we get
\[
G_d(\gamma):=\frac{1}{2}\int_{{\S}^{d-1}}\int
_{B^d}e^{\gamma C_d(x-y)} \,dy \mathcal{H}^{d-1}(dx)
=v_{d-1} \int_{B^d}e^{\gamma C_d(v-y)} \,dy.
\]
We parameterize $y$ in the form
\[
y=(1-t)v+\sqrt{1-(1-t)^2}s w,\qquad t\in[0,2], s\in[0,1], w\in
v^\perp \cap \S^{d-1},
\]
and hence we obtain
%
\begin{eqnarray}
\label{Gstar} G_d(\gamma) &=&v_{d-1} (d-1)
\kappa_{d-1} \int_{0}^2\int
_0^1\exp \bigl(\gamma \bar {C}_d
\bigl(\sqrt{(2-t)^2+t(2-t)s^2} \bigr)
\bigr)\nonumber\\
&&\hspace*{106pt}{}\times s^{d-2}\sqrt{t(2-t)}^{d-1} \,ds \,dt
\nonumber
\\
&=:&v_{d-1} g_d(\gamma).\nonumber
\end{eqnarray}
Therefore, we have
\[
\sigma_{d-1,d}=(1-p)^2\gamma v_{d-1}
\bigl(-f_d(\gamma)+g_d(\gamma ) \bigr),
\]
which shows that the sign of the covariance $\sigma_{d-1,d}$ is
completely determined by the sign of the
function $g_d-f_d$.

It is preferable to plot the covariances as functions of the intensity.
Here, we get
\[
\sigma_{d-1,d}(\gamma)=\gamma e^{-2\kappa_d\gamma}v_{d-1} \bigl(
g_d(\gamma)-f_d(\gamma) \bigr).
\]
Figure~\ref{covcolorall} shows the result for various dimensions.

\begin{figure}[b]

\includegraphics{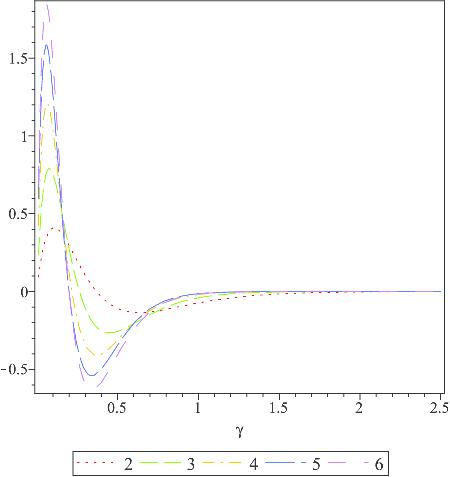}

\caption{$\sigma_{d-1,d}(\gamma)$ for $d=2, \ldots, d=6$.}\label
{covcolorall}
\end{figure}

Next, we determine the correlation coefficient $\cor_{d-1,d}(\gamma)$,
as a function of the intensity $\gamma$. For this, we also have to
determine explicitly
$\sigma_{d,d}$ and $\sigma_{d-1,d-1}$, which requires some further
calculations. First, we have
\[
\sigma_{d,d}=(1-p)^2\int\bigl(e^{\gamma C_d(x)}-1\bigr)
\,dx=(1-p)^2 f_d(\gamma),
\]
hence $\sqrt{\sigma_{d,d}}=(1-p)\sqrt{f_d(\gamma)}$; second,
\begin{eqnarray*}
\sigma_{d-1,d-1}&=&(1-p)^2\gamma^2 \biggl[
(v_{d-1} )^2f_d(\gamma )\\
&&\hspace*{54pt}{}+\int
e^{\gamma C_d(x-y)}C_{d-1}(x-y) M_{d-1,d}\bigl(d(y,x)\bigr)
\\
&&\hspace*{54pt}{} -2v_{d-1} \int e^{\gamma C_d(x-y)} M_{d-1,d}
\bigl(d(y,x)\bigr)\\
&&\hspace*{89pt}{}+
\frac{1}{\gamma
}\int e^{\gamma C_d(x-y)} M_{d-1,d-1}\bigl(d(x,y)\bigr)
\biggr].
\end{eqnarray*}
Since $C_{d-1}(x)$ depends only on $\|x\|$, we denote it by $\bar
{C}_{d-1}(\|x\|)$. For $0<\|x\|\le2$, we then get
\[
\bar{C}_{d-1}\bigl(\|x\|\bigr)=\frac{1}{2}\mathcal{H}^{d-1}
\bigl({\S}^{d-1}\cap \bigl(B^d+x\bigr)\bigr)=
\frac{1}{2}(d-1) \kappa_{d-1}\int_{{\|x\|}/{2}}^1
\sqrt{1-s^2}^{d-3} \,ds.
\]
Let $v\in{\S}^{d-1}$ be fixed. Then, arguing as in the derivation of
\eqref{Gstar}, we obtain
\begin{eqnarray*}
&&\int e^{\gamma C_d(x-y)}C_{d-1}(x-y) M_{d-1,d}\bigl(d(y,x)\bigr)\\
&&\qquad
=v_{d-1}\int_{B^d}e^{\gamma C_d(x-v)}C_{d-1}(x-v)
\,dx
\\
&&\qquad=v_{d-1}(d-1)\kappa_{d-1}\int_{0}^2
\int_0^1 s^{d-2}\sqrt
{t(2-t)}^{d-1}\\
&&\hspace*{139pt}{}\times \exp \bigl(\gamma\bar{C}_d \bigl(\sqrt
{(2-t)^2+t(2-t)s^2} \bigr) \bigr)
\\
&&\hspace*{139pt}{}\times \bar{C}_{d-1} \bigl(\sqrt{(2-t)^2+t(2-t)s^2}
\bigr) \,ds \,dt
\\
&&\qquad=:v_{d-1}(d-1)\kappa_{d-1}h_d(\gamma).
\end{eqnarray*}
Furthermore, we have
\[
2v_{d-1}\int e^{\gamma C_d(x-y)} M_{d-1,d}\bigl(d(x,y)
\bigr)=2(v_{d-1})^2 g_d(\gamma)=
\frac{(d\kappa_d)^2}{2} g_d(\gamma).
\]
Finally, since
\[
M_{d-1,d-1}=\frac{1}{4}\int_{{\S}^{d-1}}\int
_{{\S}^{d-1}}\mathbf {1}\bigl\{ (y,z)\in\cdot\bigr\}
\mathcal{H}^{d-1}(dy) \mathcal{H}^{d-1}(dz),
\]
we get (with an arbitrary unit vector $v_0$)
\begin{eqnarray*}
&&\int e^{\gamma C_d(x-y)} M_{d-1,d-1}\bigl(d(x,y)\bigr)
\\
&&\qquad=\frac{d\kappa_d}{4}\int_{{\S}^{d-2}}\int_0^{\pi}
\exp \bigl(\gamma C_d \bigl(v_0- [\cos\theta
v_0+\sin\theta v ] \bigr) \bigr)\sin^{d-2}\theta \,d\theta
\mathcal{H}^{d-2}(dv)
\\
&&\qquad=\frac{d\kappa_d(d-1)\kappa_{d-1}}{4}\int_0^\pi
\sin^{d-2}\theta \exp \bigl(\gamma\bar{C}_d \bigl(\sqrt{2(1-
\cos\theta)} \bigr) \bigr) \,d\theta
\\
&&\qquad=\frac{d\kappa_d(d-1)\kappa_{d-1}}{4}\int_{0}^2
\sqrt{s(2-s)}^{d-3} \exp \bigl(\gamma\bar{C}_d \bigl(
\sqrt{2(2-s)} \bigr) \bigr) \,d s \\
&&\qquad=:\frac{d\kappa_d(d-1)\kappa_{d-1}}{4}k_d(\gamma).
\end{eqnarray*}
Hence, we have
\begin{eqnarray*}
\frac{\sigma_{d-1,d-1}}{(1-p)^2\gamma^2}&=& \biggl(\frac{d\kappa
_d}{2} \biggr)^2f_d(
\gamma)-\frac{(d\kappa_d)^2}{2}g_d(\gamma)+ \frac{d\kappa_d(d-1)\kappa_{d-1}}{2}
h_d(\gamma)\\
&&{}+\frac{d\kappa
_d(d-1)\kappa_{d-1}}{4\gamma}k_d(\gamma).
\end{eqnarray*}
This finally implies that
\begin{eqnarray*}
&&\cor_{d-1,d}(\gamma)\\
&&\qquad=\biggl(\frac{d\kappa_d}{2} \bigl(g_d(\gamma
)-f_d(\gamma) \bigr)\biggr)\\
&&\qquad\quad{}\Big/\biggl(\sqrt{f(\gamma)}\biggl(\biggl(\frac{d\kappa
_d}{2} \biggr)^2f_d(\gamma)-\frac{(d\kappa_d)^2}{2}g_d(\gamma)\\
&&\hspace*{87pt}{}+
\frac{d\kappa_d(d-1)\kappa_{d-1}}{2} h_d(\gamma)+\frac{d\kappa
_d(d-1)\kappa_{d-1}}{4\gamma}k_d(\gamma)\biggr)^{1/2}\biggr).
\end{eqnarray*}
From these considerations, we also deduce the plausible fact that
\[
\lim_{\gamma\downarrow0} \cor_{d-1,d}(\gamma)=\lim
_{\gamma
\downarrow
0}\frac{({1}/{2})d\kappa_d\kappa_d}{\sqrt{\gamma\int C_d(x) \,dx}
\sqrt{({1}/{\gamma}) ({d\kappa_d}/{2} )^2}}=1,
\]
which is confirmed by our numerical calculations. Plots of $\cor
_{d-1,d}(\cdot)$ for $d=2,\ldots,6$ are shown in Figure~\ref{figcorr2}.

\begin{figure}

\includegraphics{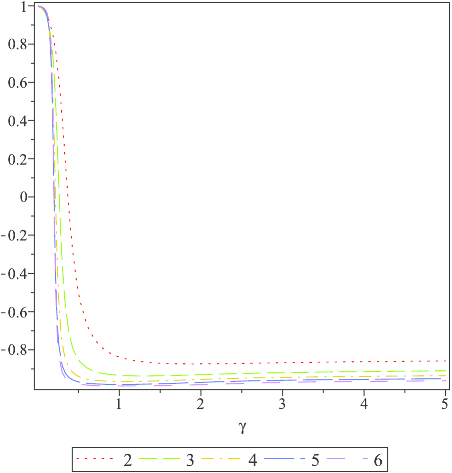}

\caption{$\cor_{d-1,d}(\gamma)$ for $d=2,\ldots,6$.}\label{figcorr2}
\end{figure}

In a similar way, the formulas from Corollary~\ref{2d} can be specified
in the case of a planar Boolean model
with the unit circle as deterministic typical grain. Then we have
\begin{eqnarray*}
\chi(r,\gamma)&=&4\gamma^3 e^{\gamma\bar C_2(r)} \bigl(\gamma\bar
C_1(r)-\pi\gamma+1 \bigr) \quad\mbox{and}\\
\tilde \chi(r,\gamma)&=&
\gamma^2 e^{\gamma\bar C_2(r)} \bigl(3\pi\gamma-2 \bar C_1(r)
\gamma-1 \bigr),
\end{eqnarray*}
and, for instance,
\begin{eqnarray*}
\sigma_{0,0}(\gamma)&=&(1-2p) (1-p)\gamma+(1-p) (2p-3)
\gamma^2\pi\\
&&{} +(1-p)^2\gamma^2(1-\pi
\gamma)^2f_2(\gamma)
\\
&&{} +(1-p)^22\pi\int_0^2\int
_0^1\chi \bigl(\sqrt {(2-t)^2+t(2-t)s^2},
\gamma \bigr) \sqrt{t(2-t)} \,ds \,dt
\\
&&{} +(1-p)^2\gamma^3 4\pi\int_0^{\pi}
\exp \bigl(\gamma\bar C_2 \bigl(\sqrt{2\bigl(1-\cos(t)\bigr)} \bigr)
\bigr) \,dt,
\end{eqnarray*}
where $p=p(\gamma)=1-e^{-\pi\gamma}$. Moreover,
\begin{eqnarray*}
\sigma_{0,1}(\gamma)&=&(1-p)^2\gamma\pi+(1-p)^2
\gamma^2\pi(1-\pi \gamma )f_2(\gamma)
\\
&&{} +(1-p)^22\pi\int_0^2\int
_0^1\tilde\chi \bigl(\sqrt {(2-t)^2+t(2-t)s^2},
\gamma \bigr)\sqrt{t(2-t)} \,ds \,dt
\\
& &{}- (1-p)^2\gamma^2 2\pi\int_0^{\pi}
\exp \bigl(\gamma\bar C_2 \bigl(\sqrt{2\bigl(1-\cos(t)\bigr)} \bigr)
\bigr) \,dt,
\\
\sigma_{0,2}(\gamma)&=&p(1-p)-(1-p)^2\gamma(1-\pi
\gamma)f_2(\gamma )-(1-p)^22\gamma^2\pi
g_2(\gamma),
\\
\sigma_{2,2}(\gamma)&=&(1-p)^2f_2(\gamma).
\end{eqnarray*}
The variances and covariances as well as the correlation functions for
the planar case are plotted in Figures~\ref{figcorr3} and~\ref{figcorr4}.

\begin{figure}

\includegraphics{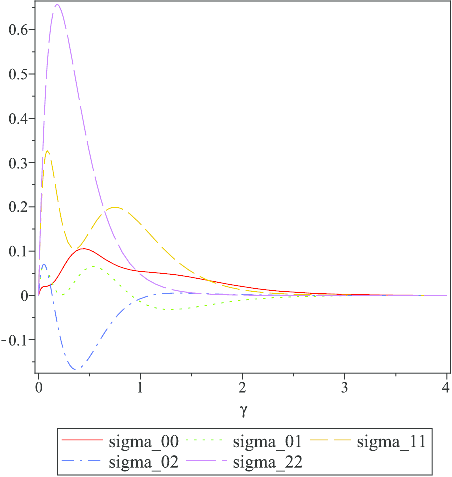}

\caption{Variances/covariances for $d=2$.}\label{figcorr3}
\end{figure}

\begin{figure}

\includegraphics{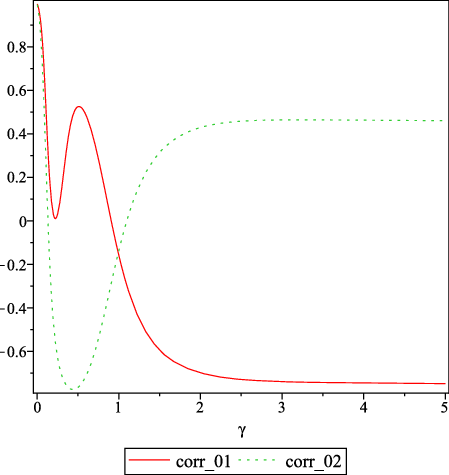}

\caption{Correlation functions for $d=2$.}\label{figcorr4}
\end{figure}

\section{Normal approximation via the Malliavin--Stein method}
\label{secclt}

In this section, we prepare the central limit theorems for geometric
functionals of a
Boolean model by proving a general result on the normal approximation
of Poisson functionals. Our approach is based on recent
findings in \cite{Peccatietal2010,PeccatiZheng2010} and uses similar
arguments as in \cite{ReitznerSchulte2011}.

Throughout this section, let $\eta$ be a Poisson process on a
measurable space $(\BX,{\mathcal X})$ with a
$\sigma$-finite intensity measure $\lambda$;
see \cite{Kallenberg}, Chapter~12.
Consider a $[-\infty,\infty]$-valued
random variable $F$ such that $\BP(|F|<\infty)=1$
and $F=f(\eta)$
$\BP$-a.s. for some measurable $f\dvtx \bN\rightarrow\R$. Any such $f$ is
called a
\emph{representative} of the \emph{Poisson functional} $F$.
If $f$ is a (fixed) representative of $F$, we define
\[
D^n_{x_1,\ldots,x_n}F:=D^n_{x_1,\ldots,x_n}f(\eta), \qquad n\in
\N, x_1,\ldots ,x_n\in\BX,
\]
where $D^n$ is the $n$th iterated difference operator used in Section~\ref{sec:3}.
If $\tilde{f}$ is another representative of $F$, then
the multivariate Mecke equation (see, e.g., \cite{LaPe11}, (2.10))
implies that
$D^n_{x_1,\ldots,x_n}f(\eta)=D^n_{x_1,\ldots,x_n}\tilde{f}(\eta)$
$\BP$-a.s. and for $\lambda^n$-a.e. $(x_1,\ldots,x_n)\in\BX^n$.
Let $L^2_\eta$ denote the space of all Poisson functionals $F$ such
that $\BE F^2<\infty$.
For $F\in L^2_\eta$ we define $f_n\dvtx \BX^n\to\R$ by
\[
f_n(x_1,\ldots,x_n)=\frac{1}{n!}\E
D^n_{x_1,\ldots,x_n}F.
\]
It was shown in \cite{LaPe11}, Theorem~1.1, that $f_n$ belongs to the space
$L^2_s(\lambda^n)$ of $\lambda^n$-almost everywhere symmetric functions
on $\BX^n$ that are
square-integrable with respect to $\lambda^n$.
Now the Fock space representation (see \cite{LaPe11}, Theorem~1.1)
tells us that
%
\begin{equation}
\label{Fock} \V F=\sum_{n=1}^\infty n!
\|f_n\|_n^2,
\end{equation}
where $\|\cdot\|_n$ denotes the norm in $L^2(\lambda^n)$.
Moreover, it is known from \cite{LaPe11}, Theorem~1.3, that $F$ has the
representation
%
\begin{equation}
\label{eqn:WienerItoChaos} F=\E F + \sum_{n=1}^\infty
I_n(f_n),
\end{equation}
where $I_n(\cdot)$ stands for the $n$th multiple Wiener--It\^o integral,
and the right-hand side converges in $L^2(\P)$.
The identity \eqref{eqn:WienerItoChaos} is called Wiener--It\^o chaos expansion
of $F$. The multiple Wiener--It\^o integrals are defined
for square integrable symmetric functions and are orthogonal in the
sense that
\[
\E I_n(f) I_m(g) = \cases{ n! \langle f,g
\rangle_n, &\quad $n=m$,\vspace*{2pt}
\cr
0, &\quad  $n\neq m$,}
\]
for $f\in L_s^2(\lambda^n)$, $g\in L^2_s(\lambda^m)$, and $n,m\in\N
$, where
$\langle\cdot,\cdot\rangle_n$ denotes the scalar product in
$L^2(\lambda^n)$.

If the condition
%
\begin{equation}
\label{eqn:domD} \sum_{n=1}^\infty n n!
\|f_n\|_n^2 <\infty
\end{equation}
is satisfied, the difference operator \eqref{addone} has the representation
%
\begin{equation}
\label{Dx} D_x F=\sum_{n=1}^\infty
n I_{n-1}\bigl(f_n(x,\cdot)\bigr)
\end{equation}
$\P$-a.s. for $\lambda$-a.e. $x\in\BX$ (see, e.g., \cite{LaPe11}, Theorem~3.3). From now on, we write $F\in\dom D$ if $F\in L^2_\eta$
satisfies \eqref{eqn:domD}. The Ornstein--Uhlenbeck generator
associates with any Poisson functional $F\in L^2_\eta$ such that $\sum_{n=1}^\infty n^2 n! \|f_n\|_n^2<\infty$ the random variable
\[
L F= -\sum_{n=1}^{\infty} n
I_n(f_n),
\]
and its pseudo-inverse is given by
%
\begin{equation}
\label{eqn:L-1} L^{-1} F = -\sum_{n=1}^\infty
\frac{1}{n} I_n(f_n)
\end{equation}
for $F\in L^2_\eta$. These operators together with the difference
operator and the Skorohod integral, which is not used in this paper,
are called Malliavin operators. Combining \eqref{Dx} and \eqref
{eqn:L-1}, we see that
%
\begin{equation}
\label{DxL} D_x L^{-1} F = -\sum
_{n=1}^\infty I_{n-1}\bigl(f_n(x,
\cdot)\bigr)
\end{equation}
$\P$-a.s. for $\lambda$-a.e. $x\in\BX$. More details on the
Wiener--It\^o chaos expansion
and the Malliavin operators can be found in \cite{LaPe11} and the
references therein. In \cite{Peccatietal2010,PeccatiZheng2010},
the Malliavin operators and Stein's method are combined to derive
bounds for the normal approximation of Poisson functionals. In the following,
we evaluate bounds obtained by this technique, which
is called the Malliavin--Stein method.

To measure the distance between two real-valued random variables $Y_1,Y_2$,
we use the Wasserstein distance that is given by
\[
\mathbf{d}_W(Y_1,Y_2)=\sup
_{h\in\Lip(1)}\bigl|\E h(Y_1)-\E h(Y_2)\bigr|.
\]
Here, $\Lip(1)$ stands for the set of all functions $h\dvtx \R\to\R$
with a Lipschitz constant less than or equal to one. For two
$m$-dimensional random vectors $Y_1,Y_2$, we define
\[
\mathbf{d}_3(Y_1,Y_2)=\sup
_{h\in{\mathcal H}}\bigl|\E h(Y_1)-\E h(Y_2)\bigr|,
\]
where ${\mathcal H}$ is the set of all three times continuously differentiable
functions $h\dvtx \R^m\to\R$ such that
\[
\max_{i,j=1,\ldots,m} \sup_{x\in\R^m} \biggl\llvert
\frac{\partial^2 h}{\partial x_i\, \partial x_j}(x)\biggr\rrvert \leq1
\quad \mbox{and}\quad \max_{i,j,k=1,\ldots,m}
\sup_{x\in\R^m} \biggl\llvert \frac{\partial^3 h}{\partial x_i\, \partial x_j\, \partial
x_k}(x)\biggr\rrvert
\leq1.
\]
Convergence in the Wasserstein distance or in the
$\mathbf{d}_3$-distance implies convergence in distribution.

In the following, we establish an upper bound for the $\mathbf
{d}_3$-distance between a Gaussian random vector and
a random vector $F=(F^{(1)},\ldots,F^{(m)})$ of Poisson functionals
$F^{(1)},\ldots,F^{(m)}\in L^2_\eta$.
Each of these components has a Wiener--It\^o chaos expansion
\[
F^{(k)}=\E F^{(k)}+\sum_{n=1}^\infty
I_n\bigl(f_n^{(k)}\bigr)
\]
with $f^{(k)}_n\in L_s^2(\lambda^n)$, $n\in\N$. We also state a bound
for the Wasserstein distance between the normalization
of a Poisson functional $F$ and a standard Gaussian random variable.

We need to introduce some notation. Consider functions
$g_1\dvtx \BX^{n_1} \to\R$ and $g_2\dvtx \BX^{n_2} \to\R$, where
$n_1,n_2\in\N$.
The tensor product $g_1\otimes g_2$ is the function on $\BX^{n_1+n_2}$
which maps each $(x_1,\ldots,x_{n_1+n_2})$ to
$g_1(x_1,\ldots,x_{n_1})g_2(x_{n_1+1},\ldots,\break x_{n_1+n_2})$.
This definition can be iterated in the obvious way.
Fix two integers $i,j\ge1$ and consider functions
$f\dvtx \BX^i\to\R$ and $g\dvtx \BX^j\to\R$. Let $\sigma$ be a partition
of $I_{ij}:=\{1,\ldots,2i+2j\}$ and let $|\sigma|$ be the number of blocks
(i.e., the disjoint sets constituting the partition) of $\sigma$.
The function
$(f\otimes f\otimes g \otimes g)_\sigma\dvtx  \BX^{|\sigma|}\rightarrow
\R$
is defined by replacing all variables whose indices belong to the same
block of $\sigma$
by a new common variable.
Let $\pi=\{J_1,\ldots,J_4\}$ be the partition of $I_{ij}$ into the sets
$J_1:=\{1,\ldots,i\}$, $J_2:=\{i+1,\ldots,2i\}$, $J_3:=\{2i+1,\ldots
,2i+j\}$,
and $J_4:=\{2i+j+1,\ldots,2i+2j\}$.
Let $\Pi_{ij}$ be the set of all partitions $\sigma$ of $I_{ij}$ such that
$|J\cap J'|\le1$ for all $J\in\pi$ and all $J'\in\sigma$.
By $\widetilde{\Pi}_{ij}$ we denote the set of all partitions $\sigma
\in
\Pi_{ij}$ such that:
\begin{longlist}[(iii)]
\item[(i)] $\{1,2i+1\},\{i+1,2i+j+1\}\in\sigma$ or $\{
1,i+1,2i+1,2i+j+1\}\in\sigma$;
\item[(ii)] each block of $\sigma$ has at least two elements;
\item[(iii)] for every partition of $\{1,2,3,4\}$ in two disjoint
nonempty sets $M_1,M_2$
there are $u\in M_1$, $v\in M_2$ such that $J_u$ and $J_v$ are both
intersected by
one block of~$\sigma$.
\end{longlist}
Let $\widetilde{\Pi}^{(1)}_{ij}$ (resp., $\widetilde{\Pi}^{(2)}_{ij}$)
be the set of all partitions $\sigma\in\widetilde{\Pi}_{ij}$ such that
$\{1,2i+1\},\{i+1,2i+j+1\}\in\sigma$ (resp. $\{1,i+1,2i+1,2i+j+1\}
\in
\sigma$). In the terminology of diagram formulae as it is used in
\cite{PeccatiTaqqu2010},
Chapter~4, condition (iii) means that $\pi$ and
$\sigma$ generate a ``connected diagram.''

Now we are able to state the main result of this section.

\begin{theorem}\label{thm:abstract}
Assume that $F^{(k)}\in L^2_\eta$ and
%
\begin{equation}
\label{eqn:assumptionintegrability} \int\bigl|\bigl(f_i^{(k)}\otimes
f_i^{(k)}\otimes f_j^{(l)}\otimes
f_j^{(l)}\bigr)_\sigma\bigr| \,d\lambda^{|\sigma|} <
\infty
\end{equation}
for all $\sigma\in\Pi_{ij}$, $i,j\in\N$, and $k,l\in\{1,\ldots,
m\}$.
Further, assume
that there are $a>0$ and $b\geq1$ such that
%
\begin{equation}
\label{eqn:assumptionbounds} \int\bigl|\bigl(f_i^{(k)}\otimes
f_i^{(k)}\otimes f_j^{(l)}\otimes
f_j^{(l)}\bigr)_\sigma\bigr| \,d\lambda^{|\sigma|}
\leq\frac{a b^{i+j}}{(i!)^2(j!)^2}
\end{equation}
for all $\sigma\in\widetilde{\Pi}_{ij}$, $i,j\in\N$, and $k,l \in
\{
1,\ldots, m\}$.
Let $F:=(F^{(1)},\ldots,F^{(m)})$ and let $N$ be a centered Gaussian random
vector with a given positive semidefinite covariance matrix
$(\sigma_{k,l})_{k,l=1,\ldots,m}$. Then
\begin{eqnarray*}
\label{eqn:CLTmulti} \mathbf{d}_3(F-\E F,N) & \le&\frac{m}{2} \sum
_{k,l=1}^m\bigl |\sigma _{k,l}-\C
\bigl(F^{(k)},F^{(l)}\bigr)\bigr|
\\
&&{} + \Biggl(\frac{m}{2}+\frac{m}{4}\sum
_{n=1}^m \sqrt{\V F^{(n)}}
\Biggr)2^{{13}/{2}} m^2 \sum_{i=1}^\infty
i^{17/2}\frac{b^i}{\lfloor i/14\rfloor!} \sqrt {a}.
\nonumber
\end{eqnarray*}
\end{theorem}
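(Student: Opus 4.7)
My approach is to invoke the general multivariate Malliavin--Stein bound for Poisson functionals due to Peccati and Zheng and then control the resulting Malliavin terms via the Wiener--It\^o chaos expansion, the product formula for multiple integrals, and hypercontractivity.

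First I would apply the multivariate Malliavin--Stein bound to $F - \E F$, which yields
\begin{align*}
{\bf d_3}(F-\E F,N)\le{}& \frac{m}{2}\sum_{k,l=1}^m |\sigma_{k,l}-\C(F^{(k)},F^{(l)})|\\
&+\frac{m}{2}\sum_{k,l=1}^m \sqrt{\V\langle DF^{(k)},-DL^{-1}F^{(l)}\rangle_{L^2(\lambda)}}\\
&+\frac{m}{4}\sum_{k=1}^m\int\E\bigl[(D_xF^{(k)})^2\,|D_xL^{-1}F^{(k)}|\bigr]\,\lambda(dx).
\end{align*}
The first summand already matches the target, so it remains to estimate the other two pieces uniformly in $(k,l)$ using only assumptions \eqref{eqn:assumptionintegrability} and \eqref{eqn:assumptionbounds}.

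Next I would substitute the chaos expansions \eqref{Dx} and \eqref{DxL} for $D_xF^{(k)}$ and $-D_xL^{-1}F^{(l)}$ and apply the product formula for multiple Wiener--It\^o integrals on the Poisson space. This rewrites $\langle DF^{(k)},-DL^{-1}F^{(l)}\rangle - \C(F^{(k)},F^{(l)})$ as an absolutely convergent (by \eqref{eqn:assumptionintegrability}) series of centred multiple integrals whose kernels are symmetrizations of contractions of $f_i^{(k)}$ and $f_j^{(l)}$. Computing its variance via the It\^o isometry reduces the bound to a double sum of integrals of the form $\int|(f_i^{(k)}\otimes f_i^{(k)}\otimes f_j^{(l)}\otimes f_j^{(l)})_\sigma|\,d\lambda^{|\sigma|}$, and precisely the partitions $\sigma\in\widetilde\Pi_{ij}$ survive after the covariance cancels the chaos-order-zero contribution. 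This is why hypothesis \eqref{eqn:assumptionbounds} is stated on that subfamily.

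The third-order term is treated analogously, except that it requires an $L^3(\P)$ rather than an $L^2(\P)$ estimate. Here I would invoke the hypercontractivity inequality for Poisson multiple integrals, $\|I_n(h)\|_{L^3(\P)}\le 2^{n/2}\|I_n(h)\|_{L^2(\P)}$, followed by Cauchy--Schwarz to split the resulting triple product of Malliavin derivatives into squared norms; these, via the isometry, again reduce to diagonal contractions indexed by $\widetilde\Pi$. Since the cardinality of $\widetilde\Pi_{ij}$ grows only polynomially in $i,j$, combining it with the factorial denominators supplied by \eqref{eqn:assumptionbounds} yields a convergent double series, which after collapsing the summation to a single index and collecting the powers takes the claimed form $\sum_{i\ge 1} i^{17/2}b^i/\lfloor i/14\rfloor!$ multiplied by $\sqrt a$.

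The main obstacle is the combinatorial bookkeeping: tracking the coefficients produced by the product formula, by the multiplicities in $\Pi_{ij}$ and $\widetilde\Pi_{ij}$, and by the iterated Cauchy--Schwarz and hypercontractivity steps, until every term fits under one master bound with the constants $2^{13/2}$, $i^{17/2}$ and $\lfloor i/14\rfloor!$ displayed in the statement. The exponent $17/2$ and the denominator $\lfloor i/14\rfloor!$ reflect the number of times these inequalities must be iterated to reduce every expectation to a quantity to which \eqref{eqn:assumptionbounds} directly applies. A secondary technical point is the interchange of infinite summation and integration throughout, which is justified precisely by the absolute integrability assumption \eqref{eqn:assumptionintegrability}.
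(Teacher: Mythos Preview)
Your overall architecture is right and matches the paper: start from the Peccati--Zheng multivariate bound, insert the chaos expansions of $D_xF^{(k)}$ and $-D_xL^{-1}F^{(l)}$, and reduce everything to integrals of the form $\int|(f_i^{(k)}\otimes f_i^{(k)}\otimes f_j^{(l)}\otimes f_j^{(l)})_\sigma|\,d\lambda^{|\sigma|}$ indexed by $\sigma\in\widetilde\Pi_{ij}$, then sum using a combinatorial bound on $|\widetilde\Pi_{ij}|$. Your treatment of the variance term $\V\langle DF^{(k)},-DL^{-1}F^{(l)}\rangle$ is essentially what the paper does (the paper packages the diagram computation as a separate lemma, identity \eqref{eqn:Rij1}).

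The genuine gap is in the third-order term. The Poisson hypercontractivity estimate you invoke, $\|I_n(h)\|_{L^3(\P)}\le 2^{n/2}\|I_n(h)\|_{L^2(\P)}$, is a Gaussian-chaos fact; it is \emph{not} available for Poisson multiple integrals (already for $n=1$ the third moment of a compensated Poisson integral is not controlled by its $L^2$ norm). The paper avoids any $L^p$--$L^2$ comparison entirely: it first applies Cauchy--Schwarz in $L^2(\P\otimes\lambda)$ to separate $\int\E(D_zF^{(k)})^2|D_zL^{-1}F^{(l)}|\,\lambda(dz)$ into
\[
\Big(\int\E\, I_{i-1}(f_i^{(k)}(z,\cdot))^2 I_{j-1}(f_j^{(k)}(z,\cdot))^2\,\lambda(dz)\Big)^{1/2}\Big(\int\E(D_zL^{-1}F^{(l)})^2\,\lambda(dz)\Big)^{1/2},
\]
bounds the second factor by $\sqrt{\V F^{(l)}}$ via the chaos isometry, and computes the first factor \emph{exactly} as a sum over $\sigma\in\widetilde\Pi^{(2)}_{ij}$ using the product/diagram formula for fourth moments of Poisson multiple integrals (this is precisely identity \eqref{eqn:Rij2}, and is why the partitions with the four-element block $\{1,i+1,2i+1,2i+j+1\}$ enter). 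So the route to \eqref{eqn:assumptionbounds} for the cubic term is ``Cauchy--Schwarz $+$ exact fourth-moment diagram formula'', not hypercontractivity.

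A smaller point: $|\widetilde\Pi_{ij}|$ does not grow polynomially. The paper's bound (Lemma \ref{lem:Rij22}) is
\[
|\widetilde\Pi_{ij}|\le \frac{(i!)^2(j!)^2\max\{i+1,j+1\}^{11}}{\lceil\max\{i,j\}/7\rceil!},
\]
whose $(i!)^2(j!)^2$ numerator is designed to cancel the denominator in \eqref{eqn:assumptionbounds}; summability of the resulting series comes from the residual $\lceil\max\{i,j\}/7\rceil!$, and the exponents $17/2$ and $\lfloor i/14\rfloor!$ in the statement arise from this cancellation together with $\sqrt{m!}\ge\lfloor m/2\rfloor!$, not from iterating Cauchy--Schwarz.
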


In the univariate case, we have the following result for the
Wasserstein distance.

\begin{corol}\label{corol:univariat}
Let $F\in L^2_\eta$ be such that $\V F>0$ and the assumptions~\eqref
{eqn:assumptionintegrability} and \eqref{eqn:assumptionbounds} are
satisfied and let
$N$ be a standard Gaussian random variable. Then
\[
\mathbf{d}_W \biggl(\frac{F-\E F}{\sqrt{\V F}},N \biggr)
\le2^{{15}/{2}}\sum_{i=1}^\infty
i^{17/2}\frac{b^i}{\lfloor
i/14\rfloor!} \frac{\sqrt{a}}{\V F}.
\]
\end{corol}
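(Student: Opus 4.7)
The plan is to apply the standard one-dimensional Malliavin-Stein bound for the Wasserstein distance to the centred and normalized functional $\tilde F := (F-\E F)/\sqrt{\V F}$, and then to estimate the resulting Malliavin expressions using the same chaos-contraction machinery that drives the proof of Theorem \ref{thm:abstract}. The classical inequality of Peccati et al.\ \cite{Peccatietal2010} states that for any Poisson functional $G\in \dom D$ with $\E G=0$ and $\V G=1$,
$$
{\bf d_W}(G,N) \le \E\bigl|1 - \langle DG, -DL^{-1}G\rangle_{L^2(\lambda)}\bigr| + \int \E\bigl[(D_x G)^2 \, |D_xL^{-1}G|\bigr] \, \lambda(dx).
$$
Applying this to $\tilde F$ and pulling the factor $\sqrt{\V F}$ out of the difference operators gives
$$
{\bf d_W}(\tilde F,N) \le \frac{1}{\V F} \, \E\bigl|\V F - \langle DF, -DL^{-1}F\rangle\bigr| + \frac{1}{(\V F)^{3/2}}\int \E\bigl[(D_x F)^2 \, |D_xL^{-1}F|\bigr] \, \lambda(dx).
$$

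Next, I would expand both terms via the chaos representations \eqref{Dx} and \eqref{DxL}. After Cauchy-Schwarz, the first term is dominated by $(\V F)^{-1}\sqrt{\E (\V F - \langle DF, -DL^{-1}F\rangle)^2}$; expanding the product $\langle DF, -DL^{-1}F\rangle$ via the product formula for multiple Wiener-It\^o integrals and using their isometry properties, the inner variance becomes a double sum of contraction integrals
$$
\int (f_i \otimes f_i \otimes f_j \otimes f_j)_\sigma \, d\lambda^{|\sigma|}, \qquad \sigma \in \widetilde\Pi^{(1)}_{ij} \cup \widetilde\Pi^{(2)}_{ij}.
$$
For the cubic term I would first apply Cauchy-Schwarz in $\lambda\otimes \P$, yielding
$$
\int \E\bigl[(D_xF)^2|D_xL^{-1}F|\bigr]\, \lambda(dx) \le \sqrt{\int \E (D_xF)^4\, \lambda(dx)}\cdot \sqrt{\int \E (D_xL^{-1}F)^2\, \lambda(dx)},
$$
using the identity $\int \E (D_xL^{-1}F)^2\,\lambda(dx) = \V F$, which follows directly from \eqref{Fock} and \eqref{DxL}. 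Expanding $\E(D_xF)^4$ via the product formula again produces contraction integrals, now indexed by partitions $\sigma\in \widetilde\Pi_{ij}$.

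Finally, the integrability assumption \eqref{eqn:assumptionintegrability} justifies the product-formula manipulations, while \eqref{eqn:assumptionbounds} bounds each contraction uniformly by $a\, b^{i+j}/((i!)^2(j!)^2)$. Combining this with a combinatorial count of the partitions in $\widetilde\Pi_{ij}$ (which gives a polynomial factor of combined order $17/2$ in $i,j$) reorganizes the double sum into a single series of the form $\sum_{i\ge 1} i^{17/2}\, b^i/\lfloor i/14\rfloor!$; its convergence is immediate from the factorial in the denominator. The square roots from the two Cauchy-Schwarz steps supply the factor $\sqrt{a}$, and the common prefactor $1/\V F$ produces the final bound.

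The main obstacle will be the cubic term, because the absolute value in $|D_xL^{-1}F|$ precludes a direct product-formula expansion. The Cauchy-Schwarz reduction to the $L^4$-norm of $DF$ is routine, but must be complemented by a careful combinatorial analysis of the partitions in $\widetilde\Pi_{ij}$ appearing in the contraction expansion of $\E(D_xF)^4$. The growth in the number of relevant partitions, together with the assumed bound \eqref{eqn:assumptionbounds}, is what produces the constant $2^{13/2}$ and the summability factor $i^{17/2}/\lfloor i/14\rfloor!$ in the final estimate.
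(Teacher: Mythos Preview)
Your overall strategy is the paper's: start from the Peccati--Sol\'e--Taqqu--Utzet Wasserstein bound, treat the variance term and the cubic term separately, and feed both into the chaos/partition machinery of Lemma~\ref{lem:Rij}, Lemma~\ref{lem:Rij22} and assumption~\eqref{eqn:assumptionbounds}. The first term is handled exactly as in the paper.

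There are two points where your treatment of the \emph{cubic} term diverges from the paper and, as written, leaves a gap. First, the relation $\int \E(D_xL^{-1}F)^2\,\lambda(dx)=\V F$ is not an identity: from \eqref{DxL} one gets $\sum_{n\ge 1}(n-1)!\|f_n\|_n^2\le \V F$, which is the inequality the paper actually uses. This is harmless for the argument, but worth correcting. Second, and more substantially, applying Cauchy--Schwarz globally to $\int\E[(D_xF)^2|D_xL^{-1}F|]$ leaves you with $\int\E(D_xF)^4\,\lambda(dx)$. A direct chaos expansion of this fourth moment produces a \emph{four}-index sum $\sum_{i_1,\dots,i_4}$ and partitions of $[i_1]\cup\cdots\cup[i_4]$, not the two-index family $\widetilde\Pi_{ij}$ you invoke; so the sentence ``produces contraction integrals, now indexed by partitions $\sigma\in\widetilde\Pi_{ij}$'' does not follow. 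One can recover a two-index structure by first applying Minkowski in $L^4(\lambda\otimes\P)$ to $D_xF=\sum_i i\,I_{i-1}(f_i(x,\cdot))$, but this extra step is missing and the resulting bound is not obviously the exact constant $2^{13/2}\sum_i i^{17/2}b^i/\lfloor i/14\rfloor!$ of the corollary.

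The paper avoids this difficulty by reversing the order of operations: it first expands $(D_zF)^2=\sum_{i,j} ij\, g_i(z)g_j(z)$ with $g_i(z)=I_{i-1}(f_i(z,\cdot))$, and only then applies Cauchy--Schwarz \emph{termwise} against $|D_zL^{-1}F|$, obtaining $\sum_{i,j} ij\,(\int\E g_i^2g_j^2)^{1/2}(\int\E(D_zL^{-1}F)^2)^{1/2}$. By \eqref{eqn:Rij2} each $\int\E g_i^2g_j^2$ is exactly a sum over $\widetilde\Pi^{(2)}_{ij}$, so the combinatorial Lemma~\ref{lem:Rij22} and \eqref{eqn:assumptionbounds} apply directly and yield precisely the stated constant. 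If you reorganise your cubic-term estimate in this way, the proof goes through verbatim as in Proposition~\ref{prop:abstractbound} and Theorem~\ref{thm:abstract}.
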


We prepare the proof of Theorem~\ref{thm:abstract} by two lemmas and a
proposition.

\begin{lemma}\label{lem:Rij} Let $i,j\in\N$, $f\in L_s^2(\lambda^i)$,
$g\in L^2_s(\lambda^j)$,
and assume that
\[
\int\bigl|(f\otimes f\otimes g\otimes g)_\sigma\bigr| \,d\lambda^{|\sigma|} <
\infty, \qquad\sigma\in\Pi_{ij}.
\]
Then
%
\begin{eqnarray}
\label{eqn:Rij1}&& \V \biggl(\int I_{i-1}\bigl(f(z,\cdot)\bigr)
I_{j-1}\bigl(g(z,\cdot)\bigr) \lambda (dz) \biggr)
\nonumber
\\[-8pt]
\\[-8pt]
\nonumber
&&\qquad = \sum
_{\sigma\in\widetilde{\Pi}^{(1)}_{ij}}\int(f \otimes f \otimes g \otimes g)_{\sigma} \,d
\lambda^{|\sigma|},
\\
\label{eqn:Rij2}
&&\E\int I_{i-1}\bigl(f(z,\cdot)\bigr)^2
I_{j-1}\bigl(g(z,\cdot)\bigr)^2 \lambda(dz)
\nonumber
\\[-8pt]
\\[-8pt]
\nonumber
&&\qquad = \sum
_{\sigma\in\widetilde{\Pi}^{(2)}_{ij}}\int(f \otimes f \otimes g \otimes
g)_{\sigma} \,d\lambda^{|\sigma|}.
\end{eqnarray}
\end{lemma}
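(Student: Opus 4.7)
My plan is to use the product formula for multiple Wiener-It\^o integrals on the Poisson space (as in \cite{LaPe11,Peccatietal2010}) together with the isometry $\E I_n(h)^2 = n!\|h\|_n^2$ and the orthogonality of chaoses of different orders. The integrability hypothesis on $(f\otimes f\otimes g\otimes g)_\sigma$ for every $\sigma\in\Pi_{ij}$ is exactly what is needed to justify the Fubini interchanges and the absolute convergence of the partition sums that appear below.

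For \eqref{eqn:Rij1}, I would first apply the product formula to the integrand, obtaining a finite expansion of $I_{i-1}(f(z,\cdot))\,I_{j-1}(g(z,\cdot))$ into a sum of multiple Wiener-It\^o integrals whose kernels are contractions $f(z,\cdot)\star_r^l g(z,\cdot)$ for $0\le l\le r\le (i-1)\wedge(j-1)$. Integrating in $z$ against $\lambda$ and using stochastic Fubini yields a representation of $\int I_{i-1}(f(z,\cdot))I_{j-1}(g(z,\cdot))\,\lambda(dz)$ as a finite sum of multiple Wiener-It\^o integrals of orders between $0$ and $i+j-2$. Orthogonality of chaoses and the isometry reduce its variance to a sum of squared $L^2$-norms of the positive-order kernels, and after unfolding the definition of $\star_r^l$ and performing the second $z'$-integration (from squaring), each such squared norm becomes an integral of $(f\otimes f\otimes g\otimes g)_\sigma$ for some partition $\sigma$ of $I_{ij}=\{1,\ldots,2i+2j\}$.

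The key combinatorial step is to identify the surviving partitions as exactly $\widetilde{\Pi}^{(1)}_{ij}$. The two outer integration variables $z,z'$ contribute the two blocks $\{1,2i+1\}$ and $\{i+1,2i+j+1\}$ (pairing the first argument of each $f$-copy with the first argument of the $g$-copy attached to the same outer integral), which is condition (i) in case (1); the remaining contractions from the product formula and pairings from the isometry identify further variables across the blocks $J_1,\ldots,J_4$, producing blocks of size at least two, which is condition (ii); and the ``connectedness'' condition (iii) is precisely the obstruction to a term factoring as a product of expectations, since disconnected partitions reassemble into $\bigl(\E\int I_{i-1}(f(z,\cdot))I_{j-1}(g(z,\cdot))\,\lambda(dz)\bigr)^2$ and are therefore removed by the variance. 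For \eqref{eqn:Rij2}, the single outer integration in $z$ forces the four copies of $z$ (one in each factor) into the single block $\{1,i+1,2i+1,2i+j+1\}$, which is condition (i) in case (2); since one takes expectation rather than variance, only the $0$-th chaos of the product-formula expansion survives, and conditions (ii)--(iii) arise from the same contraction-and-connectedness mechanism as before.

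The main obstacle is the combinatorial bookkeeping: one must check that after applying the product formula, taking isometries, and carrying out the $z$- (and in case (1), $z'$-)integrations, the resulting sum reproduces $\widetilde{\Pi}^{(1)}_{ij}$ (resp.\ $\widetilde{\Pi}^{(2)}_{ij}$) with multiplicity exactly one, and that the three conditions defining these classes correctly encode the role of the outer integration(s), the Gaussian-style pairings produced by the isometry, and the centring that separates genuine variance contributions from mean-squared contributions. A diagram-formula viewpoint, as used in \cite{Peccatietal2010,PeccatiZheng2010}, makes this bijection transparent once one identifies ``connected non-flat diagrams with no singleton blocks'' with the partitions in $\widetilde{\Pi}^{(1)}_{ij}\cup\widetilde{\Pi}^{(2)}_{ij}$.
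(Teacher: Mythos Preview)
Your approach is correct in principle but more circuitous than the paper's.  The paper does not pass through the two-factor product formula at all.  Instead, for \eqref{eqn:Rij1} it writes
\[
\E\Big(\int I_{i-1}(f(z,\cdot)) I_{j-1}(g(z,\cdot))\,\lambda(dz)\Big)^2
=\iint \E\big[I_{i-1}(f(y,\cdot))I_{i-1}(f(z,\cdot))I_{j-1}(g(y,\cdot))I_{j-1}(g(z,\cdot))\big]\,\lambda(dy)\,\lambda(dz)
\]
and applies the four-factor diagram formula of \cite[Theorem 3.1]{LPST} (cf.\ \cite{Surgailis1984,PeccatiTaqqu2010}) directly to the inner expectation; the $y$- and $z$-integrations then supply the two blocks $\{1,2i+1\}$ and $\{i+1,2i+j+1\}$, and subtracting the square of \eqref{eqn:orthogonal} removes exactly the partitions violating (iii).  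For \eqref{eqn:Rij2} one only needs Fubini and the same diagram formula applied pointwise in $z$.  This is precisely the ``diagram-formula viewpoint'' you mention at the end---the paper adopts it from the outset rather than as a heuristic check.

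Two small remarks on your argument.  First, in your product-formula route the variance is a sum of $k!\|h_k\|_k^2$ where each $h_k$ is itself a \emph{sum} of symmetrised contractions of the same order, so cross terms between different $(r,l)$ with $i+j-2-r-l=k$ must be tracked; you implicitly acknowledge this but do not spell it out.  Second, your explanation of condition (iii) for \eqref{eqn:Rij2} is slightly misleading: once the block $\{1,i+1,2i+1,2i+j+1\}$ is present it already meets every $J_u$, so (iii) is automatic and no ``connectedness mechanism'' is needed there.  For \eqref{eqn:Rij1}, by contrast, the only bipartition of $\{1,2,3,4\}$ not already bridged by $\{1,2i+1\}$ or $\{i+1,2i+j+1\}$ is $\{1,3\}$ versus $\{2,4\}$, and the partitions violating (iii) for this split factor exactly into the square of the mean---so your reading of (iii) is correct in that case.
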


\begin{pf}
Combining the formulas
\begin{eqnarray*}
&& \E \biggl(\int I_{i-1}\bigl(f(z,\cdot)\bigr) I_{j-1}
\bigl(g(z,\cdot)\bigr) \lambda (dz) \biggr)^2
\\
&&\qquad =\iint\E I_{i-1}\bigl(f(y,\cdot)\bigr) I_{i-1}\bigl(f(z,
\cdot)\bigr) I_{j-1}\bigl(g(y,\cdot)\bigr) I_{j-1}\bigl(g(z,
\cdot)\bigr) \lambda(dy) \lambda(dz)
\end{eqnarray*}
and
%
\begin{equation}\quad
\label{eqn:orthogonal} \E\int I_{i-1}\bigl(f(z,\cdot)\bigr) I_{j-1}
\bigl(g(z,\cdot)\bigr) \lambda(dz) = \cases{ (i-1)! \langle f,g
\rangle_i, &\quad $i=j$,\vspace*{2pt}
\cr
0, & \quad $i\neq j$,}
\end{equation}
with Theorem~3.1 in \cite{LPST}
(see also \cite{PeccatiTaqqu2010}, Corollary~7.2 and \cite{Surgailis1984},
Proposition~3.1) proves
the first equation. The second identity is a consequence of
\[
\E\int I_{i-1}\bigl(f(z,\cdot)\bigr)^2 I_{j-1}
\bigl(g(z,\cdot)\bigr)^2 \lambda(dz) =\int\E I_{i-1}
\bigl(f(z,\cdot)\bigr)^2 I_{j-1}\bigl(g(z,\cdot)
\bigr)^2 \lambda(dz)
\]
and, again, Theorem~3.1 in \cite{LPST}.
\end{pf}

\begin{proposition}\label{prop:abstractbound}
Let $F^{(1)},\ldots,F^{(m)} \in L^2_\eta$ be such that \eqref
{eqn:assumptionintegrability}
holds. Let $F:=(F^{(1)},\ldots,F^{(m)})$ and let $N$ be a centered
Gaussian random
vector with a given positive semidefinite covariance
matrix $(\sigma_{k,l})_{k,l=1,\ldots,m}$. Then
%
\begin{eqnarray}\label{eqn:boundd3}
&&\mathbf{d}_3(F-\E F, N)\nonumber\\
&&\qquad\leq\frac{m}{2} \sum
_{k,l=1}^m \bigl|\sigma _{k,l}-\C
\bigl(F^{(k)},F^{(l)}\bigr)\bigr|
\nonumber
\\[-8pt]
\\[-8pt]
\nonumber
&&\qquad\quad{} + \Biggl(\frac{m}{2}+\frac{m}{4}\sum
_{n=1}^m\sqrt{\V F^{(n)}} \Biggr)\\
&&\qquad\qquad{}\times\sum
_{k,l=1}^m \sum
_{i,j=1}^\infty ij \sqrt{\sum
_{\sigma\in\widetilde{\Pi}_{i,j}} \int\bigl|\bigl(f_i^{(k)}\otimes
f_i^{(k)} \otimes f_j^{(l)}\otimes
f_j^{(l)}\bigr)_\sigma\bigr| \,d\lambda^{|\sigma|}}
\nonumber
.
\end{eqnarray}
\end{proposition}

\begin{pf}
To avoid convergence issues, we start by proving \eqref{eqn:boundd3}
for $F_s:=(F_s^{(1)},\ldots,F_s^{(m)})$ with the truncated Poisson
functionals $F_s^{(l)}:=\BE F^{(l)}+ \sum_{n=1}^s I_n(f_n^{(l)})$,
$l\in
\{1,\ldots,m\}$, for a fixed $s\in\N$. By construction, we have
$F_s^{(1)},\ldots,F_s^{(m)}\in\operatorname{dom} D$. From \cite{PeccatiZheng2010},
Theorem~4.2, it is known that
%
\begin{eqnarray}
\label{eqn:boundPeccatiZheng} \mathbf{d}_3(F_s-\E F_s,N)
&\le& \frac{m}{2} \sqrt{\sum_{k,l=1}^m
\E \biggl(\sigma_{k,l}- \int D_zF_s^{(k)}
\bigl(-D_zL^{-1}F_s^{(l)}\bigr)
\lambda(dz) \biggr)^2}
\nonumber
\\[-8pt]
\\[-8pt]
\nonumber
& &{}+\frac{1}{4}\int \E \Biggl(\sum_{k=1}^m
\bigl|D_zF_s^{(k)}\bigr| \Biggr)^2 \sum
_{l=1}^m\bigl|D_zL^{-1}F_s^{(l)}\bigr|
\lambda(dz).
\end{eqnarray}
We bound the two summands on the above right-hand side separately.
For the first one, we have
\begin{eqnarray*}
&& \sqrt{\sum_{k,l=1}^m \E
\biggl(\sigma_{k,l}- \int D_zF_s^{(k)}
\bigl(-D_zL^{-1}F_s^{(l)}\bigr)
\lambda(dz) \biggr)^2}
\\
& &\qquad\leq\sum_{k,l=1}^m \biggl(\E
\biggl(\sigma_{k,l}-\C \bigl(F_s^{(k)},F_s^{(l)}
\bigr)+\C\bigl(F_s^{(k)},F_s^{(l)}
\bigr)\\
&&\hspace*{106pt}{}- \int D_zF_s^{(k)}\bigl(-D_zL^{-1}F_s^{(l)}
\bigr) \lambda(dz) \biggr)^2\biggr)^{1/2}
\\
&&\qquad \leq\sum_{k,l=1}^m \biggl(\bigl|
\sigma_{k,l}-\C \bigl(F_s^{(k)},F_s^{(l)}
\bigr)\bigr|\\
&&\hspace*{60pt}{}+\sqrt {\E \biggl(\C\bigl(F_s^{(k)},F_s^{(l)}
\bigr)- \int D_zF_s^{(k)}\bigl(-D_zL^{-1}F_s^{(l)}
\bigr) \lambda(dz) \biggr)^2} \biggr).
\end{eqnarray*}
Put $g_n^{(l)}(z):=I_{n-1}(f_n^{(l)}(z,\cdot))$. From \eqref{Dx},
\eqref
{DxL}, the covariance version of \eqref{Fock} [see \eqref{2.8}] and
\eqref{eqn:orthogonal},
we obtain that
\begin{eqnarray*}
a_s^{k,l} &:=&\E \biggl(\int D_zF_s^{(k)}
\bigl(-D_zL^{-1}F_s^{(l)}\bigr)
\lambda (dz)-\C\bigl(F_s^{(k)},F_s^{(l)}
\bigr) \biggr)^2
\\
& =& \E \Biggl(\int\sum_{i=1}^s i
g_i^{(k)}(z) \sum_{j=1}^s
g_j^{(l)}(z) \lambda(dz)-\sum_{n=1}^s
n! \bigl\langle f_n^{(k)},f_n^{(l)}
\bigr\rangle_n \Biggr)^2
\\
& =& \V \Biggl(\sum_{i,j=1}^s i\int
g_i^{(k)}(z) g_j^{(l)}(z) \lambda(dz)
\Biggr).
\end{eqnarray*}
Note that the right-hand side is well defined since \eqref
{eqn:assumptionintegrability} and Lemma~\ref{lem:Rij} ensure that each
of the summands is square integrable.

Since $\sqrt{\V(Y_1+Y_2)} \leq\sqrt{\V Y_1}+\sqrt{\V Y_2}$ for random
variables $Y_1,Y_2$, we obtain
\begin{eqnarray*}
\sqrt{a_s^{k,l}} & \le&\sum
_{i,j=1}^s i \sqrt{\V \biggl(\int
g_i^{(k)}(z) g_j^{(l)}(z) \lambda(dz)
\biggr)}
\\
& \le&\sum_{i,j=1}^s i \sqrt{\sum
_{\sigma\in\widetilde{\Pi}_{ij}} \int \bigl| \bigl(f_i^{(k)}
\otimes f_i^{(k)}\otimes f_j^{(l)}
\otimes f_j^{(l)}\bigr)_\sigma \bigr| \,d
\lambda^{|\sigma|}},
\end{eqnarray*}
where we have applied \eqref{eqn:Rij1} in Lemma~\ref{lem:Rij} to get
the final inequality.

By Jensen's inequality and the definitions of the Malliavin operators,
we obtain
for the second summand in \eqref{eqn:boundPeccatiZheng} that
\begin{eqnarray*}
&& \int\E \Biggl(\sum_{k=1}^m
\bigl|D_zF_s^{(k)}\bigr| \Biggr)^2 \sum
_{l=1}^m\bigl|D_zL^{-1}F_s^{(l)}\bigr|
\lambda(dz)
\\
&&\qquad \le m \sum_{k,l=1}^m \int\E
\bigl(D_zF_s^{(k)}\bigr)^2\bigl|D_zL^{-1}F_s^{(l)}\bigr|
\lambda(dz)
\\
&&\qquad \le m \sum_{k,l=1}^m \sum
_{i,j=1}^s i j \int\E\bigl|g^{(k)}_i(z)\bigr|
\bigl|g^{(k)}_j(z)\bigr| \bigl|D_zL^{-1}F_s^{(l)}\bigr|
\lambda (dz)
\\
&&\qquad \leq m \sum_{k,l=1}^m \sum
_{i,j=1}^s i j \sqrt{\int\E
g_i^{(k)}(z)^2 g_j^{(k)}(z)^2
\lambda(dz)} \sqrt{\int\E\bigl(D_zL^{-1}F_s^{(l)}
\bigr)^2 \lambda(dz)}.
\end{eqnarray*}
Combining \eqref{DxL} and \eqref{eqn:orthogonal} with \eqref{Fock},
we get
\[
\int\E\bigl(D_zL^{-1}F_s^{(l)}
\bigr)^2 \lambda(dz) = \sum_{n=1}^s
(n-1)! \bigl\| f_n^{(l)}\bigr\|_n^2 \leq\V
F_s^{(l)}.
\]
Now \eqref{eqn:Rij2} in Lemma~\ref{lem:Rij} completes the proof of
\eqref{eqn:boundd3} for $F_s$. By the triangle inequality for the
$\mathbf{d}_3$-distance and \cite{LPST}, Lemma~5.5, we have that
\begin{eqnarray*}
\mathbf{d}_3(F-\BE F, N) & \leq&\mathbf{d}_3(F-\BE F,
F_s-\BE F_s) +\mathbf{d}_3(F_s-
\BE F_s, N)
\\
& \leq& m\sqrt{\BE\|F-\BE F\|^2+\BE\|F_s-\BE
F_s\|^2} \sqrt{\BE\| F-F_s\|^2}\\
&&{}+\mathbf{d}_3(F_s-\BE F_s, N),
\end{eqnarray*}
where $\|\cdot\|$ stands for the Euclidean norm in $\R^m$. Since
$F^{(l)}_s\to F^{(l)}$ in $L^2_\eta$ as $s\to\infty$, the first summand
vanishes as $s\to\infty$. Applying \eqref{eqn:boundd3} to the second
summand and letting $s\to\infty$ completes the proof.
\end{pf}

\begin{lemma}\label{lem:Rij22} For any integers $i,j\ge1$,
\[
|\widetilde{\Pi}_{i,j}| \le \frac{(i!)^2(j!)^2\max\{i+1,j+1\}^{11}}{\lceil{\max\{i,j\}/
7}\rceil!}.
\]
\end{lemma}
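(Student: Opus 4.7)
The plan is to bound $|\widetilde{\Pi}_{ij}|$ by classifying partitions according to block-type counts and then invoking a pigeonhole argument on one of these counts to extract the factorial denominator. Without loss of generality we may assume $i\ge j$, so that $\max\{i,j\}=i$.

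First I will observe that, by the defining condition of $\Pi_{ij}$, any block $B\in\sigma$ satisfies $|B\cap J_k|\le 1$ for $k\in\{1,2,3,4\}$; combined with condition (ii) in the definition of $\widetilde{\Pi}_{ij}$, this forces $|B|\in\{2,3,4\}$. Hence each block has a \emph{type} $T\subset\{1,2,3,4\}$ with $|T|\in\{2,3,4\}$, where $T$ records which $J_k$'s the block meets. The number of possible types is $\binom{4}{2}+\binom{4}{3}+\binom{4}{4}=11$.

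Next, for a fixed tuple $(n_T)_T$ of nonnegative integers indexed by the 11 admissible types and satisfying $\sum_{T\ni k} n_T=|J_k|$ (which equals $i$ for $k\in\{1,2\}$ and $j$ for $k\in\{3,4\}$), I will count the number of partitions $\sigma\in\Pi_{ij}$ having exactly $n_T$ blocks of each type $T$. This count factorises in two steps: (a) partition each $J_k$ into labelled pieces of sizes $(n_T)_{T\ni k}$, contributing $|J_k|!/\prod_{T\ni k} n_T!$; (b) for each type $T$, combine the selected pieces into $n_T$ unordered blocks by choosing $|T|-1$ bijections among sets of size $n_T$, contributing $(n_T!)^{|T|-1}$. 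Multiplying all factors and cancelling telescopically yields the clean formula
\begin{equation*}
\#\{\sigma\in\Pi_{ij}:\text{type profile }(n_T)\}=\frac{(i!)^2(j!)^2}{\prod_T n_T!}.
\end{equation*}
Summing over all admissible profiles and dropping conditions (i) and (iii) (which only shrink $\widetilde{\Pi}_{ij}$) gives
\begin{equation*}
|\widetilde{\Pi}_{ij}|\le (i!)^2(j!)^2 \sum_{(n_T)} \frac{1}{\prod_T n_T!}.
\end{equation*}

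Finally, the pigeonhole step: since each coordinate $n_T$ lies in $\{0,1,\dots,\max\{i,j\}\}$, the number of tuples is at most $(\max\{i,j\}+1)^{11}=\max\{i+1,j+1\}^{11}$. Moreover, index $1$ belongs to exactly $7$ of the $11$ admissible types (namely all $T\ni 1$ of size $2$, $3$, or $4$), so the constraint $\sum_{T\ni 1} n_T=i$ forces at least one $n_T$ with $1\in T$ to satisfy $n_T\ge \lceil i/7\rceil=\lceil \max\{i,j\}/7\rceil$. Consequently $\prod_T n_T!\ge \lceil \max\{i,j\}/7\rceil!$, and combining the two bounds yields the claim.

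The main obstacle will be the bookkeeping in step (b) above: verifying that $(n_T!)^{|T|-1}$ is the correct factor for matching the selected pieces into unordered blocks, and that the product over $k$ of $\prod_{T\ni k} n_T!$ in the denominator from step (a) collapses to $\prod_T (n_T!)^{|T|}$, so that after multiplication with step (b) only $\prod_T n_T!$ survives. Once this identity is in hand, the pigeonhole estimate delivers the stated bound without further effort.
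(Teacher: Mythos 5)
Your proof is correct and takes essentially the same route as the paper's: classify partitions by block-type counts (the paper's $k_{12},\dots,k_{1234}$ are your $n_T$), bound the number of partitions per profile by $(i!)^2(j!)^2/\prod_T n_T!$, and apply a pigeonhole argument on the seven types containing index $1$ together with a count of at most $\max\{i+1,j+1\}^{11}$ profiles. The only difference is that you supply an explicit bijective justification (your steps (a) and (b)) for the per-profile count, which the paper merely asserts as an inequality.
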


\begin{pf}
For a fixed partition $\sigma\in\widetilde{\Pi}_{ij}$, let $k_{uv}$
with $u,v\in\{1,2,3,4\}$ and $u<v$ be the number of blocks $A\in
\sigma$
such that $|A\cap J_u|=|A\cap J_v|=1$ and $A\cap(J_u \cup J_v)=A$. We
define $k_{uvw}$ for $u,v,w\in\{1,2,3,4\}$ with $u<v<w$ and $k_{1234}$
in the same way. For a possible combination of fixed numbers
$k_{12},\ldots,k_{1234}$ the number of partitions $\sigma\in
\widetilde
{\Pi}_{ij}$
having this form is less than
\[
\frac
{(i!)^2(j!)^2}{k_{12}!k_{13}!k_{14}!k_{23}!k_{24}!k_{34}!k_{123}!k_{124}!k_{134}!k_{234}!k_{1234}!} \le\frac{(i!)^2(j!)^2}{\lceil{\max\{i,j\}/ 7}\rceil!}.
\]
To get this inequality, we have used the fact that
\[
k_{12}+k_{13}+k_{14}+k_{123}+k_{124}+k_{134}+k_{1234}=i,
\]
whence one of the factors in the denominator is at least $\lceil
i/7\rceil$. For a similar reason, there must be a factor in the
denominator that is at least $\lceil j/7\rceil$.

Moreover, there are less than $\max\{i+1,j+1\}^{11}$ possible choices for
$k_{12},\ldots, k_{1234}$, which completes the proof.
\end{pf}

Note that we have not used the first and the third condition of the
definition of $\widetilde{\Pi}_{ij}$ in the proof of Lemma~\ref
{lem:Rij22}, whence the inequality even holds for a larger class of
partitions. Now we are prepared for the proofs of Theorem~\ref
{thm:abstract} and Corollary~\ref{corol:univariat}.

\begin{pf*}{Proof of Theorem~\ref{thm:abstract}}
We aim at applying Proposition~\ref{prop:abstractbound}. Combining
Lemma~\ref{lem:Rij22}
and assumption \eqref{eqn:assumptionbounds}, we get
\begin{eqnarray*}
&&\sum_{i,j=1}^\infty ij \sqrt{\sum
_{\sigma\in\widetilde{\Pi}_{ij}} \int \bigl| \bigl(f_i^{(k)}
\otimes f_i^{(k)}\otimes f_j^{(l)}
\otimes f_j^{(l)}\bigr)_\sigma\bigr | \,d
\lambda^{|\sigma|}} \\
&&\qquad\le\sum_{i,j=1}^\infty
i j \sqrt{\frac{\max
\{
i+1,j+1\}^{11}
b^{i+j}a}{\lceil{\max\{i,j\}/ 7}\rceil!}}.
\end{eqnarray*}
A straightforward computation and the inequality $\sqrt{m!}\geq
\lfloor
m/2\rfloor!$
for $m\in\N$ show that
\begin{eqnarray*}
\sum_{i,j=1}^\infty i j \sqrt{
\frac{\max\{i+1,j+1\}^{11}
b^{i+j}}{\lceil{\max\{i,j\}/7}\rceil!}} &\le&2^{{13}/{2}} \sum_{1\leq j \leq i}
i^2 \sqrt{\frac{\max\{i,j\}^{11} b^{i+j}}{\lceil{\max\{i,j\}/7}\rceil!}} \\
&\le&2^{{13}/{2}} \sum
_{i=1}^\infty i^{17/2} \frac{b^i}{\lfloor{i/14}\rfloor!},
\end{eqnarray*}
where the right-hand side converges. Thus, Theorem~\ref{thm:abstract}
is a consequence of Proposition~\ref{prop:abstractbound}.
\end{pf*}

\begin{pf*}{Proof of Corollary~\ref{corol:univariat}}
We define the truncated Poisson functional $F_s:=\BE F+\sum_{n=1}^s
I_n(f_n)$ for $s\in\N$. By the triangle inequality for the Wasserstein
distance and combining the definition of the Wasserstein distance with
the Cauchy--Schwarz inequality, we obtain that
\begin{eqnarray*}
\mathbf{d}_W \biggl(\frac{F-\BE F}{\sqrt{\V F}},N \biggr) & \leq&{\mathbf
d_W} \biggl(\frac{F-\BE F}{\sqrt{\V F}}, \frac{F_s-\BE F_s}{\sqrt{\V F}} \biggr) +
\mathbf{d}_W \biggl(\frac{F_s-\BE F_s}{\sqrt{\V F}},N \biggr)
\\
& \leq&\frac{\sqrt{\BE(F-F_s)^2}}{\sqrt{\V F}} + \mathbf{d}_W \biggl(
\frac
{F_s-\BE F_s}{\sqrt{\V F}},N \biggr).
\end{eqnarray*}
Here, the first summand vanishes as $s\to\infty$ since $F_s\to F$ in
$L^2_\eta$ as $s\to\infty$. For the second term, we know from \cite{Peccatietal2010},
Theorem~3.1, that
\begin{eqnarray*}
&&\mathbf{d}_W \biggl(\frac{F_s-\E F_s}{\sqrt{\V F}},N \biggr) \\
&&\qquad\le
\frac{\V F -\V F_s}{\V F}+\frac{1}{\V F} \sqrt{\E \biggl(\V F_s-\int
D_zF_s\bigl(-D_zL^{-1}F_s
\bigr) \lambda(dz) \biggr)^2}
\\
&&\qquad\quad{} +\frac{1}{(\V F)^{3/2}}\int\E(D_z F_s)^2
\bigl|D_zL^{-1}F_s\bigr| \lambda(dz).
\end{eqnarray*}
Now we can use the same arguments as in
the proofs of Proposition~\ref{prop:abstractbound} and Theorem~\ref
{thm:abstract}.
\end{pf*}

\section{Central limit theorems for geometric functionals}\label{secmclt}

In the following, we use the general normal approximation results of
the previous section to
derive central limit theorems for geometric functionals of the Boolean
model \eqref{Bool}.
We establish central limit theorems under the minimal moment assumption
\eqref{vi2}, but we need a stronger moment assumption in order to
derive rates of convergence. For the Berry--Esseen bounds, we assume
that the typical grain $Z_0$ of the Boolean model satisfies the moment
assumption
%
\begin{equation}
\label{eqn:moment3plus} \BE V_i(Z_0)^{3+\varepsilon}<\infty,\qquad  i
\in\{0,\ldots,d\},
\end{equation}
for a fixed $\varepsilon\in(0,1]$. This allows us to state central
limit theorems with rates of convergence depending on $\varepsilon$.

\begin{theorem}\label{thm:multiGeneral}
Let $\psi_1,\ldots,\psi_m$ be geometric functionals on $\cR^d$ and let
$\Psi:=(\psi_1, \ldots, \psi_m)$.
Assume that \eqref{vi2} is satisfied and let $N$ be an $m$-dimensional centered
Gaussian random vector with covariance matrix $(\sigma_{\psi_k,\psi
_l})_{k,l=1,\ldots,m}$
given by \eqref{eqn:limitCovariance}. Then
\[
\frac{1}{\sqrt{V_d(W)}}\bigl(\Psi(Z\cap W)-\E\Psi(Z\cap W)\bigr)
 \stackrel {d} {
\longrightarrow} N \qquad\mbox{as } r(W)\to\infty.
\]
If \eqref{eqn:moment3plus} is satisfied, there is a constant $c_{\psi
_1,\ldots,\psi_m}$
depending on $\psi_1,\ldots,\psi_m,\Lambda$, and $\varepsilon$
such that
%
\begin{equation}
\label{No1} \mathbf{d}_3 \biggl(\frac{1}{\sqrt{V_d(W)}}\bigl(\Psi(Z\cap
W)-\E\Psi (Z\cap W)\bigr),N \biggr) \le\frac{c_{\psi_1,\ldots,\psi_m}}{r(W)^{\min\{\varepsilon d/2,1\}}}
\end{equation}
for $W\in\cK^d$ with $r(W)\geq1$.
\end{theorem}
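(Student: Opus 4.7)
The plan is to apply Theorem \ref{thm:abstract} to the vector of normalized Poisson functionals $F^{(k)}_W := \psi_k(Z\cap W)/\sqrt{V_d(W)}$, $k=1,\dots,m$. By Lemma \ref{lem:kernels}, formula \eqref{Tn}, and the symmetry of $D^n$, the chaos expansion kernels of $F^{(k)}_W$ are
$$f_n^{(k)}(K_1,\dots,K_n)=\frac{(-1)^n}{n!\sqrt{V_d(W)}}\,\psi_k^*(K_1\cap\cdots\cap K_n\cap W),$$
and inequality \eqref{est3a} gives the pointwise bound
$$|f_n^{(k)}(K_1,\dots,K_n)|\le \frac{\beta(\psi_k)}{n!\sqrt{V_d(W)}}\sum_{r=0}^d V_r(K_1\cap\cdots\cap K_n\cap W).$$
The covariance-discrepancy term in the bound of Theorem \ref{thm:abstract} is exactly the content of Theorem \ref{thm:variance}: under \eqref{vi2} it vanishes as $r(W)\to\infty$, and under \eqref{eqn:moment3plus} (which implies \eqref{vi3}) it is $O(r(W)^{-1})$.

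The remaining work is to verify \eqref{eqn:assumptionintegrability} and \eqref{eqn:assumptionbounds} for the tensor integrals
$$I_\sigma^{(k,l)}(i,j):=\int\bigl|(f_i^{(k)}\otimes f_i^{(k)}\otimes f_j^{(l)}\otimes f_j^{(l)})_\sigma\bigr|\,d\Lambda^{|\sigma|},\qquad \sigma\in\widetilde{\Pi}_{ij},$$
uniformly in $W$ with $r(W)\ge 1$. Inserting the pointwise bound above reduces this to estimating integrals of products of four intrinsic volumes of intersections of grains with $W$, against a product built from $\Lambda$. The crucial point is condition (iii) in the definition of $\widetilde{\Pi}_{ij}$: it forces the four factors to be \emph{connected} through shared integration variables. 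Combined with the translation invariance of $\Lambda$, this connectivity lets us anchor one grain, so that only a single translation integral effectively ranges over $W$ (contributing a factor $V_d(W)$), while the remaining translations over $\R^d$ are estimated by iterating Lemma \ref{lem:translativeIntegral} and Lemma \ref{lem:kinematic}. The combinatorial factor $1/((i!)^2(j!)^2)$ in \eqref{eqn:assumptionbounds} is precisely what comes out of the kernel normalisation, and $b$ is controlled by mixed moments of the intrinsic volumes of the typical grain.

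Under the third-moment assumption, the resulting bound has the form $a=O(V_d(W)^{-1})$ and $b$ constant, so the Malliavin-Stein contribution in Theorem \ref{thm:abstract} is $O(\sqrt{a})=O(V_d(W)^{-1/2})=O(r(W)^{-d/2})$. For $\varepsilon<1$ in \eqref{eqn:moment3plus}, I would use H\"older's inequality to interpolate between the $L^{3+\varepsilon}$-control of intrinsic volumes of $Z_0$ and the trivial $L^1$-bound, losing a factor $V_d(W)^{1-\varepsilon}$ and thus yielding $a=O(V_d(W)^{-\varepsilon})$ and the Malliavin-Stein rate $O(r(W)^{-\varepsilon d/2})$. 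Combining with the covariance rate $O(r(W)^{-1})$ gives the stated $O(r(W)^{-\min\{\varepsilon d/2,\,1\}})$. Convergence in distribution under the minimal assumption \eqref{vi2} follows from the same bounds, now with $a\to 0$ as $r(W)\to\infty$ by dominated convergence (but without quantitative rate).

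The principal obstacle is the integral-geometric bookkeeping behind \eqref{eqn:assumptionbounds}: one must show that the connectivity constraint of $\widetilde{\Pi}_{ij}$ genuinely saves one full integration, so that one obtains the prefactor $V_d(W)^{-1}$ (rather than $V_d(W)^0$) relative to the kernel normalization $V_d(W)^{-2}$, and that the remaining integrations over $\Lambda$ produce only a geometric factor $b^{i+j}$ ensuring that the series $\sum_i i^{17/2}b^i/\lfloor i/14\rfloor!$ in Theorem \ref{thm:abstract} converges. A secondary difficulty is the H\"older interpolation required to extract the exponent $\varepsilon$ in the rate from only $(3+\varepsilon)$-moments, rather than the full fourth moments that would appear naturally when bounding a product of four kernels.
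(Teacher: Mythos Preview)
Your treatment of the quantitative bound \eqref{No1} under \eqref{eqn:moment3plus} matches the paper's approach closely. Two minor differences: the paper handles $\sigma\in\widetilde{\Pi}^{(1)}_{ij}$ and $\sigma\in\widetilde{\Pi}^{(2)}_{ij}$ separately (the former reduces to a three-grain ``chain'' integral, the latter to a single grain carrying a fourth power), and the $\varepsilon$-interpolation is done via the pointwise inequality $\min\{V_s(Z_0),V_s(W)\}\le V_s(Z_0)^{\varepsilon}V_s(W)^{1-\varepsilon}$ rather than H\"older. Your heuristic for both is correct.

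There is, however, a genuine gap in your argument for convergence in distribution under \eqref{vi2} alone. Your claim that ``$a\to 0$ by dominated convergence'' does not go through for the $\widetilde{\Pi}^{(2)}_{ij}$ contribution. There the block $\{1,i+1,2i+1,2i+j+1\}\in\sigma$ forces all four kernel factors to share a common grain, and after reduction the controlling quantity is
\[
\frac{a_2}{V_d(W)^2}\;\sim\;\frac{1}{V_d(W)}\,\BE\bigg[\Big(\sum_{r=0}^d \min\{V_r(Z_0),V_r(W)\}\Big)^{3}\sum_{s=0}^d V_s(Z_0)\bigg].
\]
The integrand tends to zero pointwise in $K$ as $r(W)\to\infty$, but there is no $W$-independent majorant that is integrable with respect to $\BQ$ under only second moments: any such bound produces at least a third power of $\sum_r V_r(Z_0)$.

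The paper closes this gap by a truncation step that you are missing. It introduces the Boolean model $Z_W$ built from the restriction of $\eta$ to $M_W=\{K:V_j(K)\le\sqrt{V_d(W)}\text{ for all }j\}$, and proves (Lemma \ref{lem:approximation}, via a direct Fock-space estimate) that
\[
\frac{1}{V_d(W)}\,\BE\big[(\psi_k(Z\cap W)-\psi_k(Z_W\cap W))-\BE(\cdots)\big]^2\to 0.
\]
One then applies Theorem \ref{thm:abstract} to $\Psi(Z_W\cap W)$ instead. For the truncated typical grain $Z_{0,W}$ one has $V_r(Z_{0,W})\le\sqrt{V_d(W)}$, which yields the $W$-uniform majorant $(d+1)^2\big(\sum_r V_r(K)\big)^2$ for the integrand above; this \emph{is} integrable under \eqref{vi2}, and dominated convergence gives $a\to 0$. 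The triangle inequality for ${\bf d_3}$ together with the $L^2$-approximation then transfers the CLT from $Z_W$ back to $Z$.
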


\begin{remark}\label{remboundsI}
We will see in the proof of Theorem~\ref{thm:multiGeneral} that the
translation invariance of $\psi_1,\ldots,\psi_m$ is only used to ensure
the existence of an asymptotic covariance matrix. Hence, such a
multivariate central limit theorem still holds for functionals $\psi
_1,\ldots,\psi_m$ which are not translation invariant if we can
establish the existence of an asymptotic covariance matrix. In this
case, the rate of convergence depends on the rate of convergence for
the covariances.
\end{remark}

In the univariate case, we can rescale by the square root of the
variance, whence the existence of the asymptotic variance is not
necessary. Thus, translation invariance of the functional is not
required. We only need to assume that the variance does not degenerate
as $r(W)\to\infty$, which, for instance, holds under the conditions of
Section~\ref{secpos}.

\begin{theorem}\label{thm:univariat}
Assume that \eqref{vi2} is satisfied and let $\psi$ be an additive,
locally bounded and measurable functional on
$\cR^d$ with constants $r_0\geq1$ and $\sigma_0>0$ such that
%
\begin{equation}
\label{eqn:assumptionVariance} \frac{\V\psi(Z\cap W)}{V_d(W)} \geq\sigma_0
\end{equation}
for $W\in\cK^d$ with $r(W)\geq r_0$. Denote by $N$ a standard Gaussian
random variable. Then
\[
\frac{\psi(Z\cap W)-\E\psi(Z\cap W)}{\sqrt{\V\psi(Z\cap W)}}
 \stackrel {d} {\longrightarrow} N \qquad\mbox{as } r(W)\to\infty.
\]
If \eqref{eqn:moment3plus} is satisfied, there is a constant $c_\psi$
depending on $\psi,\Lambda,\sigma_0,r_0$, and $\varepsilon$
such that
%
\begin{equation}
\label{No2} \mathbf{d}_W \biggl(\frac{\psi(Z\cap W)-\E\psi(Z\cap W)}{\sqrt{\V
\psi(Z\cap
W)}},N \biggr)
\le\frac{c_{\psi}}{V_d(W)^{\min\{\varepsilon/2,1/2\}}}
\end{equation}
for $W\in\cK^d$ with $r(W)\ge r_0$.
\end{theorem}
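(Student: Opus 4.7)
The plan is to apply Corollary \ref{corol:univariat} to the Poisson functional $F=f_{\psi,W}(\eta)=\psi(Z\cap W)$. By \eqref{smfinite}, $F\in L^2_\eta$, and by \eqref{Tn} its Wiener--It\^o chaos kernels are $f_n(K_1,\ldots,K_n)=\frac{(-1)^n}{n!}\psi^*(K_1\cap\cdots\cap K_n\cap W)$. The pointwise estimate \eqref{est3a} in Lemma \ref{lem:boundintrinsic} then gives
\[
|f_n(K_1,\ldots,K_n)|\le \frac{\beta(\psi)}{n!}\sum_{l=0}^d V_l(K_1\cap\cdots\cap K_n\cap W),
\]
and Lemma \ref{lem:kinematic} is the tool for controlling iterated integrals of such expressions. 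Together these ensure that $F\in\dom D$ and that the integrability condition \eqref{eqn:assumptionintegrability} of Corollary \ref{corol:univariat} holds.

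The main work is to verify the quantitative bound \eqref{eqn:assumptionbounds} with $a$ proportional to $V_d(W)$. Fix $\sigma\in\widetilde{\Pi}_{ij}$. After the contraction induced by $\sigma$, the function $|(f_i\otimes f_i\otimes f_j\otimes f_j)_\sigma|$ is dominated by $\beta(\psi)^4/((i!)^2(j!)^2)$ times a product of four intrinsic volume factors, one per group $J_1,\ldots,J_4$, each evaluated on the intersection (with $W$) of the grains in the corresponding group, with the grain variables identified according to $\sigma$. I would integrate the grain variables one at a time. Starting from variables that lie in just one of the four groups, exactly one translation integration absorbs the factor $V_d(W)$ via Lemma \ref{lem:translativeIntegral}; all further translative integrations are bounded by constants depending on intrinsic volume moments of $Z_0$, and integrations over a whole group are handled by Lemma \ref{lem:kinematic}. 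Variables shared among two, three, or four groups (corresponding to blocks of $\sigma$ of size $2$, $3$, or $4$) require, respectively, second, third, and $(3+\varepsilon)$-th moments of intrinsic volumes of $Z_0$. Keeping track of the factorials from the $1/n!$ factors and of the geometric constants, this iterative procedure yields \eqref{eqn:assumptionbounds} with $a=c_\psi V_d(W)$ and an absolute constant $b$ depending only on $d$, $\gamma$, and the moments from \eqref{eqn:moment3plus}.

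The principal obstacle is the treatment of blocks of size four: such a block places four copies of the same typical grain into the integrand, and the natural bound demands $\E V_l(Z_0)^4$, which \eqref{eqn:moment3plus} does not supply when $\varepsilon<1$. My plan is to interpolate via H\"older's inequality with exponent $(3+\varepsilon)/4$, which expresses the missing fourth moment in terms of a $(3+\varepsilon)$-th moment (finite by \eqref{eqn:moment3plus}) at the cost of an extra factor of $V_d(W)^{(1-\varepsilon)/2}$. This is precisely the loss that degrades the rate from $V_d(W)^{-1/2}$ to $V_d(W)^{-\varepsilon/2}$. Substituting into Corollary \ref{corol:univariat} and invoking the variance lower bound \eqref{eqn:assumptionVariance} gives
\[
{\bf d_W}\!\left(\frac{F-\E F}{\sqrt{\V F}},N\right)\le C\,\frac{\sqrt{a}}{\V F}\le \frac{c_\psi}{V_d(W)^{\min\{\varepsilon/2,1/2\}}},
\]
which is the Berry--Esseen bound \eqref{No2}.

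For the qualitative convergence in distribution under only \eqref{vi2}, I would proceed by truncation: replace the typical grain $Z_0$ by $Z_0^{(R)}:=Z_0\cap B^d_R$, or by restricting to the event $\{V_i(Z_0)\le R\text{ for all }i\}$, so that every intrinsic volume moment of the truncated typical grain is finite. The quantitative CLT above then applies to the Boolean model $Z^{(R)}$ generated by the truncated grains, yielding convergence in distribution of the centred and normalized truncated functional. A standard $L^2$-approximation based on \eqref{vi2} and \eqref{eqn:assumptionVariance} shows that $(\psi(Z\cap W)-\psi(Z^{(R)}\cap W))/\sqrt{V_d(W)}$ is small, uniformly in $W$ with $r(W)\ge r_0$, as $R\to\infty$. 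Passing to the limit via Slutsky's theorem concludes the qualitative CLT.
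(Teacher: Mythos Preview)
Your proposal is correct and follows the same route as the paper: apply Corollary~\ref{corol:univariat}, bound the kernels via \eqref{est3a}, control the partition integrals \eqref{eqn:assumptionbounds} using Lemmas~\ref{lem:translativeIntegral} and~\ref{lem:kinematic}, and treat the qualitative part by truncation. Two minor differences in execution are worth noting. (i) For the size-four block in $\widetilde{\Pi}^{(2)}_{ij}$, the paper does not invoke H\"older but instead reduces (by monotonicity and Lemma~\ref{lem:kinematic}) to the single integral $\int\big(\sum_r V_r(K\cap W)\big)^4\,\Lambda(dK)$ and then uses the pointwise bound $\min\{V_s(Z_0),V_s(W)\}\le V_s(Z_0)^{\varepsilon}V_s(W)^{1-\varepsilon}$; this is exactly your interpolation and produces the same extra factor $V_d(W)^{(1-\varepsilon)/2}$. (ii) For convergence in distribution under \eqref{vi2} only, the paper truncates at the $W$-dependent level $M_W=\{K:V_j(K)\le\sqrt{V_d(W)},\ j=0,\dots,d\}$ (see Lemma~\ref{lem:approximation}) rather than at a fixed level $R$, and then deduces the result from the ${\bf d_3}$-bound for the truncated model together with the scaling relation ${\bf d_3}(cA,cB)\le\max\{1,c\}^3\,{\bf d_3}(A,B)$. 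Your fixed-$R$ truncation plus Slutsky argument would also go through.
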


\begin{remark}
Together with the well-known fact that the Kolmogorov distance to a
standard Gaussian random variable is always bounded by the square
root of the Wasserstein distance to a standard Gaussian random
variable (see Proposition~1.2 in \cite{Ross2011}, e.g.), it
follows from \eqref{No2} that
\[
\sup_{x\in\R} \biggl| \P \biggl(\frac{\psi(Z\cap W)-\E\psi(Z\cap
W)}{\sqrt{\V\psi(Z\cap W)}}\leq x \biggr)-\P(N
\leq x) \biggr| \le \frac{\sqrt{c_{\psi}}}{V_d(W)^{\min\{\varepsilon/4,1/4\}}}
\]
for $W\in\cK^d$ with $r(W)\ge r_0$. However, this approach leads to a
weaker rate of convergence than for the Wasserstein distance, which
might be suboptimal since for many central limit theorems one has the
same rate for both distances.
\end{remark}

\begin{remark}\label{remboundsII}
By replacing in \eqref{No2} the volume of $W$ by the volume of its inball,
we obtain a rate of order $r(W)^{-\min\{\varepsilon d/2, d/2\}}$.
Comparing \eqref{No1} and \eqref{No2}, we see
that for $\varepsilon=1$ and $d\geq3$ the rate of convergence in the
multivariate case is weaker
than in the univariate case. This is caused by the slow rate of convergence
in Theorem~\ref{thm:variance} since we need to bound
\[
\sum_{k,l=1}^m \biggl\llvert
\sigma_{\psi_k,\psi_l} -\frac{\C(\psi_k(Z\cap W),\psi_l(Z\cap W))}{V_d(W)}\biggr\rrvert
\]
in order to apply Theorem~\ref{thm:abstract}. In the univariate
analogue, which is
Corollary~\ref{corol:univariat}, we normalize with the exact variance and
do not have such a term. If we replace the Gaussian random vector $N$
by a centered
Gaussian random vector $N(W)$, having the covariance matrix of
$V_d(W)^{-1/2}\Psi(Z\cap W)$,
the sum above vanishes and we obtain
\[
\mathbf{d}_3 \biggl(\frac{1}{\sqrt{V_d(W)}}\bigl(\Psi(Z\cap W) - \E\Psi (Z
\cap W)\bigr),N(W) \biggr)
\le\frac{c_{\psi_1,\ldots,\psi_m}}{V_d(W)^{\min\{\varepsilon
/2,1/2\}}},
\]
which is the same rate as in the univariate case.

For $k,l\in\{1,\ldots,m\}$, we obtain by choosing $g(x)=x_k x_l/2$ as a
test function in the definition of the $\mathbf{d}_3$-distance that
\begin{eqnarray*}
&&\mathbf{d}_3 \biggl(\frac{1}{\sqrt{V_d(W)}}\bigl(\Psi(Z\cap W) - \E\Psi (Z
\cap W)\bigr),N \biggr) \\
&&\qquad\geq\frac{1}{2} \biggl\llvert \sigma_{\psi_k,\psi_l}-
\frac
{\C(\psi
_k(Z\cap W),\psi_l(Z\cap W))}{V_d(W)}\biggr\rrvert .
\end{eqnarray*}
Hence, Proposition~\ref{prop:optimalrate} shows that the rate in
\eqref
{No1} is optimal for $\varepsilon=1$ and $d\geq2$.
\end{remark}

We organize the proofs of Theorems \ref{thm:multiGeneral} and
\ref{thm:univariat} such that we
first impose the moment assumption \eqref{eqn:moment3plus} and
establish \eqref{No1} and \eqref{No2}.
In a second step, we prove that convergence in distribution is still
obtained (without convergence rates)
under the weaker moment assumption \eqref{vi2}.

\begin{pf*}{Proof of \eqref{No1} in Theorem~\ref{thm:multiGeneral} under
assumption \eqref{eqn:moment3plus}}
From now on, we write
\[
f_i^{(k)}(K_1,\ldots,K_i):=
\frac{(-1)^i}{i!}\psi^*_k(K_1\cap\cdots \cap
K_i\cap W)
\]
for $K_1,\ldots,K_i\in\cK^d$, $1\leq k \leq m$, and $i\geq1$.
It is a direct consequence of \eqref{Tn} that $f_i^{(k)}$
is the $i$th kernel of the Wiener--It\^o chaos expansion of
the Poisson functional $\psi_k(Z\cap W)$.

The integrability condition \eqref{eqn:assumptionintegrability} is
satisfied since the kernels are bounded by~\eqref{est3a} for every
$W\in
\cK^d$ and the
measure of the grains hitting $W$ is also finite.

In the sequel, we check assumption \eqref{eqn:assumptionbounds} for the
cases $\sigma\in\widetilde{\Pi}^{(1)}_{ij}$ and $\sigma\in
\widetilde{\Pi
}^{(2)}_{ij}$ separately.
We start with the first case. Let $k,l\in\{1,\ldots,m\}$ and $\sigma
\in
\widetilde{\Pi}^{(1)}_{ij}$.
From~\eqref{est3a} in Lemma~\ref{lem:boundintrinsic}, it follows that
\begin{eqnarray*}
&& \int\bigl|\bigl(f_i^{(k)}\otimes f_i^{(k)}
\otimes f_j^{(l)} \otimes f_j^{(l)}
\bigr)_\sigma\bigr| \,d\Lambda^{|\sigma|}
\\
& &\qquad\le\frac{(\beta(\psi_k) \beta(\psi_l))^2}{(i!)^2(j!)^2} \int \prod_{p=1}^4
\sum_{r=0}^d V_r \biggl(
\bigcap_{n\in N_p(\sigma)} K_n\cap W \biggr)
\Lambda^{|\sigma|} \bigl(d(K_1,\ldots,K_{|\sigma|})\bigr)
\end{eqnarray*}
with nonempty sets $N_p(\sigma)\subset\{1,\ldots,|\sigma|\}$,
$p=1,\ldots,4$,
depending on $\sigma$. Every $n \in\{1,\ldots,|\sigma|\}$ is
contained in
at least two of these sets. By removing the index $n$ from the sets until
it occurs only in one set, we increase the integral and can use Lemma~\ref{lem:kinematic} to integrate over $K_n$. Due to the special
structure of $\sigma\in\widetilde{\Pi}^{(1)}_{ij}$, we obtain by
iterating this step and using the abbreviation
\[
h_{W}(A)=\sum_{r=0}^d
V_r(A\cap W)
\]
that
\begin{eqnarray*}
&& \int\bigl|\bigl(f_i^{(k)}\otimes f_i^{(k)}
\otimes f_j^{(l)} \otimes f_j^{(l)}
\bigr)_\sigma\bigr| \,d\Lambda^{|\sigma|}
\\
&&\qquad \leq\frac{(\beta(\psi_k) \beta(\psi_l))^2}{(i!)^2(j!)^2}
\alpha ^{|\sigma|-3}\\
&&\hspace*{31pt}{}\times \int h_W(K_1)
h_W(K_1\cap K_2)h_W(K_2
\cap K_3) h_W(K_3) \Lambda ^3
\bigl(d(K_1,K_2,K_3)\bigr)
\\
&&\qquad = \frac{(\beta(\psi_k) \beta(\psi_l))^2}{(i!)^2(j!)^2} \alpha ^{|\sigma|-3} \int \biggl( \int
h_W(K_1) h_W(K_1\cap
K_2) \Lambda(dK_1) \biggr)^2
\Lambda(dK_2).
\end{eqnarray*}
For a fixed $K_2\in\cK^d$, Lemma~\ref{lem:translativeIntegral} implies
the second inequality in
\begin{eqnarray*}
&&\int h_W(K_1) h_W(K_1\cap
K_2) \Lambda(dK_1) \\
&&\qquad \leq\gamma\E \Biggl[ \sum
_{r=0}^d V_r(Z_0) \int\sum
_{s=0}^d V_s
\bigl((Z_0+x)\cap K_2\cap W\bigr) \,dx \Biggr]
\\
&&\qquad \leq(d+1)\gamma\beta_1 \E \Biggl[ \Biggl(\sum
_{r=0}^d V_r(Z_0)
\Biggr)^2 \Biggr] \sum_{s=0}^d
V_s(K_2\cap W).
\end{eqnarray*}
Putting $c_7:=(d+1)\gamma\beta_1 \E [ (\sum_{r=0}^d
V_r(Z_0)
)^2 ]$
and applying Lemma~\ref{lem:translativeIntegral} again, we get
\begin{eqnarray*}
&&\int \biggl( \int h_W(K_1)  h_W(K_1
\cap K_2)\Lambda(dK_1) \biggr)^2
\Lambda(dK_2)
\\
& &\qquad\leq c_7^2 \int \Biggl(\sum
_{r=0}^d V_r(K_2\cap W)
\Biggr)^2 \Lambda (dK_2)
\\
&&\qquad \leq\gamma c_7^2 \E \Biggl[ \sum
_{r=0}^d V_r(Z_0) \int\sum
_{s=0}^d V_s
\bigl((Z_0+x)\cap W\bigr) \,dx \Biggr]
\\
& &\qquad\leq(d+1)\gamma\beta_1 c_7^2 \E \Biggl[
\Biggl( \sum_{r=0}^d V_r(Z_0)
\Biggr)^2 \Biggr] \sum_{s=0}^d
V_s(W) = c_8 \sum_{r=0}^d
V_r(W)
\end{eqnarray*}
with $c_8:=c_7^3$. Finally, since $|\sigma|\le i+j$ for
$\sigma\in\widetilde{\Pi}_{ij}$ we have
%
\begin{eqnarray}
\label{eqn:condition1} \int\bigl|\bigl(f_i^{(k)}\otimes
f_i^{(k)} \otimes f_j^{(l)} \otimes
f_j^{(l)}\bigr)_\sigma\bigr| \,d\Lambda^{|\sigma|}
&\leq&\frac{c_8 (\beta(\psi_k) \beta(\psi_l))^2
}{(i!)^2(j!)^2} \alpha^{|\sigma|-3} \sum_{r=0}^d
V_r(W)
\nonumber
\\[-8pt]
\\[-8pt]
\nonumber
&\leq&\frac{a_1
b_1^{i+j}}{(i!)^2(j!)^2}
\end{eqnarray}
with $a_1:=\max_{1\le k,l\leq m} \alpha^{-3}c_8 (\beta(\psi_k)
\beta(\psi_l))^2
\sum_{r=0}^d V_r(W)$ and $b_1:=\max\{\alpha,1\}$.
It follows from Lemma~\ref{lem:BoundInradius} that there is a
constant $c_9$ depending on $\psi_1,\ldots,\psi_m$ and $\Lambda$
such that
%
\begin{equation}
\label{eqn:bounda1} \frac{a_1}{V_d(W)^2} \leq\frac{c_9}{V_d(W)}
\end{equation}
for $W\in\cK^d$ with $r(W)\geq1$.

For $\sigma\in\widetilde{\Pi}_{i,j}^{(2)}$, we obtain from \eqref
{est3a} in Lemmas \ref{lem:boundintrinsic}
and \ref{lem:kinematic} as above that
\begin{eqnarray*}
&&\int\bigl|\bigl(f_i^{(k)}\otimes f_i^{(k)}
\otimes f_j^{(l)}\otimes f_j^{(l)}
\bigr)_\sigma\bigr| \,d\Lambda^{|\sigma|} \\
&&\qquad\le\frac{(\beta(\psi_k) \beta(\psi_l))^2}{(i!)^2(j!)^2} \alpha
^{|\sigma|-1} \int \Biggl(\sum_{r=0}^d
V_r(K_1\cap W) \Biggr)^4
\Lambda(dK_1).
\end{eqnarray*}
A further application of Lemma~\ref{lem:translativeIntegral} yields the
second inequality in
\begin{eqnarray*}
&& \int \Biggl(\sum_{r=0}^d
V_r(K_1\cap W) \Biggr)^4
\Lambda(dK_1)
\\
&&\qquad \leq\gamma\BE \Biggl[ \Biggl(\sum_{r=0}^d
\min\bigl\{V_r(Z_0),V_r(W)\bigr\}
\Biggr)^3 \int\sum_{s=0}^d
V_s\bigl((Z_0+x)\cap W\bigr) \,dx \Biggr]
\\
&&\qquad \leq(d+1)\gamma\beta_1 \BE \Biggl[ \Biggl(\sum
_{r=0}^d \min\bigl\{ V_r(Z_0),V_r(W)
\bigr\} \Biggr)^3 \sum_{s=0}^d
V_s(Z_0) \Biggr] \sum_{u=0}^d
V_u(W).
\end{eqnarray*}
Consequently, we have
%
\begin{equation}
\label{eqn:condition2} \int\bigl|\bigl(f_i^{(k)}\otimes
f_i^{(k)}\otimes f_j^{(l)}\otimes
f_j^{(l)}\bigr)_\sigma\bigr| \,d\Lambda^{|\sigma|}
\leq\frac{a_2 b_2^{i+j}}{(i!)^2(j!)^2}
\end{equation}
with
\begin{eqnarray*}
a_2&=&(d+1)\gamma\beta_1\max_{1\le k,l\leq m}
\frac{(\beta(\psi_k)
\beta
(\psi_l))^2}{\alpha} \\
&&{}\times\BE \Biggl[ \Biggl(\sum_{r=0}^d
\min\bigl\{ V_r(Z_0),V_r(W)\bigr\}
\Biggr)^3 \sum_{s=0}^d
V_s(Z_0) \Biggr] \sum_{u=0}^d
V_u(W)
\end{eqnarray*}
and $b_2=\max\{\alpha,1\}$. Since
\begin{eqnarray*}
&& \frac{1}{V_d(W)^2}\BE \Biggl[ \Biggl(\sum_{r=0}^d
\min\bigl\{ V_r(Z_0),V_r(W)\bigr\}
\Biggr)^3 \sum_{s=0}^d
V_s(Z_0) \Biggr] \sum_{u=0}^d
V_u(W)
\\
&&\qquad \leq\BE \Biggl[ \Biggl(\sum_{r=0}^d
V_r(Z_0) \Biggr)^3 \sum
_{s=0}^d \frac
{V_s(Z_0)^{\varepsilon} V_s(W)^{1-\varepsilon}\}}{V_d(W)} \Biggr] \sum
_{u=0}^d \frac{V_u(W)}{V_d(W)},
\end{eqnarray*}
Lemma~\ref{lem:BoundInradius} and the moment assumption \eqref
{eqn:moment3plus} imply that there is
a constant $c_{10}$ depending on $\psi_1,\ldots,\psi_m$, $\Lambda$, and
$\varepsilon$ such that, for $W\in\cK^d$ with $r(W)\geq1$,
%
\begin{equation}
\label{eqn:bounda2} \frac{a_2}{V_d(W)^2} \leq\frac{c_{10}}{V_d(W)^\varepsilon}.
\end{equation}

If we rescale the Poisson functionals by $V_d(W)^{-{1}/{2}}$,
\eqref
{eqn:condition1} and \eqref{eqn:condition2} imply that~\eqref
{eqn:assumptionbounds} holds with $a=\max\{a_1,a_2\}V_d(W)^{-2}$ and
$b=\max\{b_1,b_2\}$. By \eqref{eqn:bounda1} and~\eqref{eqn:bounda2},
$a$ is of the order $V_d(W)^{-\min\{1,\varepsilon\}}$. Now \eqref{No1}
is a consequence of Theorems~\ref{thm:variance} and \ref{thm:abstract}.
\end{pf*}

\begin{pf*}{Proof of \eqref{No2} in Theorem~\ref{thm:univariat} under
assumption \eqref{eqn:moment3plus}}
By the same arguments as in the previous proof and analogous choices
for $a_1,a_2$, the conditions of Corollary~\ref{corol:univariat} are
satisfied with $a=\max\{a_1,a_2\}$. It follows from assumption~\eqref
{eqn:assumptionVariance} that
\[
\frac{a}{(\V\psi(Z\cap W))^2}=\frac{\max\{a_1,a_2\}}{(\V\psi
(Z\cap
W))^2} \leq\frac{1}{\sigma_0^2}
\frac{\max\{a_1,a_2\}}{V_d(W)^2}
\]
for $W\in\cK^d$ with $r(W)\geq r_0$. Combining this with \eqref
{eqn:bounda1} and \eqref{eqn:bounda2} completes the proof.
\end{pf*}

For $W\in\cK^d$ with $r(W)>0$, we define the set
\[
M_W= \bigl\{K\in\cK^d\dvtx V_j(K)\leq
\sqrt{V_d(W)}, j\in\{0,\ldots ,d\} \bigr\}.
\]
The restriction of $\eta$ to $M_W$ is a stationary Poisson process,
which generates the stationary Boolean model
\[
Z_W=\bigcup_{K\in\eta\cap M_W}K.
\]
The idea of the proofs of Theorems \ref{thm:multiGeneral} and
\ref{thm:univariat}
under the weaker moment assumption \eqref{vi2} is to approximate the Boolean
model $Z$ by the Boolean model $Z_W$.
A~similar truncation has been used in \cite{HeiSp13}
to prove the central limit theorem for the volume of
a more general Boolean model based on a Poisson process
of cylinders.

The restriction of $\eta$ to $M_W$ has the intensity $\gamma
_W:=\gamma
\P(Z_0\in M_W)$ [note that $\gamma_W>0$ for $r(W)$ sufficiently large]
and its typical grain $Z_{0,W}$ has the distribution $\P(Z_{0,W}\in
\cdot):=\P(Z_0\in\cdot\cap M_W)/\P(Z_0\in M_W)$. For the Boolean model
$Z_W$, obviously all previous results hold if we replace $Z_0$ and
$\gamma$ by $Z_{0,W}$ and $\gamma_W$. But Lemmas \ref
{lem:boundintrinsic} and \ref{lem:kinematic} as well as the upper
bounds in the proof of Theorem~\ref{thm:multiGeneral} under the
stronger assumption \eqref{eqn:moment3plus} remain true for $Z_W$ if we
take the same constants as for $Z$, which we do in the sequel. The
reason for this is that the constants do only depend on the product of
intensity and grain distribution and that the intrinsic volumes are monotone.

The Boolean models $Z_W$ and $Z$ satisfy the following relation.

\begin{lemma}\label{lem:approximation}
Let $\psi$ be an additive, locally bounded and measurable functional on
$\cR^d$ and assume that \eqref{vi2} is satisfied. Then
\[
\lim_{r(W)\to\infty}\frac{\BE ( (\psi(Z\cap W)-\BE\psi
(Z\cap
W) ) -  (\psi(Z_W\cap W)-\BE\psi(Z_W\cap W) )  )^2}{V_d(W)}=0
\]
and
\begin{eqnarray*}
&&\limsup_{r(W)\to\infty}\frac{\BE (\psi(Z\cap W)-\BE\psi
(Z\cap
W) )^2 +\BE (\psi(Z_W\cap W)-\BE\psi(Z_W\cap W)
)^2}{V_d(W)}\\
&&\qquad<\infty.
\end{eqnarray*}
\end{lemma}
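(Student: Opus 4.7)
The proof plan is to use the Fock space representation of Poisson functionals for $G_W := \psi(Z\cap W) - \psi(Z_W\cap W)$. Decompose $\eta = \eta_W + \eta^c_W$ with $\eta_W := \eta|_{M_W}$ and $\eta^c_W := \eta|_{M_W^c}$ (independent Poisson processes), set $Z'_W := Z(\eta^c_W)$, so that $Z = Z_W \cup Z'_W$ with $Z_W, Z'_W$ independent. Since adding a particle $K$ to $\eta$ affects $\eta_W$ only when $K\in M_W$, iteration of \eqref{addone} together with Lemma \ref{lem:boundintrinsic} yields
\begin{equation*}
\BE D^n_{K_1,\ldots,K_n}G_W(\eta) = (-1)^n\Bigl[\psi^*(A_n) - \prod_{i=1}^n \I_{M_W}(K_i)\,\psi^*_W(A_n)\Bigr],
\end{equation*}
where $A_n := K_1\cap\cdots\cap K_n\cap W$ and $\psi^*_W(K):=\BE\psi(Z_W\cap K)-\psi(K)$. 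Splitting the Fock sum \eqref{2.8} applied to $G_W$ according to whether at least one $K_i\notin M_W$ or all $K_i\in M_W$ yields $\V G_W = S_1+S_2$, with integrands $\psi^*(A_n)^2$ and $[\psi^*(A_n)-\psi^*_W(A_n)]^2$ respectively.

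For $S_1$, use $1-\prod_i\I_{M_W}(K_i)\le\sum_i\I_{M_W^c}(K_i)$ and the symmetry of the integrand to reduce to $n\int\I_{M_W^c}(K_1)\psi^*(A_n)^2\,\Lambda^n$. Combining $|\psi^*(A_n)|\le\beta(\psi)\sum_i V_i(A_n)$ from \eqref{est3a} with Lemmas \ref{lem:translativeIntegral}, \ref{lem:kinematic}, and \ref{lem:BoundInradius} along the lines of the proof of Theorem \ref{thm:variance}, one obtains $S_1/V_d(W) \le C\int \I_{M_W^c}(K)\bigl(\sum_i V_i(K)\bigr)^2\,\Q(dK)$ for $r(W)\ge 1$. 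By dominated convergence (using \eqref{vi2} and the fact that $\I_{M_W^c}(K)\to 0$ pointwise as $r(W)\to\infty$), this tends to $0$.

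The crux of the argument is to estimate $R_W(A) := \psi^*(A)-\psi^*_W(A)$ for $A\in\cK^d$ with $A\subset W$. Additivity gives $\psi(Z\cap A) - \psi(Z_W\cap A) = \psi(Z'_W\cap A) - \psi(Z_W\cap Z'_W\cap A)$. Taking the conditional expectation over $Z_W$ (using its independence of $Z'_W$), expanding $Z'_W\cap A = \bigcup_{K\in\eta^c_W}(K\cap A)$ by the inclusion-exclusion principle for additive functionals, and then applying the multivariate Mecke formula to $\eta^c_W$ (with intensity measure $\Lambda|_{M_W^c}$) yields the representation
\begin{equation*}
R_W(A) = \sum_{n\ge 1}\frac{(-1)^n}{n!}\int \psi^*_W(K_1\cap\cdots\cap K_n\cap A)\,(\Lambda|_{M_W^c})^n(d(K_1,\ldots,K_n)).
\end{equation*}
Bounding $|\psi^*_W|$ termwise via \eqref{est3a} and invoking the version of Lemma \ref{lem:kinematic} for $\Lambda|_{M_W^c}$ (with the corresponding constant $\alpha^c_W := \gamma(d+1)\beta_1\int \I_{M_W^c}(K)\sum_i V_i(K)\,\Q(dK)$) gives $|R_W(A)|\le \beta(\psi)(e^{\alpha^c_W}-1)\sum_i V_i(A)$. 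Substituting this into $S_2$ and controlling the remaining integral $\sum_n \frac{1}{n!}\int(\sum_i V_i(A_n))^2\Lambda^n$ via the estimates used for \eqref{eq:majorant} in the proof of Theorem \ref{thm:variance} gives $S_2/V_d(W)\le C(e^{\alpha^c_W}-1)^2$. Since \eqref{vi2} and dominated convergence imply $\alpha^c_W\to 0$, this vanishes in the limit.

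The second assertion follows from the same Fock-space bound used in the proof of Theorem \ref{thm:variance}: estimating $|\psi^*(A_n)|$ (respectively $|\psi^*_W(A_n)|$) by $\beta(\psi)\sum_i V_i(A_n)$ and applying Lemmas \ref{lem:translativeIntegral}, \ref{lem:kinematic}, and \ref{lem:BoundInradius} shows that both $\V\psi(Z\cap W)/V_d(W)$ and $\V\psi(Z_W\cap W)/V_d(W)$ are bounded by a constant independent of $W$ for $r(W)\ge 1$ (for $Z_W$ one uses $\Lambda|_{M_W}\le\Lambda$, so the same constants work). The main technical obstacle is the Mecke-type expansion of $R_W(A)$: the trivial bound $|R_W(A)|\le 2\beta(\psi)\sum_i V_i(A)$ gives no decay and would only yield $S_2/V_d(W)=O(1)$; the extra factor $e^{\alpha^c_W}-1$ provided by the expansion is exactly what quantifies how effectively $Z_W$ approximates $Z$ as the contribution of the "big" grains becomes negligible.
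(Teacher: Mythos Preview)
Your proof is correct and shares the overall Fock space framework with the paper, but the treatment of the crucial term $R_W(A)=\psi^*(A)-\psi^*_W(A)$ is genuinely different. The paper bounds this difference by working directly with the random set: it writes $Z\cap K_1\cap W=(Z_W\cup Z_1\cup\cdots\cup Z_{N})\cap K_1\cap W$, where $Z_1,\dots,Z_N$ are the large grains hitting $K_1\cap W$, applies inclusion--exclusion, and obtains a bound carrying the factor $\exp(p_W(K_1))-1$ with $p_W(K_1)=\Lambda(\{K\notin M_W:K\cap K_1\neq\emptyset\})$. Since this factor depends on $K_1$, the paper then needs a second layer of dominated convergence (in $K_1$) on top of the summation over $n$. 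Your Mecke-type expansion of $R_W(A)$ in terms of $\psi^*_W$ and $\Lambda|_{M_W^c}$ instead produces the \emph{uniform} factor $e^{\alpha^c_W}-1$, independent of $A$ and of the particular grains $K_1,\dots,K_n$; this lets you pull the small factor out of the entire Fock sum and avoids the nested dominated convergence. The price you pay is that you must extend $\psi^*_W$ additively to $\cR^d$ and justify the interchange of expectation and the inclusion--exclusion sum (both routine, as you note), whereas the paper stays on $\cK^d$ throughout. For the $S_1$-part and the second assertion your argument coincides with the paper's.
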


\begin{pf}
Define, for $K_1,\ldots,K_n\in\cK^d$,
\begin{eqnarray*}
&&g_{n,W}(K_1,\ldots,K_n)\\
&&\qquad : =
\frac{(-1)^n}{n! \sqrt{V_d(W)}} \bigl(\BE \psi(Z\cap K_1\cap\cdots\cap
K_n\cap W)-\psi(K_1\cap\cdots\cap K_n\cap W)
\bigr).
\end{eqnarray*}
Further, we define
\begin{eqnarray*}
&&h_{n,W}(K_1,\ldots,K_n) \\
&&\qquad:= \frac{(-1)^n}{n!\sqrt{V_d(W)}}
\bigl(\BE \psi (Z_W\cap K_1\cap\cdots\cap
K_n\cap W)- \psi(K_1\cap\cdots\cap K_n\cap W)
\bigr)
\end{eqnarray*}
for $K_1,\ldots,K_n\in M_W$ and $h_{n,W}(K_1,\ldots,K_n):=0$ if there
is a $j\in\{1,\ldots,n\}$ with $K_j\notin M_W$.

In view of Lemma~\ref{lem:kernels} and the Fock space representation
\eqref{2.8}, the assertions of this lemma are equivalent to
%
\begin{eqnarray}
\label{eqn:VarianceDifference} \lim_{r(W)\to\infty} \sum_{n=1}^{\infty}
n! \|g_{n,W}-h_{n,W}\| _n^2 &= &0\quad
\mbox{and}
\nonumber
\\[-8pt]
\\[-8pt]
\nonumber
 \limsup_{r(W)\to\infty} \sum_{n=1}^{\infty}
n! \bigl( \| g_{n,W}\|_n^2+\|h_{n,W}
\|_n^2 \bigr) &<&\infty,
\end{eqnarray}
which we shall prove in the following. For $n\in\N$, we have
\begin{eqnarray*}
&&\|g_{n,W}-h_{n,W}\|_n^2 \\
&&\qquad = \gamma
\iiint\bigl(g_{n,W}(K_1+x,K_2,\ldots
,K_n)\\
&&\hspace*{66pt}{}-h_{n,W}(K_1+x,K_2,
\ldots,K_n)\bigr)^2
 dx \Lambda^{n-1}\bigl(d(K_2,\ldots,K_n)\bigr)
\Q(dK_1).
\end{eqnarray*}
Our aim is to apply the dominated convergence theorem to the outer
integral. For any $K_1\in\cK^d_o$, it follows from Lemmas \ref
{lem:boundintrinsic}, \ref{lem:translativeIntegral} and
\ref
{lem:kinematic} similarly as in \eqref{eq:majorant} that
\begin{eqnarray*}
&& \iint\bigl(g_{n,W}(K_1+x,K_2,
\ldots,K_n)\\
&&\quad{}-h_{n,W}(K_1+x,K_2,
\ldots,K_n)\bigr)^2 \,dx \Lambda^{n-1}
\bigl(d(K_2,\ldots,K_n)\bigr)
\\
&&\qquad \leq2 \iint \bigl( g_{n,W}(K_1+x,K_2,\ldots
,K_n)^2+h_{n,W}(K_1+x,K_2,
\ldots,K_n)^2 \bigr) \,dx\\
&&\hspace*{55pt}{}\times \Lambda ^{n-1}
\bigl(d(K_2,\ldots,K_n)\bigr)
\\
&&\qquad \leq2 (d+1) \beta_1 \beta(\psi)^2 \Biggl(\sum
_{i=0}^d V_i(K_1)
\Biggr)^2 \sum_{i=0}^d
\frac{V_i(W)}{V_d(W)}\frac{\alpha^{n-1}}{(n!)^2}.
\end{eqnarray*}
The right-hand side of the previous inequality is uniformly bounded for
$r(W)\geq1$ because of Lemma~\ref{lem:BoundInradius}. Moreover, the
sum over $n$ is integrable with respect to $K_1$ due to \eqref{vi2}.
Thus, limit and summation in the first sum in \eqref
{eqn:VarianceDifference} can be interchanged. By the same arguments,
the second inequality above yields the second formula in \eqref
{eqn:VarianceDifference}.

Next, we show that, for any $K_1\in\cK^d_o$,
%
\begin{eqnarray}\qquad
\label{eqn:limitdifference} &&\lim_{r(W)\to\infty}
\iint\bigl(g_{n,W}(K_1+x,K_2,
\ldots ,K_n)-h_{n,W}(K_1+x,K_2,
\ldots,K_n)\bigr)^2\,
dx
\nonumber
\\[-8pt]
\\[-8pt]
\nonumber
&&\hspace*{54pt}{}\times \Lambda^{n-1}\bigl(d(K_2,\ldots,K_n)\bigr)
=0.
\end{eqnarray}

For $K_1,\ldots,K_n \in M_W$, we have
\begin{eqnarray*}
&& g_{n,W}(K_1,\ldots,K_n)-h_{n,W}(K_1,
\ldots,K_n)
\\
&&\qquad = \frac{1}{n!}\frac{(-1)^n}{\sqrt{V_d(W)}} \BE \bigl[\psi(Z\cap K_1
\cap \cdots\cap K_n\cap W) \\
&&\hspace*{75pt}\qquad{}- \psi(Z_{W}\cap
K_1\cap\cdots\cap K_n\cap W) \bigr].
\end{eqnarray*}
Let us denote by $Z_1,\ldots,Z_{N_{K_1\cap W}}$ the grains of $\eta$
that intersect $K_1\cap W$ and are not in $M_W$. Then $N_{K_1\cap W}$
follows a Poisson distribution with mean
$
\Lambda(\{K\notin M_W\dvtx K\cap K_1\cap W\neq\varnothing\})$.
Since $Z\cap K_1\cap W= (Z_W\cup Z_1 \cup\cdots\cup Z_{N_{K_1\cap
W}})\cap K_1 \cap W$, it follows from the inclusion--exclusion formula that
\begin{eqnarray*}
&& \bigl|\psi(Z\cap K_1\cap\cdots\cap K_n\cap W) -
\psi(Z_W\cap K_1\cap \cdots \cap K_n\cap W)\bigr|
\\
&&\qquad \leq\sum_{\varnothing\neq J\subset\{1,\ldots,N_{K_1\cap W}\}} \biggl|\psi \biggl(Z_W\cap
\bigcap_{j\in J}Z_j \cap K_1
\cap\cdots\cap K_n\cap W \biggr) \biggr|
\\
&&\quad\qquad{} +\sum_{\varnothing\neq J\subset\{1,\ldots,N_{K_1\cap W}\}}\biggl |\psi \biggl(\bigcap
_{j\in J}Z_j \cap K_1\cap\cdots\cap
K_n\cap W \biggr) \biggr|.
\end{eqnarray*}
Recall the definitions of the constants $c_1,c_2$ and $c_4$ from
Section~\ref{sec:3}. Denoting by $\BP_W$ the distribution of the
restriction of $\eta$ to $M_W$, we obtain by \eqref{2.1a} and the
monotonicity of the intrinsic volumes that
\begin{eqnarray*}
&& \int \biggl|\psi \biggl(Z(\mu)\cap\bigcap_{j\in J}Z_j
\cap K_1\cap \cdots \cap K_n\cap W \biggr) \biggr|
\BP_W(d\mu)
\\
&&\qquad \leq c_1 M(\psi) \sum_{i=0}^d
V_i \biggl(\bigcap_{j\in J}Z_j
\cap K_1\cap\cdots\cap K_n\cap W \biggr) \\
&&\qquad\leq
c_1 M(\psi) \sum_{i=0}^d
V_i(K_1\cap\cdots\cap K_n\cap W).
\end{eqnarray*}
Applying \eqref{est0a} and the monotonicity of the intrinsic volumes yields
\begin{eqnarray*}
&&\biggl|\psi \biggl(\bigcap_{j\in J}Z_j \cap
K_1\cap\cdots\cap K_n\cap W \biggr)\biggr | \\[-2pt]
&&\qquad \leq
c_2c_4 M(\psi) \sum_{i=0}^d
V_i \biggl(\bigcap_{j\in
J}Z_j
\cap K_1\cap\cdots\cap K_n\cap W \biggr)
\\[-2pt]
&&\qquad \leq c_2c_4 M(\psi) \sum
_{i=0}^d V_i(K_1\cap\cdots
\cap K_n\cap W).
\end{eqnarray*}
Since the restrictions of $\eta$ to $M_W$ and to its complement are
stochastically independent, altogether we have that, for $K_1,\ldots
,K_n\in M_W$,
\begin{eqnarray*}
&& \bigl|g_{n,W}(K_1,\ldots,K_n)-
h_{n,W}(K_1,\ldots,K_n)\bigr|
\\[-2pt]
&&\qquad \leq\frac{(c_1+c_2c_4)M(\psi)}{n! \sqrt{V_d(W)}} \BE \bigl[2^{N_{K_1\cap W}}-1 \bigr] \sum
_{i=0}^d V_i(K_1\cap\cdots
\cap K_n\cap W)
\\[-2pt]
& &\qquad\leq\frac{\hat\beta(\psi)}{n!\sqrt{V_d(W)}} \bigl(\exp \bigl(p_W(K_1)
\bigr)-1 \bigr) \sum_{i=0}^d
V_i(K_1\cap\cdots\cap K_n\cap W)
\end{eqnarray*}
with $p_W(K_1)=\Lambda(\{K\notin M_W\dvtx K\cap K_1\neq\varnothing\}$ and $
\hat\beta(\psi)= (c_1+c_2c_4) M(\psi)$.

If there is a $j\in\{1,\ldots,n\}$ such that $K_j\notin M_W$, we have
$g_{n,W}-h_{n,W}=g_{n,W}$, and it follows from Lemma~\ref
{lem:boundintrinsic} that
\[
\bigl|g_{n,W}(K_1,\ldots,K_n)\bigr| \leq
\frac{\beta(\psi)}{n! \sqrt{V_d(W)}} \sum_{k=0}^d
V_k( K_1\cap\cdots\cap K_n\cap W).
\]

For a fixed $K_1\in\cK^d_o$ and $r(W)$ sufficiently large such that
$K_1\in M_W$, we have that
\begin{eqnarray*}
& &\iint\bigl(g_{n,W}(K_1+x,K_2,
\ldots,K_n)-h_{n,W}(K_1+x,K_2,
\ldots,K_n)\bigr)^2 \,dx \\[-2pt]
&&\quad{}\times\Lambda^{n-1}
\bigl(d(K_2,\ldots,K_n)\bigr)
\\[-2pt]
&&\qquad \leq\iint \Biggl(\frac{\hat\beta(\psi)^2}{(n!)^2V_d(W)} \Biggl( \sum
_{k=0}^d V_k\bigl((K_1+x)
\cap K_2 \cap\cdots\cap K_n\cap W\bigr)
\Biggr)^2 \\[-2pt]
&&\hspace*{48pt}{}\times\bigl(\exp\bigl(p_{W}(K_1)\bigr)-1
\bigr)^2
\\[-2pt]
&&\hspace*{15pt}\qquad\quad{} + \frac{\beta(\psi)^2}{(n!)^2V_d(W)} \Biggl( \sum_{k=0}^d
V_k\bigl((K_1+x)\cap K_2 \cap\cdots\cap
K_n\cap W\bigr) \Biggr)^2 \\[-2pt]
&&\hspace*{224pt}{}\times\sum
_{i=2}^n \I\{ K_i\notin
M_{W}\} \Biggr) \,dx \\[-2pt]
&&\hspace*{49pt}{}\times\Lambda^{n-1}\bigl(d(K_2,\ldots,K_n)\bigr)
\\[-2pt]
& &\qquad\leq\frac{\hat\beta(\psi)^2 (d+1)\beta_1 \alpha^{n-1}}{(n!)^2} \frac
{1}{V_d(W)} \\[-2pt]
&&\hspace*{32pt}{}\times\sum_{i=0}^d
V_i(W) \Biggl(\sum_{r=0}^d
V_r(K_1) \Biggr)^2 \bigl(\exp
\bigl(p_W(K_1)\bigr)-1 \bigr)^2
\\[-2pt]
&&\qquad\quad{}+ \frac{\beta(\psi)^2 (d+1)\beta_1 \alpha^{n-2}}{(n!)^2} \frac
{1}{V_d(W)}\\[-2pt]
&&\quad\hspace*{32pt}{}\times \sum_{i=0}^d
V_i(W) \Biggl(\sum_{r=0}^d
V_r(K_1) \Biggr)^2 (n-1) p_W(K_1),
\end{eqnarray*}
where we have used Lemmas \ref{lem:translativeIntegral} and \ref
{lem:kinematic}.

Then Lemma~\ref{lem:BoundInradius} and $p_W(K_1)\to0$ as $r(W)\to
\infty
$ show that the right-hand side vanishes for $r(W)\to\infty$. This
proves \eqref{eqn:limitdifference} so that the dominated convergence
theorem yields the first formula in \eqref{eqn:VarianceDifference},
which completes the proof.
\end{pf}

\begin{pf*}{Proof of Theorem~\ref{thm:multiGeneral} under assumption
\eqref{vi2}}
The triangle inequality for the $\mathbf{d}_3$-distance yields
%
\begin{eqnarray}
\label{eqn:triangled3} &&\mathbf{d}_3 \biggl(\frac{1}{\sqrt{V_d(W)}}\bigl(\Psi(Z
\cap W)-\E\Psi (Z\cap W)\bigr),N \biggr)\nonumber
\\
&&\qquad \leq\mathbf{d}_3 \biggl(\frac{1}{\sqrt{V_d(W)}}\bigl(\Psi(Z\cap W)-\E
\Psi (Z\cap W)\bigr),
\nonumber
\\[-8pt]
\\[-8pt]
\nonumber
&&\hspace*{52pt}{}\frac{1}{\sqrt{V_d(W)}}\bigl(\Psi(Z_W\cap W)-\E
\Psi(Z_W\cap W)\bigr) \biggr)
\nonumber
\\
&&\qquad\quad{} +\mathbf{d}_3 \biggl(\frac{1}{\sqrt{V_d(W)}}\bigl(\Psi(Z_W
\cap W)-\E \Psi (Z_W\cap W)\bigr),N \biggr).
\nonumber
\end{eqnarray}
In the sequel, we show that both terms on the right-hand side of \eqref
{eqn:triangled3} vanish as $r(W)\to\infty$. By \cite{LPST}, Lemma~5.5,
the first expression is bounded by
\begin{eqnarray*}
&& m\bigl(\BE\bigl\|\Psi(Z\cap W)-\E\Psi(Z\cap W)\bigr\|^2/V_d(W)\\
&&\hspace*{12pt}{}+
\bigl\|\Psi (Z_W\cap W)-\E\Psi(Z_W\cap W)\bigr\|^2
/V_d(W)\bigr)^{1/2}
\\
&&\qquad{} \times\bigl(\BE\bigl\|\Psi(Z\cap W)-\BE\Psi(Z\cap W)\\
&&\hspace*{36pt}{}-
\Psi(Z_W\cap W)+
\E \Psi(Z_W\cap W)\bigr\|^2/V_d(W)\bigr)^{1/2},
\end{eqnarray*}
where $\|\cdot\|$ stands for the Euclidean norm in $\R^m$. Since, by
Lemma~\ref{lem:approximation}, the first factor is bounded and the
second factor vanishes as $r(W)\to\infty$, the first expression on the
right-hand side of \eqref{eqn:triangled3} vanishes as $r(W)\to\infty$.

By applying Theorem~\ref{thm:abstract} to the vector $\Psi(Z_W\cap W)$
of Poisson functionals depending on the restriction of $\eta$ to $M_W$,
we shall prove that
%
\begin{equation}
\label{eqn:d3ZW} \lim_{r(W)\to\infty} \mathbf{d}_3 \biggl(
\frac{1}{\sqrt
{V_d(W)}}\bigl(\Psi (Z_W\cap W)-\BE\Psi(Z_W
\cap W)\bigr),N \biggr)=0.
\end{equation}
Theorem~\ref{thm:abstract} yields this without a rate of convergence if
%
\begin{equation}
\label{eqn:covarianceZW} \lim_{r(W)\to\infty} \frac{\C(\psi_k(Z_W\cap W),\psi_l(Z_W\cap
W))}{V_d(W)}=
\sigma_{\psi_k,\psi_l}
\end{equation}
for $k,l\in\{1,\ldots,m\}$ and if \eqref{eqn:assumptionbounds} holds
with a fixed $b\geq1$ and $a\geq0$ depending on $W$ such that $a$
tends to zero as $r(W)\to\infty$.

Condition \eqref{eqn:covarianceZW} is satisfied because of
Lemma~\ref{lem:approximation} and Theorem~\ref{thm:variance}.
Inequalities \eqref{eqn:condition1} and \eqref{eqn:condition2} also
hold for the Boolean model $Z_W$ with the same $a_1,b_1,b_2$ as in the
proof of \eqref{No1} under assumption \eqref{eqn:moment3plus} and
\begin{eqnarray*}
a_2&=&c_{11}\max_{1\le k,l\leq m}
\frac{(\beta(\psi_k) \beta(\psi
_l))^2\gamma_W}{\alpha}\\
&&{}\times \BE \Biggl[ \Biggl(\sum_{r=0}^d
\min\bigl\{ V_r(Z_{0,W}),V_r(W)\bigr\}
\Biggr)^3 \sum_{s=0}^d
V_s(Z_{0,W}) \Biggr]\sum_{u=0}^d
V_u(W)
\end{eqnarray*}
with $c_{11}:=(d+1)\beta_1$. This is the case since the derivations of
\eqref{eqn:condition1} and \eqref{eqn:condition2} require only finite
second moments and we can use the constants related to $Z$ as discussed
before Lemma~\ref{lem:approximation}.
Consequently, \eqref{eqn:assumptionbounds} is satisfied with $a=\max\{
a_1,a_2\}/V_d(W)^2$ and $b=\max\{b_1,b_2\}$. Since \eqref{eqn:bounda1}
only requires that the second moments, which are contained in $c_8$,
are finite, we obtain that $a_1/V_d(W)^2$ tends to zero as $r(W)\to
\infty$. On the other hand, $\lim_{r(W)\to\infty}a_2/V_d(W)^2=0$ is
equivalent to
%
\begin{equation}\qquad
\label{eqn:limita2} \lim_{r(W)\to\infty}\gamma_W\BE \Biggl[
\frac{1}{V_d(W)} \Biggl(\sum_{r=0}^d
\min\bigl\{V_r(Z_{0,W}),V_r(W)\bigr\}
\Biggr)^3 \sum_{s=0}^d
V_s(Z_{0,W}) \Biggr]=0.\hspace*{-30pt}
\end{equation}
The expression in the limit can be rewritten as
\[
\gamma\int\frac{1}{V_d(W)} \Biggl(\sum_{r=0}^d
\min\bigl\{ V_r(K),V_r(W)\bigr\} \Biggr)^3
\sum_{s=0}^d V_s(K) \I\{K\in
M_W\} \BQ(dK).
\]
For $K\in\cK^d_o\cap M_W$, we have $V_r(K)\leq\sqrt{V_d(W)}$ for
$r\in\{0,\ldots,d\}$ and, therefore,
\begin{eqnarray*}
&&\frac{1}{V_d(W)} \Biggl(\sum_{r=0}^d
\min\bigl\{V_r(K),V_r(W)\bigr\} \Biggr)^3 \sum
_{s=0}^d V_s(K) \I\{K\in
M_W\} \\
&&\qquad\leq (d+1)^2 \Biggl(\sum
_{r=0}^d V_r(K) \Biggr)^2,
\end{eqnarray*}
which is independent of $W$ and integrable with respect to $\BQ$. For
any fixed $K\in\cK^d_o$ the left-hand side vanishes as $r(W)\to
\infty$
so that the dominated convergence theorem implies \eqref{eqn:limita2},
and hence $a$ tends to zero as $r(W)\to\infty$. Finally, Theorem~\ref
{thm:abstract} yields \eqref{eqn:d3ZW}, which completes the proof of
Theorem~\ref{thm:multiGeneral}.
\end{pf*}

\begin{remark}
As discussed in Remark~\ref{remboundsII}, \eqref{No1} still holds if we
replace the centered Gaussian random vector $N$ with the asymptotic
covariance matrix by a centered Gaussian random vector $N(W)$ with the
exact covariance matrix. This can be done even if the functionals are
not translation invariant since in this case we do not need Theorem~\ref
{thm:variance}. The second part of the proof of Theorem~\ref
{thm:multiGeneral} still holds because \eqref{eqn:covarianceZW} is not
required in this situation. This means that under condition~\eqref{vi2}
for additive, locally bounded and measurable functionals $\psi
_1,\ldots
,\psi_m$,
\[
\lim_{r(W)\to\infty} \mathbf{d}_3 \biggl(\frac{1}{\sqrt{V_d(W)}}
\bigl(\Psi (Z\cap W)-\BE\Psi(Z\cap W)\bigr), N(W) \biggr)=0.
\]
\end{remark}

\begin{pf*}{Proof of Theorem~\ref{thm:univariat} under assumption
\eqref{vi2}}
For $m=1$ and a centered Gaussian random variable $N(W)$ with variance
$\V\psi(Z\cap W)/V_d(W)$, the previous remark implies that
%
\begin{equation}
\label{eqn:limitd3univariate} \lim_{r(W)\to\infty} \mathbf{d}_3 \biggl(
\frac{\psi(Z\cap W)-\BE
\psi(Z\cap
W)}{\sqrt{V_d(W)}},N(W) \biggr)=0.
\end{equation}
It follows from the definition of the $\mathbf{d}_3$-distance that for
random vectors $Y_1,Y_2$ and any $c>0$,
\[
\mathbf{d}_3(cY_1,cY_2) \leq\max\{1,c
\}^3 \mathbf{d}_3(Y_1,Y_2).
\]
With $c_W:=\sqrt{V_d(W)}/\sqrt{\V\psi(Z\cap W)}$ and a standard
Gaussian random variable~$N$, this yields
\begin{eqnarray*}
&&\mathbf{d}_3 \biggl(\frac{\psi(Z\cap W)-\BE\psi(Z\cap W)}{\sqrt{\V
\psi
(Z\cap W)}},N \biggr) \\
&&\qquad\leq\max
\{1,c_W\}^3 \mathbf{d}_3 \biggl(
\frac
{\psi
(Z\cap W)-\BE\psi(Z\cap W)}{\sqrt{V_d(W)}},N(W) \biggr).
\end{eqnarray*}
Since $c_W$ is bounded by assumption \eqref{eqn:assumptionVariance},
\eqref{eqn:limitd3univariate} completes the proof.
\end{pf*}

\begin{remark}
In Theorem~\ref{thm:univariat}, it is possible to weaken the assumption
that the Poisson process is stationary. In the proof, we only need to
find\vadjust{\goodbreak} upper bounds for the kernels and some integrals. This is, for
instance, still possible if the intensity measure is of the form
\[
\Lambda(\cdot)=\iint\I\{K+x\in\cdot\} f(x) \,dx \BQ(dK)
\]
with a nonnegative bounded function $f\dvtx \R^d\to\R$. Now we always get
upper bounds if we replace this intensity measure by the measure in
\eqref{Lambda} with $\gamma=\sup_{x\in\R^d}|f(x)|<\infty$.

For the multivariate central limit, this argument does not work in
general since its proof makes use of Theorem~\ref{thm:variance}, which
depends on the translation invariance of the intensity measure. But if
one can prove by other methods the existence of an asymptotic
covariance matrix, it is still possible to weaken the stationarity
assumption as described above.
\end{remark}


\printaddresses
\end{document}